\numberwithin{equation}{section}
{\theoremstyle{definition}\newtheorem{definition}{Definition}[section]

\newtheorem{remark}[definition]{Remark}
\newtheorem{remarkletter}{Remark}
\newtheorem{definitionletter}[remarkletter]{Definition}
}
\newtheorem{proposition}[definition]{Proposition}
\newtheorem{lemma}[definition]{Lemma}
\newtheorem{theorem}[definition]{Theorem}
\newtheorem{theoremletter}[remarkletter]{Theorem}
\newcommand{\Inn}{\operatorname{Inn}}
\newcommand{\M}{\operatorname{M}}
\newcommand{\SL}{\operatorname{SL}}
\newcommand{\rL}{\mathord{\text{\rm L}}}
\newcommand{\id}{\mathord{\operatorname{id}}}
\newcommand{\si}{\sigma}
\newcommand{\Tr}{\operatorname{Tr}}
\newcommand{\Ker}{\operatorname{Ker}}
\newcommand{\lspan}{\operatorname{span}}
\newcommand{\ot}{\otimes}
\newcommand{\dis}{\displaystyle}
\newcommand{\Mtil}{\widetilde{M}}
\newcommand{\Om}{\Omega}
\newcommand{\otalg}{\otimes_{\text{\rm alg}}}
\newcommand{\SO}{\operatorname{SO}}
\newcommand{\SU}{\operatorname{SU}}
\newcommand{\Prob}{\operatorname{Prob}}
\newcommand{\op}{^\text{\rm op}}
\newcommand{\cb}{_\text{\rm cb}}
\newcommand{\etatil}{\widetilde{\eta}}
\newcommand{\Lambdah}{\widehat{\Lambda}}
\newcommand{\bim}[3]{\mathord{\raisebox{-0.4ex}[0ex][0ex]{\scriptsize $#1$}{#2}\hspace{-0.05ex}\raisebox{-0.4ex}[0ex][0ex]{\scriptsize $#3$}}}
\newcommand{\Z}{\mathbb{Z}}
\newcommand{\R}{\mathbb{R}}
\newcommand{\C}{\mathbb{C}}
\newcommand{\recht}{\rightarrow}
\newcommand{\al}{\alpha}
\newcommand{\cF}{\mathcal{F}}
\newcommand{\actson}{\curvearrowright}
\newcommand{\Stab}{\operatorname{Stab}}
\newcommand{\ovt}{\mathbin{\overline{\otimes}}}
\newcommand{\cM}{\mathcal{M}}
\newcommand{\cMtil}{\widetilde{\mathcal{M}}}
\newcommand{\cK}{\mathcal{K}}
\newcommand{\cH}{\mathcal{H}}
\newcommand{\ol}[1]{\overline{#1}}
\newcommand{\abs}[1]{|#1|}
\newcommand{\cA}{\mathcal{A}}
\newcommand{\cB}{\mathcal{B}}
\newcommand{\cU}{\mathcal{U}}
\newcommand{\cG}{\mathcal{G}}
\newcommand{\eps}{\varepsilon}
\newcommand{\cN}{\mathcal{N}}
\newcommand{\cS}{\mathcal{S}}
\newcommand{\dpr}{^{\prime\prime}}
\newcommand{\F}{\mathbb{F}}
\newcommand{\Ad}{\operatorname{Ad}}
\newcommand{\N}{\mathbb{N}}
\newcommand{\cZ}{\mathcal{Z}}
\newcommand{\cP}{\mathcal{P}}
\newcommand{\norm}[1]{\|#1\|}
\newcommand{\Aut}{\operatorname{Aut}}
\newcommand{\om}{\omega}
\newcommand{\T}{\mathbb{T}}
\newcommand{\cL}{\mathcal{L}}
\newcommand{\Out}{\operatorname{Out}}
\newcommand{\cV}{\mathcal{V}}
\newcommand{\thetatil}{\widetilde{\theta}}
\newcommand{\gammah}{\widehat{\gamma}}
\newcommand{\Perm}{\operatorname{Perm}}
\newcommand{\cW}{\mathcal{W}}
\newcommand{\cC}{\mathcal{C}}
\newcommand{\vphi}{\varphi}
\begin{document}

\begin{center}
{\boldmath\LARGE\bf W$^*$-superrigidity for group von Neumann algebras \vspace{0.5ex}\\ of left-right wreath products}

\bigskip


{by Mihaita Berbec\footnote{KU Leuven, Department of Mathematics, mihai.berbec@wis.kuleuven.be \\ Supported by Research
    Programme G.0639.11 of the Research Foundation -- Flanders (FWO)} and Stefaan Vaes\footnote{KU Leuven, Department of Mathematics, stefaan.vaes@wis.kuleuven.be \\ Partially supported by ERC Starting Grant VNALG-200749, Research Programme G.0639.11 of the Research Foundation -- Flanders (FWO) and KU Leuven BOF research grant OT/08/032.}}
\end{center}

\begin{abstract}\noindent
We prove that for many nonamenable groups $\Gamma$, including all hyperbolic groups and all nontrivial free products, the left-right wreath product group $\cG := (\Z/2\Z)^{(\Gamma)} \rtimes (\Gamma \times \Gamma)$ is W$^*$-superrigid. This means that the group von Neumann algebra $L \cG$ entirely remembers $\cG$. More precisely, if $L \cG$ is isomorphic with $L \Lambda$ for an arbitrary countable group $\Lambda$, then $\Lambda$ must be isomorphic with $\cG$.
\end{abstract}

\section{Introduction and statements of the main results}

Over the last years, Popa's deformation/rigidity theory lead to a lot of progress in the classification of \emph{group measure space II$_1$ factors} $L^\infty(X) \rtimes G$ associated with free, ergodic, probability measure preserving actions of countable groups (cf.\ the surveys in \cite{Po06a,Va10a,Io12a}). In comparison, our understanding of group von Neumann algebras $L G$ is much more limited. Connes' theorem of \cite{Co76} implies that all II$_1$ factors $L G$ coming from \emph{amenable} groups $G$ with infinite conjugacy classes (icc) are isomorphic. Although \emph{nonamenable} groups with nonisomorphic group II$_1$ factors were already discovered in \cite{MvN43,Sc63,McD69}, the general question on how $L G$ depends on $G$ remains largely unanswered, especially when $G$ is a ``classical group'' like $\SL(n,\Z)$ or a free group $\F_n$.

The first rigidity phenomena for group von Neumann algebras emerged in \cite{Co80a}, and in \cite{Co80b}, Connes asked whether icc property (T) groups $G$ and $\Lambda$ with isomorphic group von Neumann algebras, $L G \cong L \Lambda$, must necessarily be isomorphic groups. Although this rigidity conjecture remains wide open, deformation/rigidity theory has provided large classes $\cC$ of icc groups  such that two groups $G$ and $\Lambda$ in the class $\cC$ must be isomorphic whenever they have isomorphic group II$_1$ factors, see e.g.\ \cite{Po01,Po04,IPP05,PV06}. This is for instance the case for the class $\cC$ of all wreath product groups $\Z/2\Z \wr \Gamma$ with $\Gamma$ an icc property (T) group, see \cite{Po04}. Note however that both $G$ and $\Lambda$ are assumed to belong to the class $\cC$, so that it is not excluded that $L G \cong L H$ for a group $H$ that is nonisomorphic with $G$ and that lies outside the class $\cC$. Even more so, in the case where $G = \Z/2\Z \wr \Gamma$ and $\Gamma$ is torsion-free, a nonisomorphic $H \not\cong G$ with $LH \cong L G$ always exists by \cite[Theorem 1.2]{IPV10}.

Only in \cite{IPV10}, the first \emph{W$^*$-superrigidity} theorem for group von Neumann algebras was established: for a large class of \emph{generalized wreath product groups} $\cG = (\Z / 2\Z)^{(I)} \rtimes \Gamma$, it was shown that if $L \cG \cong L \Lambda$ for an \emph{arbitrary} group $\Lambda$, then $\Lambda$ must be isomorphic with $\cG$. Such a group $\cG$ is called W$^*$-superrigid (see Definition \ref{def.Wstar-superrigid} for the precise terminology). So $\cG$ is W$^*$-superrigid if the group von Neumann algebra $L \cG$ ``remembers'' $\cG$.

The class of groups covered by \cite{IPV10} contains all $(\Z / 2 \Z)^{(I)} \rtimes (\Gamma \wr \Z)$, where $\Gamma$ is an arbitrary nonamenable group and $I = (\Gamma \wr \Z)/\Z$. In this paper, we extend the results of \cite{IPV10} and prove W$^*$-superrigidity for the more natural \emph{left-right wreath products} $\cG = (\Z / 2\Z)^{(\Gamma)} \rtimes (\Gamma \times \Gamma)$, where the direct product $\Gamma \times \Gamma$ acts on $\Gamma$ by left-right multiplication, and where $\Gamma$ is either the free group $\F_n$ with $n \geq 2$, or any icc hyperbolic group, or any nontrivial free product $\Gamma_1 * \Gamma_2$. The precise statement is given in Theorem \ref{thm.main-intro} below.

We expect that for most nonamenable icc groups $\Gamma$, the left-right wreath product group $(\Z/2\Z)^{(\Gamma)}\rtimes (\Gamma \times \Gamma)$ is W$^*$-superrigid. As we explain in Remark \ref{rem.McDuff}, this is however not true for arbitrary nonamenable icc groups $\Gamma$.

To prove our W$^*$-superrigidity theorem, we follow the approach of \cite{IPV10}, by considering the comultiplication $\Delta : L \Lambda \recht L \Lambda \ovt L \Lambda$ that is induced by another group von Neumann algebra decomposition $L \cG = L \Lambda$ and carefully analyzing how $\Delta$ relates to the initial von Neumann algebra structure of $L \cG$. The following are the two major steps in the proof. We first use Popa's malleable deformation for Bernoulli actions (see \cite{Po03}) and his spectral gap rigidity (see \cite{Po06b}) to prove that the subalgebra $L(\Gamma \times \Gamma) \subset L \cG$ is invariant under $\Delta$, up to unitary conjugacy. We next use the recent results on normalizers of amenable subalgebras in crossed products by hyperbolic groups (see \cite{PV12}), and in crossed products by arbitrary free product groups (see \cite{Io12b}), to prove that also the subalgebra $L\bigl((\Z/2\Z)^{(\Gamma)}\bigr) \subset L \cG$ is invariant under $\Delta$, up to unitary conjugacy. Both steps together bring us to a point where the general results of \cite{IPV10} can be applied.

Contrary to the approach of \cite{IPV10}, our proof does not use the clustering techniques of \cite{Po04}, but uses the recent results of \cite{PV12,Io12b} instead. As a consequence, we can also prove W$^*$-superrigidity for a number of subgroups of generalized wreath product groups. In particular, we let $H$ be any nontrivial torsion-free abelian group and let $\Gamma$, as above, be either the free group $\F_n$ with $n \geq 2$, or any icc hyperbolic group, or any nonamenable free product $\Gamma_1 * \Gamma_2$. We define $\cH_0$ as the subgroup of $H^{(\Gamma)}$ consisting of those elements $x$ with $\sum_g x_g = 0$. Then we prove that $\cH_0 \rtimes (\Gamma \times \Gamma)$ is always W$^*$-superrigid (see Theorem \ref{thm.main-intro}).

\begin{definitionletter}\label{def.Wstar-superrigid}
A countable group $\cG$ is called W$^*$-superrigid if the following holds: if $\Lambda$ is any countable group and if $\pi : L \Lambda \to (L \cG)^r$ is a $*$-isomorphism for some $r > 0$, then $r = 1$ and there exist an isomorphism of groups $\delta : \Lambda \recht \cG$, a character $\om : \Lambda \recht \T$ and a unitary $w \in L \cG$ such that
$$\pi(v_s) = \om(s) \, w \, u_{\delta(s)} \, w^* \quad\text{for all}\;\; s \in \Lambda \; .$$
Here $(v_s)_{s \in \Lambda}$ and $(u_g)_{g \in \cG}$ denote the canonical generating unitaries of $L \Lambda$, resp.\ $L \cG$.
\end{definitionletter}

The following is our main result. The proof is given at the end of Section \ref{sec.proof-main-thm}, as a consequence of the more general Theorem \ref{thm.main}.

\begin{theoremletter}\label{thm.main-intro}
Assume that $\Gamma$ is one of the following groups:
\begin{itemize}
\item an icc hyperbolic group,
\item a finitely generated, icc, nonamenable, discrete subgroup of a connected noncompact rank one simple Lie group with finite center,
\item a free product $\Gamma_1 * \Gamma_2$ with $|\Gamma_1| \geq 2$ and $|\Gamma_2| \geq 3$.
\end{itemize}
All of the following generalized wreath product groups $\cG$ are W$^*$-superrigid in the sense of Definition \ref{def.Wstar-superrigid}~:
\begin{enumerate}
\item the group $(\Z/n\Z)^{(\Gamma)} \rtimes (\Gamma \times \Gamma)$ where $n \in \{2,3\}$,
\item the kernel of the homomorphism $H^{(\Gamma)} \rtimes (\Gamma \times \Gamma) \recht H : x g \mapsto \sum_{k \in \Gamma} x_k$, where $H$ is an arbitrary nontrivial torsion-free abelian group.
\end{enumerate}
\end{theoremletter}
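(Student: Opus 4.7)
The plan is to follow the comultiplication approach of \cite{IPV10}. Suppose $\pi : L\Lambda \to (L\cG)^r$ is a $*$-isomorphism; write $M := L\cG$, let $A$ denote the abelian base of the wreath product (so $A = \Z/n\Z$ in case (1), and $A$ is the augmentation kernel of $H^{(\Gamma)}$ in case (2)), and set $P := L(A^{(\Gamma)})$, $Q := L(\Gamma\times\Gamma)$, so that $M = P \rtimes (\Gamma\times\Gamma)$ under the generalized left-right Bernoulli action. Transporting the canonical comultiplication $v_s \mapsto v_s \otimes v_s$ on $L\Lambda$ through $\pi$ produces a unital normal $*$-homomorphism $\Delta : M^r \to M^r \ovt M^r$. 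The goal is to show that, up to unitary conjugacy, $\Delta$ respects the generalized wreath product decomposition of $M$; once this is achieved, the reconstruction machinery of \cite{IPV10} will force $r = 1$, provide an isomorphism $\Lambda \cong \cG$ as groups, and supply the character twist required by Definition \ref{def.Wstar-superrigid}.

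The first major step will be to show that, up to a unitary in $M \ovt M$, the subalgebra $\Delta(Q)$ sits inside $Q \ovt Q$. The generalized Bernoulli action of $\Gamma\times\Gamma$ on $A^{(\Gamma)}$ admits Popa's malleable $s$-deformation, which extends to the tensor product and leaves $M \ovt Q$ (respectively $Q \ovt M$) globally invariant. Because both coordinate factors $L\Gamma$ of $Q$ are nonamenable icc, $Q$ has no amenable direct summand; Popa's spectral gap argument \cite{Po06b} then yields uniform convergence of the deformation on the unit ball of $\Delta(Q)$. Combined with the mixing/transversality properties of the deformation, this will first give an embedding of $\Delta(Q)$ into $Q \ovt M$, and by symmetric reasoning also into $M \ovt Q$, together forcing $\Delta(Q) \subset Q \ovt Q$ after a unitary conjugation.

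The second step will be to show that $\Delta(P)$ is, up to unitary conjugacy, contained in $P \ovt P$. Since $P$ is abelian, spectral gap is unavailable; I would instead invoke the recent amenable absorption results: \cite{PV12} says that any diffuse amenable subalgebra of a crossed product by a hyperbolic group or a rank one lattice whose normalizer generates a nonamenable algebra is unitarily conjugate into the base, and \cite{Io12b} gives the analogous statement for crossed products by arbitrary nontrivial free products. Applied to the normalizer of $\Delta(P)$ inside $M \ovt M$ (which is controlled by Step 1, since $Q$ normalizes $P$ in $M$), these absorption theorems will place $\Delta(P)$ inside $P \ovt P$ as required.

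With both steps in place, the general reconstruction theorem from \cite{IPV10} --- a purely algebraic consequence of having a comultiplication compatible with a generalized wreath product decomposition --- will produce the group isomorphism $\delta : \Lambda \to \cG$, the character $\om : \Lambda \to \T$, and the unitary $w$ appearing in Definition \ref{def.Wstar-superrigid}. Case (2) and the $n = 3$ subcase of (1) will follow by the same argument with only cosmetic modifications, since the augmentation constraint and the choice of abelian base only affect ingredients that are invisible to the rigidity inputs. The hard part will be Step 1: the non-freeness of the left-right action of $\Gamma\times\Gamma$ on $\Gamma$ (the diagonal sits as the stabilizer) forces a delicate malleable deformation/spectral gap analysis that must handle the partial stabilizer structure carefully while still extracting a uniform estimate on $\Delta(Q)$.
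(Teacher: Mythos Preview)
Your high-level strategy matches the paper's: transport the comultiplication, conjugate $\Delta(LG)$ into $LG \ovt LG$ via spectral gap for the malleable deformation, control $\Delta$ on the abelian part using the normalizer results of \cite{PV12,Io12b}, and then reconstruct the group. You have also correctly identified that the non-freeness of the left-right action (diagonal stabilizers) is the new obstacle in Step~1; the paper handles this with a refined spectral gap theorem that only requires amenability of $\Stab\cF$ once $|\cF|$ exceeds a fixed bound.

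There are two genuine gaps. First, your setup for case~(2) is wrong: the augmentation kernel $\cH_0 = \ker\bigl(H^{(\Gamma)}\to H\bigr)$ is \emph{not} of the form $A^{(\Gamma)}$ for any group $A$, so $L\cG_0$ is not itself a generalized Bernoulli crossed product and the malleable deformation does not live on it. The paper resolves this by embedding $M_0 = L\cG_0$ into the honest Bernoulli crossed product $M = L\bigl(H^{(\Gamma)}\rtimes(\Gamma\times\Gamma)\bigr)$ and running the entire argument there; this forces one to check, repeatedly, that various objects land back in $M_0$ (e.g.\ that $\Delta(M_0)'\cap (M\ovt M)^r=\C 1$, which is nontrivial and uses that $H/H_0$ is torsion-free). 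This is not a cosmetic modification.

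Second, the final step is not a black-box application of \cite{IPV10}. After obtaining $\Delta(A_0)\subset A_0\ovt A_0$ and $u_g = v_{\rho(g)}$, \cite{IPV10} only tells you that $A_0 = L\Sigma$ for some abelian $\Sigma<\Lambda$ with $\Lambda = \Sigma\rtimes G$; it does \emph{not} identify $\Sigma$. The paper then has to prove a separate combinatorial lemma reconstructing $\Sigma$ as $p_{H'}^{-1}(H_0')$ from the equivariance of $\Delta$, invoke Popa's cocycle superrigidity to pin down the conjugating isomorphism up to a base isomorphism $\theta:\widehat{H'}\to\widehat{H}$, and finally argue that this $\theta$ is group-like. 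That last point is exactly where the restriction $n\in\{2,3\}$ in case~(1) enters: for those $n$ every measure-preserving automorphism of $\widehat{\Z/n\Z}$ is a character times a group automorphism, whereas for $n\geq 4$ this fails and the group is in fact \emph{not} W$^*$-superrigid. Your outline gives no mechanism for either the reconstruction of $\Sigma$ or the use of the constraint on $n$.
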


\begin{remarkletter}\label{rem.funny-case}
Let $\Gamma$ be a group as in Theorem \ref{thm.main-intro}. Assume moreover that $\Gamma$ has no nontrivial characters. Let $H$ be an an arbitrary nontrivial torsion-free abelian group and denote by $\cG_0$ the kernel of the homomorphism $H^{(\Gamma)} \rtimes (\Gamma \times \Gamma) \recht H$ given in Theorem \ref{thm.main-intro}. At the end of section \ref{sec.proof-main-thm}, we prove that $\cG_0$ has no characters either. So the conclusion of Theorem \ref{thm.main-intro} becomes stronger: whenever $\Lambda$ is a countable group and $\pi : L \Lambda \recht (L \cG_0)^r$ is a $*$-isomorphism, we have $r = 1$ and there exist an isomorphism of groups $\delta : \Lambda \recht \cG_0$ and a unitary $w \in L (\cG_0)$ such that $\pi(v_s) = w \, u_{\delta(s)} \, w^*$ for all $s \in \Lambda$.
\end{remarkletter}

\section{Preliminaries}

\subsection{Popa's intertwining-by-bimodules}

We recall Popa's intertwining-by-bimodules theorem. In the formulation of the theorem, we also introduce the notations $P \prec Q$ and $P \prec^f Q$ that are used throughout this article.

\begin{theorem}[{\cite[Theorem 2.1 and Corollary 2.3]{Po03}}]\label{thm.intertwining}
Let $(M,\tau)$ be a tracial von Neumann algebra. Assume that $p,q \in M$ are projections and that $P \subset pMp$ and $Q \subset qMq$ are von Neumann subalgebras with $P$ being generated by a group of unitaries $\cG \subset \cU(P)$. Then the following three statements are equivalent.
\begin{itemize}
\item There exist a nonzero partial isometry $v \in \M_{1,n}(\C) \ot p M q$, a projection $q_0 \in \M_n(\C) \ot Q$ and a normal $*$-homomorphism $\theta : P \recht q_0(\M_n(\C) \ot Q)q_0$ such that $x v = v \theta(x)$ for all $x \in P$.
\item There is no sequence of unitaries $(w_n)$ in $\cG$ satisfying
$$\|E_Q(x^* w_n y)\|_2 \recht 0 \quad\text{for all}\;\; x,y \in pMq \; .$$
\item There exists a nonzero $P$-$Q$-subbimodule of $p L^2(M) q$ that has finite right $Q$-dimension.
\end{itemize}
We write $P \prec Q$ if these equivalent conditions hold. We write $P \prec^f Q$ if $P p_0 \prec Q$ for all nonzero projections $p_0 \in P' \cap pMp$. Sometimes we write $P \prec_M Q$ to stress the ambient von Neumann algebra $M$.
\end{theorem}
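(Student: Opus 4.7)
The plan is to establish the cyclic implications $(1) \Rightarrow (3) \Rightarrow (2) \Rightarrow (1)$. The first two implications are essentially structural, while $(2) \Rightarrow (1)$ is the substantive step and relies on a convex averaging argument inside the Jones basic construction.

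For $(1) \Rightarrow (3)$, writing $v = (v_1, \dots, v_n)$ with $v_i \in pMq$, the identity $x v_i = \sum_j v_j \theta(x)_{ji}$ shows that the closed right $Q$-submodule
$$\cK := \overline{v_1 Q + \dots + v_n Q}^{\,\|\cdot\|_2} \subset p L^2(M) q$$
is left $P$-invariant and has right $Q$-dimension at most $n$, giving (3). For $(3) \Rightarrow (2)$, using a finite right $Q$-module basis $(\eta_j)_{j=1}^m$ of the bimodule $\cK$ from (3), there is a constant $C > 0$ with $\|\zeta\|_2^2 \leq C \sum_j \|E_Q(\eta_j^* \zeta)\|_2^2$ for all $\zeta \in \cK$. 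If (2) failed with some sequence $(w_n) \subset \cG$, applying this to $\zeta = w_n \eta_1 \in \cK$ for a fixed $\eta_1 \in \cK \setminus \{0\}$ would force $\|\eta_1\|_2^2 = \|w_n \eta_1\|_2^2 \recht 0$, a contradiction.

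The heart is $(2) \Rightarrow (1)$. Negating (2) produces $\delta > 0$ and $x_1, \dots, x_k, y_1, \dots, y_k \in pMq$ with $\sum_i \|E_Q(x_i^* w y_i)\|_2^2 \geq \delta$ for every $w \in \cG$. Working inside the Jones basic construction $\langle M, e_Q \rangle$ with its canonical semifinite trace $\Tr$ normalized by $\Tr(a e_Q b) = \tau(ab)$, set
$$T := \sum_i x_i e_Q x_i^*, \qquad S := \sum_i y_i e_Q y_i^*;$$
both are positive elements of $p\langle M, e_Q \rangle p$ of finite $\Tr$-trace, and the identity $e_Q z e_Q = E_Q(z) e_Q$ rewrites the hypothesis as $\Tr(T\, wSw^*) \geq \delta$ for all $w \in \cG$. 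Let $S_0$ be the unique element of minimal $\|\cdot\|_{2,\Tr}$-norm in the norm closure of $\conv\{wSw^* : w \in \cG\}$ inside $L^2(\langle M, e_Q \rangle, \Tr)$. Uniqueness of the minimizer together with the unitary invariance of $\|\cdot\|_{2,\Tr}$ forces $wS_0 w^* = S_0$ for every $w \in \cG$, hence $S_0 \in P' \cap p\langle M, e_Q \rangle p$; the pairing bound $\Tr(T S_0) \geq \delta$ combined with $\Tr(T) < \infty$ ensures $S_0 \neq 0$. A nonzero spectral projection of $S_0$ is then a $P$-central projection of finite $\Tr$-trace in $p\langle M, e_Q \rangle p$, and the standard dictionary of the Jones basic construction converts such a projection into the intertwiner $(v, q_0, \theta)$ required by (1).

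The main obstacle is precisely the nonvanishing of the minimizer $S_0$: all the quantitative information carried by the failure of (2) is packaged, via the trace identities of the basic construction, into the uniform pairing bound $\Tr(TwSw^*) \geq \delta$, and this is exactly what prevents $S_0 = 0$ after convex averaging. The subsidiary step, converting a $P$-central finite-trace projection in the basic construction into a partial-isometry intertwiner, is a well-known structural consequence of Jones's tower which I would invoke rather than reprove.
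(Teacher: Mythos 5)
The paper does not prove this theorem: it is quoted verbatim from Popa's work as \cite[Theorem 2.1 and Corollary 2.3]{Po03}, so there is no in-text proof to compare against. Your sketch reconstructs the standard argument from that source, and the outline is correct: the cyclic scheme $(1)\Rightarrow(3)\Rightarrow(2)\Rightarrow(1)$, with $(2)\Rightarrow(1)$ carried by convex averaging in $L^2(\langle M,e_Q\rangle,\Tr)$, is exactly Popa's proof.

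Two technicalities are worth flagging if you were to flesh this out. In $(3)\Rightarrow(2)$, a Pimsner--Popa orthonormal basis $(\eta_j)$ of $\cK$ gives $\|\zeta\|_2^2 = \sum_j \|E_Q(\eta_j^*\zeta)\|_2^2$ (so your constant $C$ may be taken equal to $1$), but the $\eta_j$ are vectors in $pL^2(M)q$, not operators in $pMq$, so condition (2) does not apply literally to $E_Q(\eta_j^* w_n \eta_1)$. You need a short approximation step: replace $\eta_j$ by bounded elements of $pMq$ that are $\|\cdot\|_2$-close, and observe that since the $w_n$ are unitaries, the resulting error in $\|E_Q(\eta_j^* w_n \eta_1)\|_2$ is controlled uniformly in $n$. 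In $(2)\Rightarrow(1)$, after producing the $\|\cdot\|_{2,\Tr}$-minimizer $S_0$, you should record that $S_0$ lies in $p\langle M,e_Q\rangle p$ rather than merely in the $L^2$-completion: it is the $L^2$-limit of the positive operators $wSw^*$, all of operator norm at most $\|S\|$, hence $0 \le S_0 \le \|S\|\, p$ and $S_0$ is a bounded operator in the basic construction. Only then does a nonzero spectral projection $\chi_{[\eps,\infty)}(S_0)$ give a finite-trace $P$-central projection in $p\langle M,e_Q\rangle p$, which the standard basic-construction dictionary converts into the intertwiner $(v,q_0,\theta)$ of statement (1). Also note that since $T \in L^1 \cap L^\infty \subset L^2$, the pairing $X \mapsto \Tr(TX)$ is $\|\cdot\|_{2,\Tr}$-continuous, which is what passes the lower bound $\delta$ through the convex closure to give $\Tr(TS_0) \ge \delta$.

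Neither point is a gap in the sense of a missing idea; they are precisely the technicalities a sketch may defer. The convex averaging, the trace identity $\Tr(a e_Q b) = \tau(ab)$, and the conversion of a finite-trace $P$-central projection into an intertwiner are all the expected ingredients of Popa's argument.
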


Note that when the von Neumann algebra $M$ has a nonseparable predual, then sequences have to be replaced by nets in the formulation of Theorem \ref{thm.intertwining}.

%

\begin{lemma}[{\cite[Section 2]{Va10b}}] \label{lem.full-embed}
Let $\Gamma$ be a countable group and $\Gamma \actson (B,\tau)$ a trace preserving action. Put $M = B \rtimes \Gamma$. Let $p \in M$ be a projection and $P \subset pMp$ a von Neumann subalgebra.
\begin{enumerate}[label=(\alph*)]
\item\label{full-one} Assume that $\Lambda < \Gamma$ is a subgroup. The set of projections $p_0 \in P' \cap pMp$ satisfying $P p_0 \prec^f B \rtimes \Lambda$ attains its maximum in a projection $p_1$ that belongs to the center of the normalizer of $P$ inside $pMp$. Moreover $P (p-p_1) \not\prec B \rtimes \Lambda$.
\item\label{full-two} Assume that $\Lambda_1,\Lambda_2 < \Gamma$ are subgroups with $\Lambda_2 \lhd \Gamma$ being normal. If $P \prec^f B \rtimes \Lambda_j$ for all $j \in \{1,2\}$, then $P \prec^f B \rtimes (\Lambda_1 \cap \Lambda_2)$.
\end{enumerate}
\end{lemma}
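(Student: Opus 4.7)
\emph{Overall strategy.} Both parts are pigeonhole arguments within Popa's intertwining-by-bimodules (Theorem \ref{thm.intertwining}), exploiting that $\prec$-type conditions behave well under corners, joins, and the normalizer action. For part (a) the maximum $p_1$ will be extracted as a supremum inside $P'\cap pMp$. For part (b) I would argue by contradiction, descending the $\Lambda_1$-intertwining through an embedding produced by the $\Lambda_2$-intertwining and exploiting the commuting-square structure coming from normality of $\Lambda_2$.

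\emph{Part (a).} Set
$$p_1 := \sup\bigl\{p_0 \in P' \cap pMp \text{ projection} : Pp_0 \prec^f B\rtimes\Lambda\bigr\},$$
the supremum taken inside $P'\cap pMp$, and write $\cS^f$ for this family. First, $\cS^f$ is upward directed: for $p_0, p_0' \in \cS^f$ and a nonzero projection $q \leq p_0 \vee p_0'$ in $P'\cap pMp$, either $q \wedge p_0 \neq 0$---in which case $P(q\wedge p_0) \prec B\rtimes\Lambda$ and then $Pq \prec B\rtimes\Lambda$ since any $Pq'$-subbimodule of finite right $B\rtimes\Lambda$-dimension inside $q' L^2(M)$ (with $q' \leq q$) is automatically a $Pq$-subbimodule---or else $q \wedge p_0 = 0$, in which case Kaplansky's formula in $P'\cap pMp$ produces a partial isometry $w \in P'\cap pMp$ with $w^*w = q$ and $ww^* \leq p_0'$; since $w$ commutes with $P$, the intertwining $P(ww^*) \prec B\rtimes\Lambda$ (which holds as $ww^* \leq p_0' \in \cS^f$) transports to $Pq \prec B\rtimes\Lambda$ via the bimodule map $\cK \mapsto w^*\cK$. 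A standard strong-operator approximation then places $p_1$ itself in $\cS^f$, so it is the maximum.

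For invariance of $p_1$ under the normalizer: if $u \in \cN_{pMp}(P)$ and $\cK \subset p_0 L^2(M)$ is a finite right $B\rtimes\Lambda$-dimensional $Pp_0$-$(B\rtimes\Lambda)$-subbimodule, then $u\cK \subset (up_0u^*)L^2(M)$ is a $P(up_0u^*)$-$(B\rtimes\Lambda)$-subbimodule of the same dimension, using $a(u\xi) = u(u^*au)\xi$ with $u^*au \in P$. Hence $up_0u^* \in \cS^f$ for every $p_0 \in \cS^f$, and maximality forces $up_1u^* = p_1$; combined with $p_1 \in P'\cap pMp$ this places $p_1$ in the center of $\cN_{pMp}(P)''$. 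Finally, if $P(p-p_1) \prec B\rtimes\Lambda$, then by Theorem \ref{thm.intertwining} and a standard support-projection argument (extracting the left support of the witnessing partial isometry inside $P'\cap pMp$ and cutting down to achieve the $\prec^f$-property there) one produces a nonzero projection $q \in P'\cap pMp$ with $q \leq p-p_1$ and $q \in \cS^f$; then $p_1 \vee q > p_1$ contradicts maximality.

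\emph{Part (b) and main obstacle.} Argue by contradiction: suppose $P \not\prec^f B\rtimes(\Lambda_1\cap\Lambda_2)$. By part (a) there is a nonzero $p_0 \in P'\cap pMp$ with $Pp_0 \not\prec B\rtimes(\Lambda_1\cap\Lambda_2)$. From the hypothesis, cutting by $p_0$ gives $Pp_0 \prec B\rtimes\Lambda_2$, so Theorem \ref{thm.intertwining} yields a partial isometry $v_2$ and a $*$-embedding $\theta_2 : Pp_0 \hookrightarrow v_2^*v_2(B\rtimes\Lambda_2)v_2^*v_2$ with $xv_2 = v_2\theta_2(x)$. I would then transport the other intertwining $Pp_0 \prec_M B\rtimes\Lambda_1$ through $v_2$ to conclude $\theta_2(Pp_0) \prec_{B\rtimes\Lambda_2} B\rtimes(\Lambda_1\cap\Lambda_2)$, and pull back via $v_2$ to contradict the choice of $p_0$. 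The \emph{main obstacle} is exactly this descent step, which is where normality of $\Lambda_2$ is essential: it reduces to the commuting-square identity $E_{B\rtimes\Lambda_1} E_{B\rtimes\Lambda_2} = E_{B\rtimes(\Lambda_1\cap\Lambda_2)}$, which holds because $g\Lambda_2 g^{-1} = \Lambda_2$ for every $g \in \Gamma$ forces the Jones projections onto $L^2(B\rtimes\Lambda_j)$ to commute. Without this commuting-square property, ``combining two intertwinings'' cannot generically be localized to the intersection algebra $B\rtimes(\Lambda_1\cap\Lambda_2)$.
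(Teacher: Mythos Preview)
The paper does not give its own argument here; it simply cites \cite[Proposition 2.6, Lemma 2.5]{Va10b} for part (a) and \cite[Lemmas 2.7 and 2.5]{Va10b} for part (b). Your part (a) is a reasonable reconstruction of that material: the directedness of $\cS^f$ under joins is correct (your Kaplansky step is fine, since $q\wedge p_0=0$ and $q\le p_0\vee p_0'$ give $q\precsim p_0'$ inside $P'\cap pMp$), and the normalizer-invariance and ``Moreover'' clause are handled correctly. The one place to tighten is the passage to the supremum: ``standard strong-operator approximation'' is not quite the right phrase, but the same partial-isometry trick you use for joins works here too (for nonzero $q\le p_1$ pick $\alpha$ with $qp_\alpha\ne0$ and take the polar part of $qp_\alpha$ in $P'\cap pMp$).

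For part (b), your outline---contradict, pass to $\theta_2(Pp_0)\subset M_n(\C)\ot(B\rtimes\Lambda_2)$, descend the $\Lambda_1$-intertwining, pull back---is viable and is close in spirit to \cite{Va10b}. But your diagnosis of where normality of $\Lambda_2$ enters is wrong, and this is a genuine gap. The identity $E_{B\rtimes\Lambda_1}\circ E_{B\rtimes\Lambda_2}=E_{B\rtimes(\Lambda_1\cap\Lambda_2)}$, equivalently the commutation of the Jones projections $e_{B\rtimes\Lambda_j}$ on $L^2(M)$, holds for \emph{arbitrary} subgroups $\Lambda_1,\Lambda_2<\Gamma$: on $bu_g$ both sides return $bu_g$ if $g\in\Lambda_1\cap\Lambda_2$ and $0$ otherwise. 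So this commuting-square property cannot be what distinguishes the normal case, and it is not strong enough to carry your descent step. What normality of $\Lambda_2$ actually provides is the group-theoretic fact that for all $g,h\in\Gamma$ the set $g\Lambda_1 h\cap\Lambda_2$ is either empty or a single double coset of $\Lambda_1\cap\Lambda_2$ (write $g\Lambda_1 h=\lambda_0 h^{-1}\Lambda_1 h$ for any $\lambda_0$ in the intersection, then use $h\Lambda_2 h^{-1}=\Lambda_2$). Equivalently: any subset of $\Gamma$ that is small relative to $\Lambda_1$ meets $\Lambda_2$ in a subset small relative to $\Lambda_1\cap\Lambda_2$. This is exactly what one needs to show that a subalgebra $Q\subset B\rtimes\Lambda_2$ with $Q\prec_M B\rtimes\Lambda_1$ already satisfies $Q\prec_M B\rtimes(\Lambda_1\cap\Lambda_2)$, via the ``small relative to'' characterization of $\not\prec$ from \cite{Va10b}. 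With this correction your scheme goes through; without it, the commuting square alone does not localize the intertwining to $B\rtimes(\Lambda_1\cap\Lambda_2)$.
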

\begin{proof}
The first statement follows from \cite[Proposition 2.6 and Lemma 2.5]{Va10b}, while the second statement follows from \cite[Lemmas 2.7 and 2.5]{Va10b}.
\end{proof}

We also need the following lemma.

\begin{lemma}\label{lem.embed-with-normalizer}
Let $\Gamma$ be a countable group and $\Gamma \actson (B,\tau)$ a trace preserving action. Put $M = B \rtimes \Gamma$ and let $p \in M$ be a projection. Assume that $Q \subset pMp$ is a von Neumann subalgebra that is normalized by a group of unitaries $\cG \subset \cU(pMp)$. Let $\Lambda < \Gamma$ be a subgroup.

If $Q \prec^f B$ and $\cG\dpr \prec B \rtimes \Lambda$, then $(Q \cup \cG)\dpr \prec B \rtimes \Lambda$.
\end{lemma}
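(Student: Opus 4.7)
The plan is to establish $(Q \cup \cG)\dpr \prec B \rtimes \Lambda$ by exhibiting, via the third characterization in Theorem~\ref{thm.intertwining}, a non-zero $(Q \cup \cG)\dpr$--$(B \rtimes \Lambda)$-subbimodule of $p\rL^2(M)$ with finite right $(B \rtimes \Lambda)$-dimension. The idea is simply to start from a finite right $(B \rtimes \Lambda)$-dimensional $\cG\dpr$--$(B \rtimes \Lambda)$-subbimodule provided by $\cG\dpr \prec B \rtimes \Lambda$, enlarge it on the left by $Q$, and use $Q \prec^f B$ to control the right dimension of the enlargement.

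Concretely, I pick a non-zero $\cG\dpr$--$(B \rtimes \Lambda)$-subbimodule $\cK \subset p\rL^2(M)$ with finite right $(B \rtimes \Lambda)$-dimension and set $\cK' := \overline{Q \cdot \cK}$. By construction $\cK'$ is left $Q$-invariant and right $(B \rtimes \Lambda)$-invariant. Since $\cG$ normalizes $Q$, for any $g \in \cG$, $q \in Q$ and $\xi \in \cK$ one has $g(q\xi) = (g q g^{-1})(g\xi)$ with $g q g^{-1} \in Q$ and $g\xi \in \cK$, so $\cK'$ is also left $\cG$-invariant, hence a $(Q \cup \cG)\dpr$--$(B \rtimes \Lambda)$-subbimodule of $p\rL^2(M)$.

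It then remains to bound the right $(B \rtimes \Lambda)$-dimension of $\cK'$. I choose a finite family of right $(B \rtimes \Lambda)$-generators $\xi_1, \ldots, \xi_n$ of $\cK$, so that $\cK' = \sum_{i=1}^{n} \overline{Q \xi_i \cdot (B \rtimes \Lambda)}$, which reduces the problem to showing that each summand has finite right $(B \rtimes \Lambda)$-dimension. Here the assumption $Q \prec^f B$ enters through a standard Zorn-type argument: the fullness condition $Qp_0 \prec B$ for every non-zero $p_0 \in Q' \cap pMp$ forces $p\rL^2(M)$ to decompose as a direct sum of $Q$--$B$-subbimodules of finite right $B$-dimension. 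Consequently each $\xi_i$ lies in a $Q$--$B$-subbimodule $\cL_i$ with $\dim_{-B} \cL_i < \infty$, and since any finite family of right $B$-generators of $\cL_i$ also generates $\cL_i \cdot (B \rtimes \Lambda)$ on the right over $B \rtimes \Lambda$ (using $B \subset B \rtimes \Lambda$), the subspace $\overline{Q \xi_i (B \rtimes \Lambda)} \subset \overline{\cL_i \cdot (B \rtimes \Lambda)}$ is finitely generated and has finite right $(B \rtimes \Lambda)$-dimension. Summing over $i$ bounds $\dim_{-(B \rtimes \Lambda)} \cK' < \infty$ and finishes the proof. The main technical point---and essentially the only non-routine step---is the Zorn-type decomposition of $p\rL^2(M)$ into finite right $B$-dimensional $Q$-subbimodules, which is precisely where the full strength of $\prec^f$ (as opposed to just $\prec$) gets used.
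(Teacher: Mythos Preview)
Your strategy---take a finite right $(B\rtimes\Lambda)$-dimensional $\cG''$--$(B\rtimes\Lambda)$-subbimodule $\cK$ and enlarge it to $\cK'=\overline{Q\cK}$---is natural, and your verification that $\cK'$ is a $(Q\cup\cG)''$--$(B\rtimes\Lambda)$-bimodule is correct. The gap is in the dimension bound. Even if one grants the direct-sum decomposition $p\rL^2(M)=\bigoplus_j\cH_j$ into $Q$--$B$-subbimodules of finite right $B$-dimension, it does \emph{not} follow that a given vector $\xi_i$ lies in any single $\cH_j$, nor in a finite partial sum: a general $\xi_i$ has nonzero components in infinitely many summands, and then the smallest $Q$--$B$-subbimodule containing it, namely $\overline{Q\xi_i B}$, can have infinite right $B$-dimension. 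For a concrete instance, take $Q=B$ inside $M=B\rtimes\Gamma$ with $B$ abelian and the action free (so that $Q\prec^f B$ holds trivially and the decomposition is $\bigoplus_{g\in\Gamma}L^2(B)u_g$), and let $\xi=\sum_{g}c_g u_g\in M$ with scalars $c_g$ and infinitely many $c_g\neq 0$. Because the $B$--$B$-bimodules $L^2(B)u_g$ are pairwise disjoint, one finds $\overline{B\xi B}=\bigoplus_{g:\,c_g\neq 0}L^2(B)u_g$, which has infinite right $B$-dimension. So the inference ``each $\xi_i$ lies in some $\cL_i$ with $\dim_{-B}\cL_i<\infty$'' is false in general, and without it you have no control on $\dim_{-(B\rtimes\Lambda)}\cK'$.

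The paper's proof works on the sequence side of Theorem~\ref{thm.intertwining} instead. Assuming $(Q\cup\cG)''\not\prec B\rtimes\Lambda$, one picks $a_n\in\cU(Q)$ and $w_n\in\cG$ with $\|P_{\cF}(a_nw_n)\|_2\to 0$ for every $\cF\subset\Gamma$ small relative to $\Lambda$. The hypothesis $Q\prec^f B$ enters through \cite[Lemma~2.5]{Va10b} as a \emph{uniform} approximation: for each $\eps>0$ there is a single finite $\cF_3\subset\Gamma$ with $\|a-P_{\cF_3}(a)\|_2<\eps$ for all $a\in\cU(Q)$ simultaneously. Combined with the intertwining partial isometry from $\cG''\prec B\rtimes\Lambda$ (which confines $w_n p_1$, up to $2\eps$, to a fixed set $\cF_2$ small relative to $\Lambda$), this forces $a_nw_np_1$ close to something supported on $\cF_3\cF_2$, contradicting $\|P_{\cF_3\cF_2}(a_nw_n)\|_2\to 0$. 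The essential point is that $\prec^f$ is exploited as a uniform $\|\cdot\|_2$-estimate on all of $\cU(Q)$ at once, whereas your argument tries to localize individual vectors $\xi_i$, and that is exactly what fails.
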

\begin{proof}
For every subset $\cF \subset \Gamma$, we denote by $P_\cF$ the orthogonal projection of $L^2(M)$ onto the closed linear span of $\{b u_g \mid b \in B, g \in \cF\}$.
We say that a subset $\cF \subset \Gamma$ is small relative to $\Lambda$ if $\cF$ is contained in a finite union of subsets of the form $g \Lambda h$ with $g,h \in \Gamma$.

Assume that $(Q \cup \cG)\dpr \not\prec B \rtimes \Lambda$. Since $\cU(Q) \cG$ is a group of unitaries generating $(Q \cup \cG)\dpr$, we get from \cite[Lemma 2.4]{Va10b} sequences of unitaries $a_n \in \cU(Q)$ and $w_n \in \cG$ such that $\|P_{\cF}(a_n w_n)\|_2 \recht 0$ for every subset $\cF \subset \Gamma$ that is small relative to $\Lambda$.

Since $\cG\dpr \prec B \rtimes \Lambda$, Theorem \ref{thm.intertwining} provides a nonzero partial isometry $v \in \M_{1,n}(\C) \ot pM$, a projection $q \in \M_n(\C) \ot (B \rtimes \Lambda)$ and a normal $*$-homomorphism $\theta : \cG\dpr \recht q(\M_n(\C) \ot (B \rtimes \Lambda))q$ such that $x v = v \theta(x)$ for all $x \in \cG\dpr$. Denote $p_1 := vv^*$ and fix $0 < \eps < \|p_1\|_2 / 3$. By the Kaplansky density theorem, we can take a finite subset $\cF_1 \subset \Gamma$ and an element $v_1$ in the linear span of $\{ b u_g \mid b \in \M_{1,n}(\C) \ot B, g \in \cF_1 \}$ such that $\|v_1\| \leq 1$ and $\|v - v_1\|_2 < \eps$.

Denote $\cF_2 := \cF_1 \Lambda \cF_1^{-1}$. Observe that $\cF_2$ is small relative to $\Lambda$. Write $x_n := v_1 \theta(w_n) v_1^*$. By construction, every $x_n$ lies in the image of $P_{\cF_2}$ and we have that $\|x_n\| \leq 1$, $\|w_n p_1 - x_n\|_2 < 2\eps$ for all $n$.

Since $Q \prec^f B$, we obtain from \cite[Lemma 2.5]{Va10b} a finite subset $\cF_3 \subset \Gamma$ such that \linebreak $\|a_n - P_{\cF_3}(a_n)\|_2 < \eps$ for all $n$. In combination with the previous paragraph, we get that $\|a_n w_n p_1 - P_{\cF_3}(a_n) x_n\|_2 < 3\eps$ for all $n$.
Denote $\cF_4 := \cF_3 \cF_2$ and observe that $\cF_4$ is still small relative to $\Lambda$. By construction, $P_{\cF_3}(a_n) x_n$ lies in the image of $P_{\cF_4}$ and we have thus shown that $\|a_n w_n p_1 - P_{\cF_4}(a_n w_n p_1)\|_2 < 3 \eps$ for all $n$.

Since $\|P_{\cF}(a_n w_n)\|_2 \recht 0$ for every subset $\cF \subset \Gamma$ that is small relative to $\Lambda$, it follows from \cite[Lemma 2.3]{Va10b} that $\|P_{\cF_4}(a_n w_n p_1)\|_2 \recht 0$. Hence $\limsup_n \|a_n w_n p_1 \|_2 \leq 3\eps$. Since $a_n$ and $w_n$ are unitaries, we arrive at the contradiction that $\|p_1\|_2 \leq 3\eps < \|p_1\|_2$.
\end{proof}

\subsection{Bimodules and weak containment}

Let $(M,\tau)$ be a tracial von Neumann algebra and $Q \subset M$ a von Neumann subalgebra. The \emph{basic construction} $\langle M,e_Q \rangle$ is defined as the von Neumann algebra acting on $L^2(M)$ generated by $M$ and the orthogonal projection $e_Q$ of $L^2(M)$ onto $L^2(Q)$. Recall that $\langle M,e_Q\rangle$ equals the commutant of the right $Q$-action on $L^2(M)$, i.e.\ $\langle M,e_Q\rangle=B(L^2(M))\cap (Q\op)'$.

Let $M, N$ be tracial von Neumann algebras. An $M$-$N$-\emph{bimodule} $\bim{M}{\cH}{N}$ is a Hilbert space $\cH$ equipped with two commuting normal unital $*$-homomorphisms $M\to B(\cH)$ and $N\op \to B(\cH)$. Any $M$-$N$-bimodule $\bim{M}{\cH}{N}$ gives rise to a $*$-homomorphism $\pi_\cH :M \otalg N\op \to B(\cH)$ given by $\pi_\cH(x\otimes y\op)\xi=x\xi y$, for all $x\in M$, $y\in N$ and $\xi\in\cH$.

If $(\rho, \cK)$ and $(\pi,\cH)$ are unitary representations of a countable group $\Gamma$, we say that $\rho$ is \emph{weakly contained} in $\pi$ if $\norm{\rho(a)} \leq\norm{\pi(a)}$ for all $a\in\C\Gamma$.

Similarly, if $\bim{M}{\cK}{N}$ and $\bim{M}{\cH}{N}$ are $M$-$N$-bimodules, we say that $\bim{M}{\cK}{N}$ is \emph{weakly contained} in $\bim{M}{\cH}{N}$ if
$\norm{\pi_\cK(x)}\leq\norm{\pi_\cH(x)}$ for all $x\in M \otalg N\op$.

\subsection{Relative amenability}

A tracial von Neumann algebra $(M,\tau)$ is called \emph{amenable} if there exists an $M$-central state on $B(L^2(M))$ whose restriction to $M$ equals $\tau$.
Also $M$ is amenable if and only if the trivial $M$-$M$-bimodule $\bim{M}{L^2(M)}{M}$ is weakly contained in the coarse $M$-$M$-bimodule $\bim{M}{(L^2(M)\otimes L^2(M))}{M}$.

\begin{definition}[{\cite[Section 2.2]{OP07}}]
Let $(M,\tau)$ be a tracial von Neumann algebra and let $P\subset pMp$ and $Q\subset M$ be von Neumann subalgebras. We say that $P$ is amenable relative to $Q$, if there exists a $P$-central positive functional on the von Neumann algebra $p\langle M,e_Q\rangle p$ whose restriction to $pMp$ equals $\tau$.

Similarly, if $\Gamma$ is a countable group with subgroups $\Lambda_1,\Lambda_2 < \Gamma$, we say that $\Lambda_1$ is amenable relative to $\Lambda_2$ if the action of $\Lambda_1$ on $\Gamma/\Lambda_2$ by left translations admits an invariant mean.
\end{definition}

The following lemma is essentially contained in \cite[Proposition 6]{MP03}. For completeness, we provide a full proof.

\begin{lemma}\label{lem.rel-amen-subgroups}
Let $\Gamma$ be a countable group and $\Gamma \actson (B,\tau)$ a trace preserving action. Put $M = B \rtimes \Gamma$ and let $\Lambda_1,\Lambda_2 < \Gamma$ be subgroups. Then the following statements are equivalent.
\begin{enumerate}[label=(\alph*)]
\item\label{rel-amen-a} $B \rtimes \Lambda_1$ is amenable relative to $B \rtimes \Lambda_2$ inside $M$.

\item\label{rel-amen-b} $L \Lambda_1$ is amenable relative to $B \rtimes \Lambda_2$ inside $M$.

\item\label{rel-amen-c} $\Lambda_1$ is amenable relative to $\Lambda_2$ inside $\Gamma$.
\end{enumerate}
\end{lemma}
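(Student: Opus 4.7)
The plan is to establish the cycle (a) $\Rightarrow$ (b) $\Rightarrow$ (c) $\Rightarrow$ (a). The implication (a) $\Rightarrow$ (b) is immediate: any $(B \rtimes \Lambda_1)$-central positive functional on $\langle M, e_{B \rtimes \Lambda_2}\rangle$ is a fortiori $L\Lambda_1$-central, since $L\Lambda_1 \subset B \rtimes \Lambda_1$.

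For the other two implications, the key object is the family of projections $p_{g\Lambda_2} := u_g \, e_{B\rtimes\Lambda_2}\, u_g^*$ inside $\langle M, e_{B\rtimes\Lambda_2}\rangle$. They depend only on the coset $g\Lambda_2$, are mutually orthogonal, sum strongly to $1$, and satisfy $u_h p_{g\Lambda_2} u_h^* = p_{hg\Lambda_2}$ for every $h \in \Gamma$. They therefore span a copy of $\ell^\infty(\Gamma/\Lambda_2)$ inside $\langle M, e_{B\rtimes\Lambda_2}\rangle$ on which $\Gamma$ acts by left translation via conjugation by $u_h$. This is part of the standard identification
$$\langle M, e_{B\rtimes\Lambda_2}\rangle \;\cong\; (B \ovt \ell^\infty(\Gamma/\Lambda_2)) \rtimes \Gamma,$$
where $\Gamma$ acts diagonally (by its original action on $B$ and by left translation on $\Gamma/\Lambda_2$), with $M$ embedded via $b u_g \mapsto (b \otimes 1) u_g$.

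For (b) $\Rightarrow$ (c), starting from an $L\Lambda_1$-central positive functional $\varphi$ on $\langle M, e_{B\rtimes\Lambda_2}\rangle$ with $\varphi|_M = \tau$, I would simply restrict $\varphi$ to the copy of $\ell^\infty(\Gamma/\Lambda_2)$ described above. Since $\varphi(1) = 1$, the restriction is a state, hence a mean $m$ on $\Gamma/\Lambda_2$. For $h \in \Lambda_1$ and $f \in \ell^\infty(\Gamma/\Lambda_2)$, $L\Lambda_1$-centrality gives $\varphi(u_h f u_h^*) = \varphi(f)$, and $u_h f u_h^*$ is precisely the translate of $f$ by $h$; hence $m$ is $\Lambda_1$-invariant.

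For (c) $\Rightarrow$ (a), given a $\Lambda_1$-invariant mean $m$ on $\Gamma/\Lambda_2$, I would use the above identification and set
$$\Phi := (\tau_B \otimes m) \circ E,$$
where $E$ is the canonical conditional expectation onto $B \ovt \ell^\infty(\Gamma/\Lambda_2)$ (extracting the coefficient at $u_e$). A short computation shows that $\Phi(bu_g)$ vanishes for $g \neq e$ and equals $\tau_B(b)$ for $g = e$, so $\Phi|_M = \tau$. The $B$-centrality of $\Phi$ follows from the traciality of $\tau_B$ together with the commutation of $B \otimes 1$ and $1 \otimes \ell^\infty(\Gamma/\Lambda_2)$; the $\Lambda_1$-centrality follows, via the identity $E(u_h T u_h^*) = \sigma_h(E(T))$ for $h \in \Lambda_1$, from the joint invariance of $\tau_B \otimes m$ under the diagonal $\Lambda_1$-action, which uses the $\Gamma$-invariance of $\tau_B$ and the $\Lambda_1$-invariance of $m$. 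The main technical point requiring care is the spatial identification of the Jones basic construction with the diagonal crossed product so that the projections $u_g e_{B\rtimes\Lambda_2} u_g^*$ match the atoms of $\ell^\infty(\Gamma/\Lambda_2)$ and the $\Gamma$-action matches the diagonal one; once this is in place, the centrality and trace computations above are routine.
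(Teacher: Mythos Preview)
Your argument is correct. The implications (a)$\Rightarrow$(b) and (b)$\Rightarrow$(c) match the paper exactly: the paper also builds the $*$-homomorphism $\pi:\ell^\infty(\Gamma/\Lambda_2)\to\langle M,e_{B\rtimes\Lambda_2}\rangle$ via $\pi(\delta_{g\Lambda_2})=u_g e_{B\rtimes\Lambda_2}u_g^*$ and composes the given central state with $\pi$.

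For (c)$\Rightarrow$(a) you take a genuinely different route. The paper does \emph{not} invoke the identification $\langle M,e_{B\rtimes\Lambda_2}\rangle\cong (B\ovt\ell^\infty(\Gamma/\Lambda_2))\rtimes\Gamma$ nor any conditional expectation. Instead it works on the Hilbert-space level: it realizes $L^2(\langle M,e_{B\rtimes\Lambda_2}\rangle)$ as $L^2(M)\otimes\ell^2(\Gamma/\Lambda_2)$ via an explicit $M$-$M$-bimodular unitary, picks a sequence of almost $\Lambda_1$-invariant unit vectors $\xi_n\in\ell^2(\Gamma/\Lambda_2)$, and obtains the desired central state as a weak$^*$ limit point of the vector states $\langle\,\cdot\,(1\otimes\xi_n),1\otimes\xi_n\rangle$. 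Your construction $\Phi=(\tau_B\otimes m)\circ E$ is more algebraic and avoids the passage through almost invariant vectors; the price is that you must justify the crossed-product picture of the basic construction and make sense of $\tau_B\otimes m$ on $B\ovt\ell^\infty(\Gamma/\Lambda_2)$ when $m$ is not normal (most cleanly as $m\circ(\tau_B\otimes\id)$, using that $\tau_B\otimes\id$ is a normal slice map). Both approaches are standard; yours is shorter once the identification is granted, while the paper's sidesteps that identification entirely.
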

\begin{proof}
\ref{rel-amen-a} $\Rightarrow$ \ref{rel-amen-b} is trivial.

\ref{rel-amen-b} $\Rightarrow$ \ref{rel-amen-c}. For every $g \in \Gamma$, we denote by $\delta_{g \Lambda_2} \in \ell^\infty(\Gamma/\Lambda_2)$ the function that is equal to $1$ in $g \Lambda_2$ and that is equal to $0$ elsewhere. There is a unique unital normal $*$-homomorphism
$$\pi : \ell^\infty(\Gamma/\Lambda_2) \recht \langle M, e_{B \rtimes \Lambda_2} \rangle \quad\text{satisfying}\quad \pi(\delta_{g \Lambda_2}) = u_g \, e_{B \rtimes \Lambda_2} \, u_g^* \quad\text{for all}\;\; g \in \Gamma \; .$$
By construction, $\pi$ conjugates the left translation action of $\Gamma$ on $\ell^\infty(\Gamma/\Lambda_2)$ with the action $(\Ad u_g)_{g \in \Gamma}$. Since $L \Lambda_1$ is amenable relative to $B \rtimes \Lambda_2$ inside $M$, we can take an $L \Lambda_1$-central state $\Om$ on $\langle M, e_{B \rtimes \Lambda_2} \rangle$. Then $\Om \circ \pi$ is a $\Lambda_1$-invariant state on $\ell^\infty(\Gamma/\Lambda_2)$. Hence \ref{rel-amen-c} holds.

\ref{rel-amen-c} $\Rightarrow$ \ref{rel-amen-a}. We denote by $\eta : \Gamma \recht \cU(\ell^2(\Gamma/\Lambda_2))$ the unitary representation of $\Gamma$ given by left translation operators.
We then turn the Hilbert space $L^2(M) \ot \ell^2(\Gamma/\Lambda_2)$ into an $M$-$M$-bimodule with the bimodule action given by
$$(b u_g) \cdot (x \ot \xi) \cdot y := b u_g x y \ot \eta_g \xi \quad\text{for all}\;\; b \in B, g \in \Gamma, x,y \in M, \xi \in \ell^2(\Gamma/\Lambda_2) \; .$$
Since \ref{rel-amen-c} holds, take a sequence of unit vectors $\xi_n \in \ell^2(\Gamma/\Lambda_2)$ satisfying $\lim_n \|\eta_g \xi_n - \xi_n\|_2 = 0$ for all $g \in \Lambda_1$. Then the sequence of vectors $1 \ot \xi_n \in L^2(M) \ot \ell^2(\Gamma/\Lambda_2)$ satisfies
\begin{align*}
& \langle x \cdot (1 \ot \xi_n) , 1 \ot \xi_n \rangle = \tau(x) \;\;\text{for all}\;\; x \in M \quad\text{and}\\
& \lim_n \| bu_g \cdot (1 \ot \xi_n) - (1 \ot \xi_n) \cdot bu_g\|_2 = 0 \;\;\text{for all}\;\; b \in \cU(B), g \in \Lambda_1 \; .
\end{align*}
Observe that there is a unique unitary operator
$$\theta : L^2(\langle M, e_{B \rtimes \Lambda_2} \rangle) \recht L^2(M) \ot \ell^2(\Gamma/\Lambda_2) \quad\text{satisfying}\quad \theta( b u_g \, e_{B \rtimes \Lambda_2} \, x) = b u_g x \ot \delta_{g \Lambda_2}$$
for all $b \in B, g \in \Gamma, x \in M$. This unitary $\theta$ is $M$-$M$-bimodular. Define $S_n \in L^2(\langle M, e_{B \rtimes \Lambda_2} \rangle)$ given by $S_n := \theta^{-1}(1 \ot \xi_n)$. Choose a state $\Om$ on $\langle M, e_{B \rtimes \Lambda_2} \rangle$ as a weak$^*$-limit point of the sequence of states $T \mapsto \langle T S_n,S_n \rangle$. By construction, $\Om(x) = \tau(x)$ for all $x \in M$ and $\Om$ is $\cG$-central, where $\cG = \{b u_g \mid b \in \cU(B), g \in \Lambda_1\}$. Using the Cauchy-Schwarz inequality, it follows that $\Om$ is $(B \rtimes \Lambda_1)$-central. So \ref{rel-amen-a} holds.
\end{proof}

We need two elementary lemmas.

\begin{lemma}\label{lem.rel-amen-max-proj}
Let $(M,\tau)$ be a tracial von Neumann algebra and let $P\subset pMp$ and $Q\subset M$ be von Neumann subalgebras. The set of projections $p_0 \in P' \cap pMp$ with the property that $P p_0$ is amenable relative to $Q$, attains its maximum in a projection $p_1$ that belongs to the center of the normalizer of $P$ inside $pMp$.
\end{lemma}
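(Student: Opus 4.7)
The plan is to follow the standard template for such maximality lemmas: first show that the collection $\cF := \{p_0 \in P' \cap pMp : Pp_0 \text{ is amenable relative to } Q\}$ is closed under suprema, so that $p_1 := \sup \cF$ itself lies in $\cF$; then argue by maximality that $p_1$ commutes with every element of $\cN_{pMp}(P)$; finally upgrade this commutation to $p_1 \in \cZ(\cN_{pMp}(P)'')$ via a spectral projection trick.

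The main technical step will be closure of $\cF$ under pairwise suprema, since $p_0$ and $p_0'$ need not commute. I would write $p_0 \vee p_0' = p_0 + f$ with $f := (p_0 \vee p_0') - p_0 \in P' \cap pMp$ and invoke the Kaplansky formula (applied inside the von Neumann algebra generated by $p_0,p_0'$, which sits inside $P' \cap pMp$) to produce a partial isometry $v \in P' \cap pMp$ with $vv^* = f$ and $v^* v = p_0' - p_0 \wedge p_0' \leq p_0'$. Restricting a witnessing functional for $Pp_0' \in \cF$ to the corner $v^*v \langle M,e_Q\rangle v^*v$ gives $P(v^*v) \in \cF$, and transporting via $T \mapsto \Om(v^*Tv)$ yields $Pf \in \cF$; a direct sum of witnessing functionals for the orthogonal projections $p_0$ and $f$ then produces $P(p_0 \vee p_0') \in \cF$. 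For an increasing net $p_\alpha \nearrow p_\infty$ in $\cF$ with witnesses $\Om_\alpha$ on $p_\alpha \langle M,e_Q\rangle p_\alpha$, the extensions $\widetilde{\Om}_\alpha(T) := \Om_\alpha(p_\alpha T p_\alpha)$ on $p_\infty \langle M,e_Q\rangle p_\infty$ will automatically be $Pp_\infty$-central (a short computation using $p_\alpha \in P'$ and $p_\alpha \leq p_\infty$), while $\widetilde{\Om}_\alpha(x) = \tau(p_\alpha x p_\alpha) \recht \tau(x)$ for $x \in p_\infty M p_\infty$; any weak$^*$-cluster point witnesses $p_\infty \in \cF$.

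With $p_1 := \sup \cF$ now in $\cF$, I would next fix $u \in \cN_{pMp}(P)$ and note that $u \in M \subset \langle M,e_Q\rangle$, so $\Ad u$ restricts to a $*$-automorphism of $\langle M, e_Q\rangle$. The formula $\Om'(T) := \Om(u^* T u)$ then transports a $Pp_1$-central witnessing functional into a $P(up_1u^*)$-central witnessing functional, showing $P(up_1u^*) \in \cF$; maximality and the symmetry $u \leftrightarrow u^*$ force $up_1u^* = p_1$. Finally, since $p_1 \in P'$, the unitaries $u_t := (p - p_1) + e^{it} p_1 \in \cU(pMp)$ commute with $P$ (so $u_t \in \cN_{pMp}(P)$), and for $t \in (0,2\pi)$ the projection $p_1$ is the spectral projection of $u_t$ at $e^{it}$, giving $p_1 \in \{u_t\}'' \subset \cN_{pMp}(P)''$. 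Combined with the commutation just established, $p_1 \in \cN_{pMp}(P)' \cap \cN_{pMp}(P)'' = \cZ(\cN_{pMp}(P)'')$, as required.
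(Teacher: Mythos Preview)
Your proof is correct and follows the same overall template as the paper's: show $\cF$ is stable under conjugation by normalizing unitaries, closed under pairwise suprema and increasing limits, and conclude. The increasing-limit argument and the normalizer-invariance argument are essentially identical to the paper's.

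The one genuine difference is the pairwise-supremum step. You decompose $p_0 \vee p_0' = p_0 + f$ orthogonally, use the Kaplansky formula inside $W^*(p_0,p_0') \subset P' \cap pMp$ to find $v$ with $vv^* = f$ and $v^*v \leq p_0'$, and then transport the witnessing functional through $v$ to obtain a functional on $(p_0 \vee p_0')\langle M,e_Q\rangle(p_0 \vee p_0')$ whose restriction to $(p_0 \vee p_0')M(p_0 \vee p_0')$ is \emph{exactly} $\tau$. The paper instead simply writes down $\Om(T) := \Om_0(p_0 T p_0) + \Om_1(p_0' T p_0')$, observes that this is $P(p_0 \vee p_0')$-central with restriction to $M$ normal and \emph{faithful} (though not equal to $\tau$), and then invokes \cite[Theorem 2.1]{OP07} to upgrade this to genuine relative amenability. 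Your route is longer but entirely self-contained; the paper's is a two-line shortcut at the cost of citing an external characterization of relative amenability. Your explicit verification that $p_1 \in \cN_{pMp}(P)''$ via the unitaries $u_t$ is also something the paper leaves implicit (it follows immediately from $\cU(P' \cap pMp) \subset \cN_{pMp}(P)$).
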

\begin{proof}
Denote by $\cP$ the set of projections $p_0 \in P' \cap pMp$ with the property that $P p_0$ is amenable relative to $Q$. If $p_0 \in \cP$ and $u \in  \cN_{pMp}(P)$, it is easy to check that $u p_0 u^* \in \cP$. It therefore suffices to prove the following two statements.

1.\ {\it If $p_0,p_1 \in \cP$, then $q := p_0 \vee p_1$ belongs to $\cP$.} For all $j \in \{0,1\}$, choose $P p_j$-central positive functionals $\Om_j$ on $p_j \langle M,e_Q \rangle p_j$ with the property that $\Om_j(x) = \tau(x)$ for all $x \in p_j M p_j$. Define the positive functional $\Om$ on $q \langle M,e_Q \rangle q$ by the formula $\Om(T) := \Om_0(p_0 T p_0) + \Om_1(p_1 T p_1)$. It is easy to check that $\Om$ is $Pq$-central and that the restriction of $\Om$ to $q M q$ is normal and faithful. By \cite[Theorem 2.1]{OP07}, we get that $Pq$ is amenable relative to $Q$.

2.\ {\it If $p_n$ is an increasing sequence in $\cP$ that converges strongly to $q$, then also $q \in \cP$.} Take $P p_n$-central positive functionals $\Om_n$ on $p_n \langle M,e_Q \rangle p_n$ with the property that $\Om_n(x) = \tau(x)$ for all $n \in \N$ and all $x \in p_n M p_n$. Choose a positive functional $\Om$ on $q \langle M,e_Q \rangle q$ as a weak$^*$ limit point of the sequence of functionals $T \mapsto \Om_n(p_n T p_n)$. By construction, $\Om$ is $P q$-central and $\Om(x) = \tau(x)$ for all $x \in q M q$. So $q \in \cP$.
\end{proof}

We also need the following special case of \cite[Proposition 2.7]{PV11}.

\begin{lemma}[{\cite[Proposition 2.7]{PV11}}] \label{lem.rel-amen-intersect}
Let $\Gamma$ be a countable group and $\Gamma \actson (B,\tau)$ a trace preserving action. Put $M = B \rtimes \Gamma$. Let $p \in M$ be a projection and $P \subset pMp$ a von Neumann subalgebra. Assume that $\Lambda_1,\Lambda_2 < \Gamma$ are subgroups with $\Lambda_2 \lhd \Gamma$ being normal. If $P$ is amenable relative to $B \rtimes \Lambda_j$ for all $j \in \{1,2\}$, then $P$ is amenable relative to $B \rtimes (\Lambda_1 \cap \Lambda_2)$.
\end{lemma}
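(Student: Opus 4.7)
My plan is to reformulate relative amenability as a weak containment of bimodules and to exploit the normality of $\Lambda_2$ via a diagonal orbit decomposition. For any subgroup $\Lambda<\Gamma$, put $\cH_\Lambda:=L^2(M)\otimes\ell^2(\Gamma/\Lambda)$, the $M$-$M$-bimodule appearing in the proof of Lemma \ref{lem.rel-amen-subgroups}; it is unitarily isomorphic to $L^2(\langle M,e_{B\rtimes\Lambda}\rangle)$. Standard arguments (\cite{OP07}-type) show that $P$ is amenable relative to $B\rtimes\Lambda$ if and only if $\bim{P}{L^2(P)}{P}$ is weakly contained in $\bim{P}{p\cH_\Lambda p}{P}$, or equivalently there is a net of unit vectors $(\xi_i)\subset p\cH_\Lambda p$ satisfying $\|a\xi_i-\xi_i a\|_2\to 0$ for $a\in P$ and $\langle x\xi_i,\xi_i\rangle,\,\langle\xi_i x,\xi_i\rangle\to\tau(x)$ for $x\in pMp$.

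The structural input is an identification of the Connes tensor product $\cH_{\Lambda_1}\otimes_M\cH_{\Lambda_2}$ with $L^2(M)\otimes\ell^2(\Gamma/\Lambda_1\times\Gamma/\Lambda_2)$ as $M$-$M$-bimodules, where $\Gamma$ acts diagonally from the left on the two quotient factors and the right $M$-action is supported only on $L^2(M)$. Because $\Lambda_2\lhd\Gamma$, the stabilizer of $(\Lambda_1,h\Lambda_2)$ under the diagonal $\Gamma$-action equals $\Lambda_1\cap h\Lambda_2 h^{-1}=\Lambda_1\cap\Lambda_2$ for every $h\in\Gamma$. Each diagonal orbit is therefore $\Gamma$-equivariantly isomorphic to $\Gamma/(\Lambda_1\cap\Lambda_2)$, and parameterizing the orbits by $\Lambda_1\backslash\Gamma/\Lambda_2$ yields the $M$-$M$-bimodule decomposition
$$\cH_{\Lambda_1}\otimes_M\cH_{\Lambda_2}\;\cong\;\bigoplus_{\Lambda_1\backslash\Gamma/\Lambda_2}\cH_{\Lambda_1\cap\Lambda_2}\; .$$

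To finish, I would take asymptotically $P$-central and $pMp$-tracial right-$M$-bounded unit vectors $\xi_n\in p\cH_{\Lambda_1}p$ and $\eta_n\in p\cH_{\Lambda_2}p$ supplied by the two amenability hypotheses (bounded vectors are dense, and the required properties persist under small $L^2$-perturbations). The Connes tensor vectors $\zeta_n:=\xi_n\otimes_M\eta_n\in p(\cH_{\Lambda_1}\otimes_M\cH_{\Lambda_2})p$ inherit the asymptotic $P$-centrality and traciality: the formula $\|\xi\otimes_M\eta\|^2=\langle L_\xi^*L_\xi\cdot\eta,\eta\rangle$ together with the fact that right-traciality of $\xi_n$ forces $L_{\xi_n}^*L_{\xi_n}\to p$ (and symmetrically $R_{\eta_n}^*R_{\eta_n}\to p$ for left-traciality of $\eta_n$) reduces the verification to a direct computation. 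Combined with the decomposition above and the elementary fact that a cyclic weak containment in a direct sum of copies of $\cH_{\Lambda_1\cap\Lambda_2}$ is a weak containment in a single copy, this yields $\bim{P}{L^2(P)}{P}\prec\bim{P}{p\cH_{\Lambda_1\cap\Lambda_2}p}{P}$, which is the desired amenability of $P$ relative to $B\rtimes(\Lambda_1\cap\Lambda_2)$. The main technical hurdle is the Connes tensor step: passing the almost-central and tracial conditions through $\otimes_M$ rigorously for vectors that need not be bounded a priori, which is essentially the content of the proof of \cite[Proposition 2.7]{PV11}.
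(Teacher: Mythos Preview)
The paper does not actually prove this lemma; it is stated as a direct citation of \cite[Proposition~2.7]{PV11} with no argument given. Your sketch is correct and is precisely the specialization of the \cite{PV11} argument to the crossed-product setting: the bimodule identification $\cH_\Lambda\cong L^2(M)\otimes_{B\rtimes\Lambda}L^2(M)$, the computation of $\cH_{\Lambda_1}\otimes_M\cH_{\Lambda_2}$ via the diagonal action on $\Gamma/\Lambda_1\times\Gamma/\Lambda_2$, and the orbit decomposition using normality of $\Lambda_2$ are all exactly as in that reference. One small clarification: for a general point $(g\Lambda_1,h\Lambda_2)$ the stabilizer is $g(\Lambda_1\cap\Lambda_2)g^{-1}$ rather than $\Lambda_1\cap\Lambda_2$ itself, but since $B\rtimes g(\Lambda_1\cap\Lambda_2)g^{-1}=u_g(B\rtimes(\Lambda_1\cap\Lambda_2))u_g^*$ the associated bimodules are isomorphic and the conclusion is unaffected. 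You also correctly identify the only genuinely delicate point, namely the propagation of asymptotic centrality and traciality through the Connes tensor product, which is handled in \cite{PV11} via careful use of bounded vectors.
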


We finally need the concept of a left amenable bimodule, see \cite[Theorem 2.2]{Si10} and \cite[Definition 2.3]{PV11}.

\begin{definition}
Let $(M,\tau)$ and $(N,\tau)$ be tracial von Neumann algebras. Let $P \subset M$ be a von Neumann subalgebra. An $M$-$N$-bimodule $\bim{M}{\cK}{N}$ is said to be left $P$-amenable if $B(\cK) \cap (N\op)'$ admits a $P$-central state whose restriction to $M$ equals $\tau$.
\end{definition}

If $(M,\tau)$ is a tracial von Neumann algebra and if $P \subset pMp$, $Q \subset M$ are von Neumann subalgebras, then by definition, $P$ is amenable relative to $Q$ if and only if the $pMp$-$Q$-bimodule $p L^2(M)$ is left $P$-amenable.

The following easy lemmas are essentially contained in \cite[Section 2.2]{OP07}. For completeness, we provide full proofs.

\begin{lemma}\label{lem.left-amen-part}
Let $(M,\tau)$ and $(N,\tau)$ be tracial von Neumann algebras. Let $P \subset M$ be a von Neumann subalgebra and $\bim{M}{\cK}{N}$ an $M$-$N$-bimodule. The following two statements are equivalent.
\begin{enumerate}[label=(\alph*)]
\item There exists a nonzero $P$-central positive functional on $B(\cK) \cap (N\op)'$ whose restriction to $M$ is normal.
\item There exists a nonzero projection $p \in P' \cap M$ such that the $pMp$-$N$-bimodule $\bim{pMp}{(p \cK)}{N}$ is left $Pp$-amenable.
\end{enumerate}
\end{lemma}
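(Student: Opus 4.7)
The plan is to prove $(b) \Rightarrow (a)$ by inflation and $(a) \Rightarrow (b)$ by cutting by a spectral projection of the density of $\Omega|_M$. Throughout write $\cN := B(\cK) \cap (N\op)'$; since $p \in M \subset \cN$ commutes with the right $N$-action, one has $B(p\cK) \cap (N\op)' = p\cN p$.

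For $(b) \Rightarrow (a)$, given a $Pp$-central state $\Omega_p$ on $p\cN p$ with $\Omega_p(x) = \tau(x)/\tau(p)$ for $x \in pMp$, I would set $\Omega(T) := \tau(p)\, \Omega_p(pTp)$ on $\cN$. Because $p \in P' \cap M$ centralizes $P$, this $\Omega$ is positive, nonzero and $P$-central, and its restriction to $M$ is the normal functional $x \mapsto \tau(px)$.

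The direction $(a) \Rightarrow (b)$ is the main point. Set $\varphi := \Omega|_M$, a normal positive $P$-central functional on $M$, and write $\varphi(\,\cdot\,) = \tau(h \,\cdot\,)$ with $h \in L^1(M)_+$. The centrality condition $\tau(hax) = \tau(hxa)$ for all $a \in P$, $x \in M$ translates via the trace property into $ha = ah$, so $h \in L^1(P' \cap M)_+$; moreover $\tau(h) = \Omega(1) > 0$ because $\Omega$ is nonzero and positive. Choose $0 < c < C$ so that the spectral projection $p := \chi_{[c,C]}(h)$ is nonzero; then $p \in P' \cap M$ and $k := (hp)^{-1}$ is a well-defined bounded positive element of $p(P' \cap M)p$. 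The candidate state on $p\cN p$ is
$$\Omega_p(T) := \frac{1}{\tau(p)}\, \Omega\bigl(k^{1/2} T k^{1/2}\bigr).$$
Positivity is immediate. $Pp$-centrality follows from $P$-centrality of $\Omega$ together with the fact that $k^{1/2} \in (P' \cap M)p$ commutes with every element of $Pp$. On $pMp$ the restriction is $\Omega_p(x) = \tau(h k^{1/2} x k^{1/2})/\tau(p) = \tau(hk x)/\tau(p) = \tau(p x)/\tau(p)$, using $[h,k]=0$ and $hk = p$; this also gives $\Omega_p(p) = 1$, so $\Omega_p$ is indeed a state.

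The only mildly delicate point is locating $h$ inside $L^1(P' \cap M)$ rather than merely $L^1(M)$; once that is in place, positivity and $Pp$-centrality of the rescaled functional $\Omega_p$ are formal consequences of $k^{1/2}$ commuting with $P$.
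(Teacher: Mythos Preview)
Your proof is correct and follows essentially the same approach as the paper: for $(b)\Rightarrow(a)$ you inflate via $T\mapsto \Omega_p(pTp)$, and for $(a)\Rightarrow(b)$ you take the Radon--Nikodym density $h\in L^1(P'\cap M)_+$ of $\Omega|_M$, cut by a spectral projection of $h$ on which $h$ is boundedly invertible, and rescale by $k^{1/2}=(hp)^{-1/2}$. The only cosmetic differences are that the paper uses the one-sided spectral projection $\chi_{(\varepsilon,\infty)}(h)$ rather than your two-sided $\chi_{[c,C]}(h)$, and you are slightly more explicit about the normalization by $1/\tau(p)$ needed to obtain a genuine state.
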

\begin{proof}
(a) $\Rightarrow$ (b).\ Let $\Om$ be a nonzero $P$-central positive functional on $\cN := B(\cK) \cap (N\op)'$ whose restriction to $M$, denoted by $\om$ is normal. Take $T \in L^1(M)^+$ such that $\om(x) = \tau(xT)$ for all $x \in M$. Note that $T \neq 0$. Since $\om$ is $P$-central, we have that $T \in L^1(P' \cap M)$. Take $\eps > 0$ small enough such that the spectral projection $p:=\chi_{(\eps,+\infty)}(T)$ is nonzero. Note that $p \in P' \cap M$ and that we can take $S \in p(P' \cap M)^+ p$ such that $T S = S T = p$. The formula $y \mapsto \Om(S^{1/2} y S^{1/2})$ defines $Pp$-central positive functional on $B(p \cK) \cap (N\op)'$ whose restriction to $pMp$ equals $\tau$. So $\bim{pMp}{(p \cK)}{N}$ is left $Pp$-amenable.

(b) $\Rightarrow$ (a).\ Assume that $p \in P' \cap M$ is a nonzero projection and that $\Om$ is a $Pp$-central positive functional on $B(p \cK) \cap (N\op)'$ whose restriction to $pMp$ equals $\tau$. Then the formula $y \mapsto \Om(p y p)$ defines a nonzero $P$-central positive functional on $B(\cK) \cap (N\op)'$ whose restriction to $M$ is normal.
\end{proof}

\begin{lemma}\label{lem.left-amen-sequence}
Let $(M,\tau)$ be a tracial von Neumann algebra with von Neumann subalgebra $P \subset M$. Let $\cK$ be an $M$-$M$-bimodule. Assume that $\xi_n \in \cK$ is a sequence of vectors and $\eps > 0$ such that
\begin{itemize}
\item $\|x \xi_n\| \leq \|x\|_2$ for all $x \in M$ and $n \in \N$,
\item $\|\xi_n\| \geq \eps$ for all $n \in \N$,
\item for all $x \in P$, we have that $\lim_n \| x \xi_n - \xi_n x\| = 0$.
\end{itemize}
Then there exists a nonzero projection $p \in P' \cap M$ such that the $pMp$-$M$-bimodule $p \cK$ is left $Pp$-amenable.
\end{lemma}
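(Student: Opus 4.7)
The plan is to apply Lemma~\ref{lem.left-amen-part}: it suffices to exhibit a nonzero $P$-central positive functional $\Om$ on $B(\cK) \cap (M\op)'$ whose restriction to $M$ is normal. The natural candidates come from the approximately central vectors $\xi_n$ themselves.

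For each $n$, I would define the positive functional $\Om_n$ on $B(\cK) \cap (M\op)'$ by $\Om_n(T) := \langle T\xi_n,\xi_n\rangle$. Applying the first hypothesis with $x = 1$ gives $\|\xi_n\| \leq 1$, so $\Om_n(1) = \|\xi_n\|^2 \in [\eps^2,1]$. Let $\Om$ be any weak-$*$ cluster point of the sequence $(\Om_n)$ (which exists by Banach--Alaoglu). Then $\Om$ is a positive functional with $\Om(1) \geq \eps^2 > 0$, hence $\Om \neq 0$. Two things remain to be verified.

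First, $\Om|_M$ is normal: the first hypothesis gives the uniform $L^2$-estimate $|\Om_n(x)| \leq \|x\xi_n\|\,\|\xi_n\| \leq \|x\|_2$ for all $x \in M$, which passes to the weak-$*$ limit. Therefore $\Om|_M$ extends continuously to $L^2(M)$, so by Riesz it is of the form $x \mapsto \tau(xT)$ for some $T \in L^2(M) \subset L^1(M)$, and is in particular normal. Second, $\Om$ is $P$-central: for $x \in P$ and $T \in B(\cK) \cap (M\op)'$, I would exploit the right $M$-linearity of $T$, namely $T(\xi_n x) = (T\xi_n)x$, to compute
$$\Om_n(Tx) - \Om_n(xT) = \langle T(x\xi_n - \xi_n x),\xi_n\rangle - \langle T\xi_n,\,x^*\xi_n - \xi_n x^*\rangle.$$
This difference is bounded in absolute value by $\|T\|\bigl(\|x\xi_n - \xi_n x\| + \|x^*\xi_n - \xi_n x^*\|\bigr)$ and tends to $0$ by the third hypothesis applied to $x$ and $x^*$.

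Once $\Om$ is constructed with these three properties, Lemma~\ref{lem.left-amen-part} immediately yields the required nonzero projection $p \in P' \cap M$ such that $\bim{pMp}{(p\cK)}{M}$ is left $Pp$-amenable. The only mildly delicate ingredient is the normality of $\Om|_M$: this is exactly what the hypothesis $\|x\xi_n\| \leq \|x\|_2$ is tailored to guarantee, preventing the weak-$*$ limit from acquiring a singular component on $M$. The other two verifications are routine weak-$*$ compactness manipulations.
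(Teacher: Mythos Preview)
Your proof is correct and follows essentially the same approach as the paper: take a weak-$*$ cluster point of the vector functionals $T \mapsto \langle T\xi_n,\xi_n\rangle$, verify nonzeroness, $P$-centrality, and normality on $M$, then invoke Lemma~\ref{lem.left-amen-part}. The only cosmetic difference is that the paper deduces normality from the positivity bound $\Om(x) \leq \tau(x)$ for $x \in M^+$ (via $\|x^{1/2}\xi_n\|^2 \leq \|x^{1/2}\|_2^2$), whereas you use the equivalent $L^2$-bound $|\Om(x)| \leq \|x\|_2$; both follow from the same first hypothesis.
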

\begin{proof}
Choose a positive functional $\Om$ on $B(\cK) \cap (M\op)'$ as a weak$^*$ limit point of the sequence of positive functionals $y \mapsto \langle y \xi_n,\xi_n\rangle$. The conditions on $\xi_n$ imply that $\Om(x) \leq \tau(x)$ for all $x \in M^+$, that $\Om(1) \geq \eps^2$ and that $\Om$ is $P$-central. In particular, $\Om$ is nonzero and the restriction of $\Om$ to $M$ is normal. The conclusion now follows from Lemma \ref{lem.left-amen-part}.
\end{proof}

\begin{lemma}\label{lem.left-amen-direct-sum}
Let $(M,\tau)$ and $(N,\tau)$ be tracial von Neumann algebras. Let $P \subset M$ be a von Neumann subalgebra. Assume that for all $j \in \{1,\ldots,\ell\}$, we are given an $M$-$N$-bimodule $\cK_j$. If $\bigoplus_{j=1}^\ell \cK_j$ is a left $P$-amenable $M$-$N$-bimodule, then there exists a $j \in \{1,\ldots,\ell\}$ and a nonzero projection $p \in P' \cap M$ such that $p \cK_j$ is a left $Pp$-amenable $pMp$-$N$-bimodule.
\end{lemma}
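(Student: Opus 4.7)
The plan is to exploit the direct sum structure directly. Let $e_j \in B(\cK)$ denote the orthogonal projection from $\cK := \bigoplus_{j=1}^\ell \cK_j$ onto the summand $\cK_j$. Because the decomposition is as $M$-$N$-bimodules, each $e_j$ commutes both with the left action of $M$ and with the right action of $N\op$; in particular $e_j \in B(\cK) \cap (N\op)'$ and $[e_j, x] = 0$ for all $x \in M$. Using left $P$-amenability of $\cK$, fix a $P$-central state $\Om$ on $B(\cK) \cap (N\op)'$ with $\Om|_M = \tau$. Since $\sum_j e_j = 1$ and $\Om(1) = 1$, I can choose an index $j$ with $\Om(e_j) > 0$.

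I would then restrict $\Om$ to the corner $B(\cK_j) \cap (N\op)' = e_j \bigl( B(\cK) \cap (N\op)' \bigr) e_j$ to obtain a nonzero positive functional $\om_j$. Its $P$-centrality is a direct check: for $x \in P$ and $T \in B(\cK_j) \cap (N\op)'$, the left action of $x$ on $\cK_j$ is given by $x e_j$, and using $e_j T = T = T e_j$ together with $[x, e_j] = 0$, the required equality reduces to $\Om(xT) = \Om(Tx)$, which is exactly $P$-centrality of $\Om$. The restriction of $\om_j$ to $M$ through the action $x \mapsto xe_j$ is thus the functional $x \mapsto \Om(xe_j)$.

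The only subtle point is normality of this restriction. Since $x$ and $e_j$ commute, for $x \in M^+$ the element $xe_j$ is again positive and is dominated by $x$, so $0 \leq \Om(xe_j) \leq \Om(x) = \tau(x)$. The restriction is therefore dominated by $\tau$ on $M^+$, hence normal. This is also what makes $\om_j$ itself a well-defined positive functional on the corner, nonzero because $\om_j(e_j) = \Om(e_j) > 0$.

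At that point I am exactly in the hypotheses of Lemma \ref{lem.left-amen-part} applied to the $M$-$N$-bimodule $\bim{M}{\cK_j}{N}$: a nonzero $P$-central positive functional on $B(\cK_j) \cap (N\op)'$ whose restriction to $M$ is normal. That lemma then hands back a nonzero projection $p \in P' \cap M$ such that the $pMp$-$N$-bimodule $\bim{pMp}{(p\cK_j)}{N}$ is left $Pp$-amenable, as required. There is no real obstacle; the one point worth flagging is that the cut-down functional $\om_j$ generally does not agree with $\tau$ on $M$, which is precisely why Lemma \ref{lem.left-amen-part} is formulated with only a normality requirement on the restriction rather than equality with $\tau$.
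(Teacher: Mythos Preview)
Your proof is correct and follows essentially the same approach as the paper's: pick a summand $j$ with $\Om(e_j) > 0$, cut down $\Om$ to the corner to get a nonzero $P$-central positive functional on $B(\cK_j) \cap (N\op)'$ whose restriction to $M$ is dominated by $\tau$ and hence normal, then invoke Lemma~\ref{lem.left-amen-part}. You spell out the $P$-centrality verification and the normality argument in slightly more detail than the paper, but the structure is identical.
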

\begin{proof}
Put $\cK := \bigoplus_{j=1}^\ell \cK_j$ and denote by $p_j$ the orthogonal projection of $\cK$ onto $\cK_j$. Let $\Om$ be a $P$-central state on $B(\cK) \cap (N\op)'$ whose restriction to $M$ equals $\tau$. Take $j \in \{1,\ldots,\ell\}$ such that $\Om(p_j) \neq 0$. Then the formula $y \mapsto \Om(p_j y p_j)$ defines a nonzero $P$-central positive functional on $B(\cK_j) \cap (N\op)'$ whose restriction to $M$ is smaller or equal than $\tau$ and hence normal. So the conclusion follows from Lemma \ref{lem.left-amen-part}.
\end{proof}

\subsection{Weak amenability and class $\cS$} \label{sec.class-S-wa}

We very briefly introduce weak amenability and bi-exactness (class $\cS$) for countable groups. We only use these concepts in the following way: the first two families of groups in Theorem \ref{thm.main-intro} are weakly amenable and in class $\cS$, so that we can apply the results of \cite{PV12} to them.

Recall from \cite{CH88} that a countable group $\Gamma$ is called \emph{weakly amenable} if $\Gamma$ admits a sequence of finitely supported functions $f_n : \Gamma \recht \C$ tending to $1$ pointwise and satisfying $\sup_n \|f_n\|\cb < \infty$. Here $\|f\|\cb$ is the Herz-Schur norm, i.e.\ the cb-norm of the linear map $L \Gamma \recht L \Gamma : u_g \mapsto f(g) u_g$.

Following \cite{Oz03} (see also \cite[Chapter 15]{BO08}), a group $\Gamma$ is said to be in \emph{class $\cS$} (or bi-exact) if $\Gamma$ is an exact group and if there exists a map $\mu : \Gamma \recht \Prob \Gamma$ from $\Gamma$ to the probability measures on $\Gamma$ satisfying
$$
\lim_{k \recht \infty} \|\mu(g k h) - g \cdot \mu(k) \|_1 = 0 \quad\text{for all}\;\; g,h \in \Gamma \; .
$$
It immediately follows that if $\Gamma$ belongs to class $\cS$ and if $\Lambda < \Gamma$ is an infinite subgroup, then the centralizer of $\Lambda$ inside $\Gamma$ is amenable. Ozawa's theorem in \cite{Oz03} says that much more is true: if $Q \subset L \Gamma$ is any diffuse von Neumann subalgebra, then the relative commutant $Q' \cap L \Gamma$ is amenable.

\subsection{Property Gamma, inner amenability and McDuff II$_1$ factors}\label{sec.inner-amen}

Recall that a II$_1$ factor $M$ is said to have \emph{property Gamma,} if $M$ admits a sequence of unitaries $u_n \in M$ such that $\tau(u_n) = 0$ for all $n$ and $\lim_n \|u_n x - x u_n \|_2 = 0$ for all $x \in M$.

Let $G$ be an icc group and denote $M := L G$. By \cite{Ef73}, if $M$ has property Gamma, then $G$ must be \emph{inner amenable,} meaning that the unitary representation $(\Ad g)_{g \in G}$ on $\ell^2(G - \{e\})$ has almost invariant vectors: there exists a sequence of unit vectors $\xi_n \in \ell^2(G - \{e\})$ such that $\lim_n \|(\Ad g)(\xi_n) - \xi_n\|_2 = 0$ for every $g \in G$. The converse can however fail, as was shown in \cite{Va09}.

Denote by $R$ the unique hyperfinite II$_1$ factor. A II$_1$ factor $M$ is said to be \emph{McDuff} if $M$ is isomorphic with $M \ovt R$. Every McDuff II$_1$ factor has property Gamma. By \cite{McD69}, a II$_1$ factor $M$ is McDuff if and only if $M$ admits two central sequences of unitaries $u_n,v_n \in M$ such that $\tau(u_n) = \tau(v_n) = \tau(u_n v_n u_n^* v_n^*) = 0$ for all $n$.

For every II$_1$ factor $M$, we denote by $\Aut(M)$ the group of automorphisms of $M$, which naturally is a Polish group. We denote by $\Inn(M) := \{\Ad u \mid u \in \cU(M)\}$ the normal subgroup of inner automorphisms and by $\Out(M) := \Aut(M) / \Inn(M)$ the quotient group. Then $M$ is non-Gamma if and only if $\Inn(M)$ is closed in $\Aut(M)$. In that case, $\Out(M)$ naturally becomes a Polish group as well.

\subsection{Weakly mixing actions and weakly mixing representations}

Recall that a unitary representation $\pi : \Gamma \recht \cU(H)$ is called \emph{weakly mixing} if $\pi$ has no nonzero finite-dimensional globally $(\pi(g))_{g \in \Gamma}$-invariant subspaces.

Similarly, a probability measure preserving (pmp) action $\Gamma \actson (X,\mu)$ is called weakly mixing if the associated unitary representation $\Gamma \actson L^2(X) \ominus \C 1$ is weakly mixing. If $\Gamma \actson (X,\mu)$ is a pmp action, then the following conditions are equivalent:
\begin{itemize}
\item $\Gamma \actson (X,\mu)$ is weakly mixing,
\item the diagonal action $\Gamma \actson X \times X : g \cdot (x,y) = (g \cdot x, g \cdot y)$ is ergodic,
\item whenever $\Gamma \actson (Y,\eta)$ is a pmp action and $F : X \times Y \recht \C$ is a measurable function that is invariant under the diagonal action $\Gamma \actson X \times Y : g \cdot (x,y) = (g \cdot x, g \cdot y)$, we have that $F$ is a.e.\ equal to a function that only depends on the $Y$-variable.
\end{itemize}

The following lemma is classical (see e.g.\ \cite[Proposition 2.3 and Lemma 2.4]{PV06} for a simple proof).

\begin{lemma}\label{lem.prel-weak-mixing}
Assume that the countable group $\Gamma$ acts on the countable set $I$. Let $(X_0,\mu_0)$ be an arbitrary nontrivial standard probability space. Then the following conditions are equivalent.
\begin{itemize}
\item For every $i \in I$, the orbit $\Gamma \cdot i$ is infinite.
\item For every finite subset $\cF \subset I$, there exists a $g \in \Gamma$ such that $g \cdot \cF \cap \cF = \emptyset$.
\item The unitary representation $\Gamma \actson \ell^2(I)$ is weakly mixing.
\item The generalized Bernoulli action $\Gamma \actson (X_0,\mu_0)^I$ is weakly mixing.
\end{itemize}
\end{lemma}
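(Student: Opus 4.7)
The plan is to show $(1) \Leftrightarrow (2)$ first, then derive $(3)$ and $(4)$ from $(2)$, and close the circle by observing that a finite point orbit trivially obstructs each of the two weak-mixing statements. The backbone of all weak-mixing arguments is the standard characterization of weak mixing of a unitary representation $\pi$ of $\Gamma$: $\pi$ has no nonzero finite-dimensional invariant subspace iff for every finite family $\xi_1,\ldots,\xi_n$ of vectors and every $\eps > 0$ there exists $g \in \Gamma$ with $|\langle \pi(g)\xi_i,\xi_j\rangle| < \eps$ for all $i,j$.

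The only genuinely nontrivial implication is $(1) \Rightarrow (2)$, and it rests on B.H.\ Neumann's covering lemma: a group is never covered by finitely many cosets of subgroups of infinite index. Given a finite subset $\cF \subset I$, the ``bad'' set $\{g : g\cF \cap \cF \neq \emptyset\}$ equals $\bigcup_{i,j \in \cF}\{g : g \cdot i = j\}$, and each nonempty term here is a left coset of the stabilizer $\Stab_\Gamma(i)$, which has infinite index by (1). Neumann's lemma then furnishes a $g$ outside the union, so $g\cF \cap \cF = \emptyset$. The converse $(2) \Rightarrow (1)$ is immediate: if $\Gamma \cdot i$ is finite, take $\cF = \Gamma \cdot i$.

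For $(2) \Rightarrow (3)$, any finite family of vectors in $\ell^2(I)$ is approximated within $\eps$ by vectors supported on a common finite $\cF \subset I$. Applying (2) to $\cF$ yields $g$ with $g\cF \cap \cF = \emptyset$, whence the translated approximants are literally orthogonal to the originals, and the matrix-coefficient criterion above gives weak mixing. For $(2) \Rightarrow (4)$, one uses the orthogonal decomposition
\[
L^2\bigl((X_0,\mu_0)^I\bigr) \ominus \C 1 \;=\; \bigoplus_{\emptyset \neq \cF \subset I,\ |\cF|<\infty} H_0^{\otimes \cF}, \qquad H_0 := L^2(X_0,\mu_0) \ominus \C,
\]
under which $\Gamma$ sends the summand indexed by $\cF$ isometrically onto that indexed by $g \cdot \cF$. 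A vector is now approximated by one supported on the summands indexed by subsets of a single finite $S \subset I$; (2) applied to $S$ produces $g$ with $gS \cap S = \emptyset$, and the translated approximant sits in a disjoint family of summands, hence is orthogonal to the original.

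The reverse implications are trivial contrapositives. If $i \in I$ has a finite orbit $\Gamma \cdot i$, then $\ell^2(\Gamma \cdot i)$ is a nonzero finite-dimensional $\Gamma$-invariant subspace of $\ell^2(I)$, ruling out (3); and for any nonzero $f \in L^2(X_0,\mu_0) \ominus \C$, the symmetric function $x \mapsto \sum_{j \in \Gamma \cdot i} f(x_j)$ is a nonzero $\Gamma$-invariant vector in $L^2((X_0,\mu_0)^I) \ominus \C 1$, ruling out (4). The main obstacle is really just recognizing $(1) \Rightarrow (2)$ as an instance of Neumann's lemma; everything else is standard bookkeeping with the Fock-space decomposition of Bernoulli shifts and the matrix-coefficient characterization of weak mixing.
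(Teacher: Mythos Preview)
Your proof is correct. The paper does not actually prove this lemma; it states that the result is classical and refers the reader to \cite[Proposition 2.3 and Lemma 2.4]{PV06}. Your self-contained argument follows the standard route one would find there: Neumann's covering lemma for $(1)\Rightarrow(2)$, the matrix-coefficient criterion for weak mixing together with finite-support approximations for $(2)\Rightarrow(3)$ and the Fock-type decomposition for $(2)\Rightarrow(4)$, and direct construction of invariant subspaces or invariant vectors for the contrapositives of $(3)\Rightarrow(1)$ and $(4)\Rightarrow(1)$. Nothing is missing; in particular, your observation that the summands $f(x_j)$ for $j$ in a finite orbit are mutually orthogonal in the tensor decomposition is exactly what guarantees that the symmetric function you build is nonzero.
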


\section{Spectral gap rigidity for generalized Bernoulli actions}\label{sec.spectral-gap}

Let $G$ be a countable discrete group acting on a countable set $I$. Assume that $(A_0,\tau)$ is an arbitrary tracial von Neumann algebra. We denote by $A_0^I$ the tensor product, with respect to $\tau$, of copies of $A_0$ indexed by $I$. We let $G$ act on $A_0^I$ by the generalized Bernoulli action: denoting by $\pi_i : A_0 \recht A_0^I$ the embedding of $A_0$ as the $i$-th tensor factor, this generalized Bernoulli action $(\si_g)_{g \in G}$ is given by $\si_g \circ \pi_i = \pi_{g \cdot i}$ for all $g \in G$ and $i \in I$.
We consider the crossed product von Neumann algebra $M := A_0^I \rtimes G$. Whenever $\cF \subset I$, we write $\Stab \cF := \{g \in G \mid g \cdot i = i, \forall i \in \cF \}$.

In \cite{Po03,Po04}, Popa discovered his fundamental \emph{malleable deformation} for Bernoulli crossed products $M = A_0^G \rtimes G$ and used it to establish the first W$^*$-rigidity theorems in the case where $G$ has property (T). In \cite{Po06b}, Popa introduced his spectral gap methods to prove W$^*$-rigidity theorems for $A_0^G \rtimes G$ in the case where $G$ is a direct product of nonamenable groups. These methods and results have been generalized in many subsequent works (see e.g.\ \cite{PV06,Va07,Io10,IPV10}) and were in particular extended to cover certain \emph{generalized} Bernoulli actions, associated with general group actions $G \actson I$. So far, the spectral gap methods could only be employed under the assumption that $\Stab i$ is amenable for all $i \in I$ (see e.g.\ \cite[Corollary 4.3]{IPV10}). In this section, we show that it is actually sufficient to have a constant $\kappa > 0$ such that $\Stab \cF$ is amenable for all subsets $\cF \subset I$ with $|\cF| \geq \kappa$.

We use the following variant, due to \cite{Io06}, of Popa's malleable deformation for Bernoulli crossed products. Consider the free product $A_0 * L \Z$ with respect to the natural traces. Denote by $\Mtil := (A_0* L \Z)^I \rtimes G$ the corresponding generalized Bernoulli crossed product.

Define the self-adjoint $h \in L \Z$ with spectrum $[-\pi,\pi]$ such that $\exp(ih)$ equals the canonical generating unitary $u_1 \in L \Z$. Put $u_t := \exp(ith)$ and note that $u_t$ is a one-parameter group of unitaries with $|\tau(u_t)| < 1$ for all $t \neq 0$. As above we denote by $\pi_i : A_0 * L \Z \recht (A_0 * L \Z)^I$ the embedding as the $i$-th tensor factor. We can then define the malleable deformation $(\al_t)_{t \in \R}$ by automorphisms of $\Mtil$ given by $\al_t(u_g) = u_g$ and $\al_t(\pi_i(x)) = \pi_i(u_t x u_t^*)$ for all $g \in G$, $t \in \R$, $i \in I$ and $x \in A_0 * L \Z$.

Denote $\rho_t := |\tau(u_t)|^2$ and observe that $0 \leq \rho_t < 1$ for all $t \neq 0$. For every finite subset $\cF \subset I$, we denote by $\pi_\cF : A_0^\cF \recht A_0^I$ the natural embedding. Define the unital completely positive maps $\psi_t : M \recht M$ given by $\psi_t(x) = E_M(\al_t(x))$ for all $x \in M$. Whenever $a \in A_0^\cF$ is the elementary tensor given by $\dis a = \underset{i \in \cF}{\ot} a_i$ with $a_i \in A_0 \ominus \C 1$, we have
$$\psi_t(\pi_\cF(a) u_g) = \rho_{\mbox{}\hspace{-0.15ex}t}^{\mbox{}\,|F|} \; \pi_\cF(a) u_g \quad\text{for all}\;\; t \in \R, g \in G \; .$$
Therefore we consider the malleable deformation $(\al_t)_{t \in \R}$, and the corresponding completely positive maps $(\psi_t)_{t \in \R}$, as the \emph{tensor length deformation} of the generalized Bernoulli crossed product $M = A_0^I \rtimes G$.

\begin{theorem}\label{thm.spectral-gap-rigidity}
Let $G \actson I$ be an action of a countable group on a countable set. Assume that $\kappa,\ell > 0$ are integers and that $G_1,\ldots,G_\ell < G$ are subgroups with the following property: for every finite subset $\cF \subset I$ with $|\cF| \geq \kappa$, there exists an $i \in \{ 1,\ldots,\ell \}$ such that $\Stab \cF$ is amenable relative to $G_i$.

Assume that $(A_0,\tau)$ and $(N,\tau)$ are arbitrary tracial von Neumann algebras. Consider as above the generalized Bernoulli crossed product $M = A_0^I \rtimes G$ with its tensor length deformation $\al_t \in \Aut(\Mtil)$.

Assume that $p \in N \ovt M$ is a nonzero projection and that $P \subset p (N \ovt M) p$ is a von Neumann subalgebra such that for all nonzero projections $q \in P' \cap p (N \ovt M) p$ and all $i = 1,\ldots,\ell$, we have that $P q$ is nonamenable relative to $N \ovt (A_0^I \rtimes G_i)$.

Then
$$\sup_{b \in \cU(P' \cap p (N \ovt M) p)} \|(\id \ot \al_t)(b) - b \|_2 \quad\text{converges to $0$ as $t \recht 0$.}$$
\end{theorem}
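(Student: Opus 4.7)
I argue by contradiction along Popa's spectral-gap scheme. Suppose the conclusion fails: there exist $\delta > 0$, a sequence $t_n \to 0$, and unitaries $b_n \in \cU(P' \cap p(N\ovt M)p)$ with $\|\alpha_{t_n}(b_n) - b_n\|_2 \geq \delta$, writing $\alpha_t := \id \ot \al_t$. The tensor-length deformation $\al_t$ is malleable: the map $\beta = \bigotimes_i \beta_0$, where $\beta_0 \in \Aut(A_0 * L\Z)$ is trivial on $A_0$ and sends $h$ to $-h$, defines an involution of $\Mtil$ that fixes $M$ pointwise and satisfies $\beta \al_t \beta = \al_{-t}$. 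Popa's transversality inequality then gives
$$\|\alpha_{t_n}(b_n) - b_n\|_2 \;\leq\; 2\,\|\alpha_{t_n/2}(b_n) - E_{N\ovt M}(\alpha_{t_n/2}(b_n))\|_2 \, ,$$
so the vectors $\xi_n := \alpha_{t_n/2}(b_n) - E_{N\ovt M}(\alpha_{t_n/2}(b_n)) \in \cH := L^2(N \ovt \Mtil) \ominus L^2(N \ovt M)$ satisfy $\|\xi_n\|_2 \geq \delta/2$; since $b_n \in P'$ and $\alpha_t \to \id$ in $\|\cdot\|_2$ pointwise on $N \ovt M$, a direct computation gives $\|[x, \xi_n]\|_2 \to 0$ and $\|x \xi_n\|_2 \leq 2 \|x\|_2$ for every $x \in P$.

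Next I would analyze the $N \ovt M$-bimodule structure of $\cH$. Each vector in $L^2((A_0 * L\Z)^I) \ominus L^2(A_0^I)$ has a well-defined ``tensor support'' $\cF \subseteq I$ (the positions at which the tensor factor lies outside $A_0$); grouping by $G$-orbit yields an orthogonal $N \ovt M$-bimodule decomposition $\cH = \bigoplus_{\mathcal{O}} \cH_{\mathcal{O}}$ indexed by $G$-orbits $\mathcal{O}$ of finite nonempty subsets of $I$. A standard computation (as in \cite{PV06,IPV10}) identifies each $\cH_{\mathcal{O}}$ as weakly contained, as $N\ovt M$-bimodule, in the coarse bimodule $L^2(N \ovt M) \ot_{N \ovt (A_0^I \rtimes \Stab \cF_{\mathcal{O}})} L^2(N \ovt M)$. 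Write $\cH = \cH_{\text{large}} \oplus \cH_{\text{small}}$ according to whether $|\cF_{\mathcal{O}}| \geq \kappa$ or $|\cF_{\mathcal{O}}| < \kappa$. Using the spectral identity $E_{N \ovt M}(\alpha_t(x)) = \rho_t^{|\cF|} x$ for $x$ of tensor-support exactly $\cF$, combined with a careful analysis of how the mass of $\xi_n$ distributes across orbits, one extracts -- after passing to a subsequence and, if necessary, truncating $b_n$ to an $A_0^I$-tensor-support of bounded size -- the projection $\xi_n'$ of $\xi_n$ onto $\cH_{\text{large}}$ with $\|\xi_n'\|_2$ still bounded below and almost-$P$-central (the orbit decomposition being $N \ovt M$-bimodular).

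Finally, Lemma \ref{lem.left-amen-sequence} applied to $(\xi_n')$ yields a nonzero projection $q \in P' \cap p(N \ovt M)p$ with $q\cH_{\text{large}}$ left $Pq$-amenable. For each orbit $\mathcal{O}$ contributing to $\cH_{\text{large}}$, the hypothesis supplies some $i(\mathcal{O}) \in \{1, \ldots, \ell\}$ such that $\Stab \cF_{\mathcal{O}}$ is amenable relative to $G_{i(\mathcal{O})}$; grouping orbits by this choice writes $\cH_{\text{large}} = \bigoplus_{i=1}^{\ell} \cH_i$ as a finite direct sum, and Lemma \ref{lem.left-amen-direct-sum} extracts an $i$ and a nonzero subprojection $q' \leq q$ with $q'\cH_i$ left $Pq'$-amenable. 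Lemma \ref{lem.rel-amen-subgroups} then promotes the weak containment of each $\cH_{\mathcal{O}} \subset \cH_i$ in the coarse bimodule over $N \ovt (A_0^I \rtimes \Stab \cF_{\mathcal{O}})$ to weak containment of $\cH_i$ in the coarse bimodule over $N \ovt (A_0^I \rtimes G_i)$; since weak containment preserves left amenability, $Pq'$ is amenable relative to $N \ovt (A_0^I \rtimes G_i)$, contradicting the hypothesis. The technical heart is the second paragraph: for $\kappa \geq 2$ the ``small-support'' pieces $\cH_{\text{small}}$ a priori carry nonnegligible $L^2$-mass of $\xi_n$, and controlling this contribution -- precisely the step where the improvement from ``all nonempty $\cF$'' (the standard hypothesis for generalized Bernoulli spectral gap) to ``$|\cF| \geq \kappa$'' enters -- requires a finer argument than the straightforward spectral decay $\|\xi_n^{\text{small}}\|_2^2 \leq 1 - \rho_{t_n/2}^{2(\kappa-1)}$ that would suffice when $\kappa = 1$.
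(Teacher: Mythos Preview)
Your overall architecture matches the paper's: argue by contradiction, produce nearly $P$-central vectors in the sub-bimodule $\cK^\kappa\subset L^2(N\ovt\Mtil)$ spanned by elementary tensors having at least $\kappa$ factors in $(A_0*L\Z)\ominus A_0$ (your $\cH_{\text{large}}$), apply Lemma~\ref{lem.left-amen-sequence}, and then combine the bimodule description of $\cK^\kappa$ with Lemmas~\ref{lem.rel-amen-subgroups} and~\ref{lem.left-amen-direct-sum} to manufacture the forbidden relative amenability. Your third paragraph is essentially Lemma~\ref{lem.our-weak-containment} together with \cite[Proposition~2.4]{PV11}.

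The genuine gap is exactly the one you flag in your last sentence. You first apply the \emph{standard} transversality inequality to obtain vectors $\xi_n$ in $L^2(N\ovt\Mtil)\ominus L^2(N\ovt M)$ with $\|\xi_n\|_2\geq\delta/2$, and then assert that the projection $\xi_n'$ onto $\cK^\kappa$ still has norm bounded below. But this cannot be extracted by a single-scale comparison: writing $P_m$ for the orthogonal projection of $L^2(N\ovt M)$ onto the tensor-length-$m$ subspace, one has
\[
\|P_{\cK^\kappa}((\id\ot\al_t)(b))\|_2^2=\sum_m(1-c_\kappa(t,m))\,\|P_m(b)\|_2^2,\qquad c_\kappa(t,m)=\sum_{j<\kappa}\tbinom{m}{j}(1-\rho_t^2)^j\rho_t^{2(m-j)},
\]
whereas $\|\xi_n\|_2^2$ is governed by the coefficients $1-\rho_t^{2m}$. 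The ratio $(1-c_\kappa(t,m))/(1-\rho_t^{2m})$ is \emph{not} bounded below uniformly in $(t,m)$ --- when $\lambda:=m(1-\rho_t^2)$ is small it behaves like $\lambda^{\kappa-1}/\kappa!$ --- so a lower bound on $\|\xi_n\|_2$ at time $t_n/2$ gives no lower bound on $\|\xi_n'\|_2$ at the same time. Your suggested remedy of ``truncating $b_n$ to bounded tensor-support'' destroys both unitarity and asymptotic $P$-centrality of $b_n$, so it does not salvage the argument.

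The paper bypasses the standard transversality altogether and proves instead a transversality tailored to $\cK^\kappa$: if $\sup_b\|P_{\cK^\kappa}((\id\ot\al_t)(b))\|_2\to0$ as $t\to0$, then $\sup_b\|(\id\ot\al_s)(b)-b\|_2\to0$. The proof is a \emph{two-scale} argument. Fix $t$ with $\|P_{\cK^\kappa}((\id\ot\al_t)(b))\|_2<\eps$ for all $b$; since $c_\kappa(t,m)\to0$ as $m\to\infty$ for this fixed $t$, the displayed formula forces a uniform tensor-length tail bound $\sum_{m\geq m_0}\|P_m(b)\|_2^2<2\eps^2$ valid for all $b$ simultaneously. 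This tail bound, which no longer involves $t$, then controls $\|(\id\ot\al_s)(b)-b\|_2^2=\sum_m 2(1-\rho_s^m)\|P_m(b)\|_2^2$ for \emph{every} sufficiently small $s$. Taking the contrapositive yields directly a sequence $b_n$ with $\|P_{\cK^\kappa}((\id\ot\al_{t_n})(b_n))\|_2\geq\eps$, and your endgame applies verbatim to $\xi_n:=P_{\cK^\kappa}((\id\ot\al_{t_n})(b_n))$ without the preliminary detour through $L^2(N\ovt\Mtil)\ominus L^2(N\ovt M)$.
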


Put $\cM := N \ovt M$ and $\cMtil := N \ovt \Mtil$.
The proof of Theorem \ref{thm.spectral-gap-rigidity} follows closely the proofs of \cite[Lemma 5.1]{Po06b} and \cite[Corollary 4.3]{IPV10}. The essential difference is that we replace the bimodule $\bim{\cM}{\rL^2(\cMtil \ominus \cM)}{\cM}$ by the following $\cM$-$\cM$-submodule
\begin{equation}\label{eq.Kkappa}
\cK^\kappa := \overline{\lspan} \left\{ \; x \ot \pi_\cF(a) u_g \; \middle| \; \parbox{8cm}{$x \in N$, $g \in G$, $\cF \subset I$ with $\kappa \leq |\cF| < \infty$,\vspace{0.4ex}\\ $a = \underset{i \in \cF}{\ot} a_i$ with $a_i \in A_0 * L \Z$ for all $i$ and with $a_i \in A_0 * L \Z \ominus A_0$ for at least $\kappa$ elements $i \in \cF$} \; \right\} \; .
\end{equation}

Before proving Theorem \ref{thm.spectral-gap-rigidity}, we need the following lemma.

\begin{lemma}\label{lem.our-weak-containment}
Under the assumptions of Theorem \ref{thm.spectral-gap-rigidity}, put $\cM_i := N \ovt (A_0^I \rtimes G_i)$. Then there exist $\cM_i$-$\cM$-bimodules $\cH_i$ such that the $\cM$-$\cM$-bimodule $\cK^\kappa$ is weakly contained in the $\cM$-$\cM$-bimodule $\bigoplus_{i=1}^\ell \bigl(L^2(\cM) \ot_{\cM_i} \cH_i\bigr)$.
\end{lemma}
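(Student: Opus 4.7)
The plan is to decompose $\cK^\kappa$ as a direct sum of $\cM$-$\cM$-sub-bimodules indexed by $G$-orbits of finite subsets of $I$, and then, for each orbit, to use the hypothesis that the stabilizer is amenable relative to some $G_i$ in order to produce the required weak containment.

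First I set $B := A_0 * L\Z$ and observe that $B^\circ := B \ominus A_0$ is an $A_0$-sub-bimodule of $B$, because left and right multiplication by $A_0$ commute with the conditional expectation $E_{A_0} : B \recht A_0$. This yields a Fock-type decomposition
$$L^2(B^I) = \bigoplus_{\cJ \subset I \text{ finite}} H^\cJ, \qquad H^\cJ := L^2(A_0^{I \setminus \cJ}) \ot L^2(B^\circ)^{\ot \cJ},$$
in which each summand is $A_0^I$-bimodule invariant, while the $G$-action merely permutes the summands according to its action on finite subsets of $I$. Inspecting the definition of $\cK^\kappa$ (the condition $|\cF| \geq \kappa$ with at least $\kappa$ tensor factors in $B^\circ$) then gives
$$\cK^\kappa = \bigoplus_{\cO} \cV_\cO, \qquad \cV_\cO := L^2(N) \ot \Bigl(\bigoplus_{\cJ \in \cO} H^\cJ\Bigr) \ot \ell^2(G),$$
where $\cO$ runs over $G$-orbits of finite subsets of $I$ of cardinality at least $\kappa$, and every $\cV_\cO$ is an $\cM$-$\cM$-sub-bimodule of $L^2(\cMtil)$.

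Next, for every orbit $\cO$ I pick a base point $\cF_\cO \in \cO$, set $\Lambda_\cO := \Stab \cF_\cO$ and use the hypothesis to choose $i(\cO) \in \{1, \ldots, \ell\}$ such that $\Lambda_\cO$ is amenable relative to $G_{i(\cO)}$. Writing $\cM_{\Lambda_\cO} := N \ovt (A_0^I \rtimes \Lambda_\cO)$, the subspace $\cK_\cO := L^2(N) \ot H^{\cF_\cO} \ot \ell^2(\Lambda_\cO) \subset L^2(\cM_{\Lambda_\cO})$ is an $\cM_{\Lambda_\cO}$-$\cM_{\Lambda_\cO}$-sub-bimodule (since $\Lambda_\cO$ globally fixes $\cF_\cO$), and a choice of coset representatives for $G/\Lambda_\cO$ gives a natural $\cM$-$\cM$-bimodule identification
$$\cV_\cO \;\cong\; L^2(\cM) \ot_{\cM_{\Lambda_\cO}} \cK_\cO \ot_{\cM_{\Lambda_\cO}} L^2(\cM),$$
so $\cV_\cO$ is an induced bimodule from $\cM_{\Lambda_\cO}$.

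Finally, I apply Lemma \ref{lem.rel-amen-subgroups} to convert relative amenability of $\Lambda_\cO$ with respect to $G_{i(\cO)}$ into relative amenability of $\cM_{\Lambda_\cO}$ with respect to $\cM_{i(\cO)}$ inside $\cM$. By the bimodule characterization of relative amenability, this means that the $\cM$-$\cM$-bimodule $L^2(\cM) \ot_{\cM_{i(\cO)}} L^2(\cM)$ is left $\cM_{\Lambda_\cO}$-amenable. A standard averaging procedure, using a $\Lambda_\cO$-invariant mean on $\ell^\infty(G/G_{i(\cO)})$, then implies that for any $\cM_{\Lambda_\cO}$-$\cM$-bimodule $\cL$, the induced $\cM$-$\cM$-bimodule $L^2(\cM) \ot_{\cM_{\Lambda_\cO}} \cL$ is weakly contained in $L^2(\cM) \ot_{\cM_{i(\cO)}} \bigl(L^2(\cM) \ot_{\cM_{\Lambda_\cO}} \cL\bigr)$. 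Applied with $\cL := \cK_\cO \ot_{\cM_{\Lambda_\cO}} L^2(\cM)$, this yields weak containment of $\cV_\cO$ in $L^2(\cM) \ot_{\cM_{i(\cO)}} \cH_\cO'$ for a suitable $\cM_{i(\cO)}$-$\cM$-bimodule $\cH_\cO'$. Setting $\cH_i := \bigoplus_{\cO : i(\cO) = i} \cH_\cO'$ and taking direct sums over all orbits concludes the proof. The main obstacle is this last step: the passage from group-theoretic relative amenability to the explicit weak containment of induced bimodules, which amounts to turning the invariant mean on $\ell^\infty(G/G_{i(\cO)})$ into an $\cM_{\Lambda_\cO}$-central functional on the relevant basic construction, and threading it through the induction functor with the correct normalizations so that a countable direct sum of weak containments remains a weak containment.
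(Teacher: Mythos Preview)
Your argument follows the same strategy as the paper: decompose $\cK^\kappa$ into sub-bimodules that are induced from intermediate subalgebras $\cM_\Lambda$ with $\Lambda$ amenable relative to some $G_i$, and then invoke the bimodule characterization of relative amenability. The paper's decomposition is finer than yours---it further refines each $H^{\cF_\cO}$ via an orthogonal $A_0$-bimodule decomposition $B^\circ = \bigoplus_{b \in \cB} \overline{A_0 b A_0}$, so that each piece becomes a cyclic bimodule $\cK^c = \overline{\cM c \cM} \cong L^2(\cM) \otimes_{Q_n} L^2(\cM)$ with $Q_n = N \ovt (A_0^{I \setminus \cF_n} \rtimes \Lambda_n)$ and $\Lambda_n$ the stabilizer of the \emph{decorated} set $(\cF_n,(c_i))$; this dispenses with your internal bimodule $\cK_\cO$ and reduces the last step to a direct citation of \cite[Proposition~2.4]{PV11} rather than the averaging argument you sketch.

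One small slip to fix: with $\Lambda_\cO = \Stab \cF_\cO$ taken as the \emph{pointwise} stabilizer (which is what the hypothesis of Theorem~\ref{thm.spectral-gap-rigidity} controls), your claimed identification of $\cV_\cO$ as $L^2(\cM) \otimes_{\cM_{\Lambda_\cO}} \cK_\cO \otimes_{\cM_{\Lambda_\cO}} L^2(\cM)$ overcounts by the index of $\Lambda_\cO$ in the \emph{setwise} stabilizer of $\cF_\cO$, since it is the latter that parametrizes the orbit $\cO$. This is harmless for the weak-containment conclusion (a direct summand of a weakly contained bimodule is weakly contained), or is cured by replacing $\Lambda_\cO$ with the setwise stabilizer, which is still amenable relative to $G_{i(\cO)}$ as it contains $\Stab \cF_\cO$ with finite index---exactly the observation the paper also makes about its $\Lambda_n$.
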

\begin{proof}
Let $u\in L\Z$ be the canonical generating unitary. Let $\cA \subset A_0\ominus \C 1$ be an orthonormal basis of $L^2(A_0)\ominus\C 1$. Define
$\cB \subset A_0*L\Z$ given by
$$\cB:=\{u^{n_1}a_1u^{n_2}a_2\cdots u^{n_{k-1}}a_{k-1}u^{n_k}\mid k\geq 1,\;\;\text{and for all $j$},\;\; n_j\in\Z - \{0\},\; a_j\in \cA \} \; .$$
By construction, we have the following orthogonal decomposition of $L^2(A_0*L\Z)$ into $A_0$-$A_0$-subbimodules:
\begin{equation*}
L^2(A_0*L\Z)=L^2(A_0)\oplus \bigoplus_{b\in\cB}{\ol{A_0bA_0}} \; .
\end{equation*}
Fix $\cF\subset I$ finite, with $\abs{\cF}\geq \kappa$, and fix for all $i \in \cF$, $c_i\in \cB$. Denote
$$c:=1\otimes \pi_\cF\Bigl(\underset{i\in\cF}{\ot} c_i\Bigr)\in N \ovt (A_0*L\Z)^I \; .$$
Define the $\cM$-$\cM$-subbimodule of $\cK^\kappa$ given by $\cK^c := \ol{\cM c \cM}$. Define the subgroup $\Lambda < G$ given by
\begin{equation}\label{eq.form-Lambda}
\Lambda := \{g \in G \mid g \cdot \cF = \cF , c_{g \cdot i} = c_i \;\;\text{for all}\;\; i \in \cF \} \; .
\end{equation}
The formula $x \ot y \mapsto x c y$ defines an $\cM$-$\cM$-bimodular unitary between $\rL^2(\cM) \ot_Q \rL^2(\cM)$ and $\cK^c$ with $Q:= N \ovt (A_0^{I - \cF} \rtimes \Lambda)$. The different $\cK^c$ span a dense subspace of $\cK^\kappa$. Also, if $\cF,c$ and $\cF',c'$ are chosen as above, there are two possibilities: either there exists a $g \in G$ such that $\cF' = g \cdot \cF$ and $c'_{g \cdot i} = c_i$ for all $i \in \cF$, or such a $g \in G$ does not exist. In the first case, we have $\cK^c = \cK^{c'}$, while in the second case, we have $\cK^c \perp \cK^{c'}$.

Altogether we can choose a sequence of $c$'s as above, denoted $c_n$, such that $\cK^\kappa$ is the orthogonal direct sum of its subbimodules $\cK^{c_n}$. To each $c_n$ corresponds a finite subset $\cF_n \subset I$ satisfying $|\cF_n| \geq \kappa$, and a subgroup $\Lambda_n < G$ given by \eqref{eq.form-Lambda}. Note that by \eqref{eq.form-Lambda}, we get that $\Stab \cF_n$ is a finite index subgroup of $\Lambda_n$. Writing $Q_n = N \ovt (A_0^{I - \cF_n} \rtimes \Lambda_n)$, we conclude that $\cK^\kappa$ is isomorphic to the direct sum of the sequence of $\cM$-$\cM$-bimodules $L^2(\cM) \ot_{Q_n} L^2(\cM)$.

By the assumptions of the lemma, for every $n$, there exists an $i(n) \in \{1,\ldots,\ell\}$ such that $\Stab \cF_n$ is amenable relative to $G_{i(n)}$ inside $G$. Since $\Stab \cF_n < \Lambda_n$ has finite index, also $\Lambda_n$ is amenable relative to $G_{i(n)}$ inside $G$. It then follows from Lemma \ref{lem.rel-amen-subgroups} that $N \ovt (A_0^I \rtimes \Lambda_n)$ is amenable relative to $\cM_{i(n)}$. A fortiori, $Q_n$ is amenable relative to $\cM_{i(n)}$. By \cite[Proposition 2.4.3]{PV11}, this means that $\bim{\cM}{L^2(\cM)}{Q_n}$ is weakly contained in $\bim{\cM}{\bigl(L^2(\cM) \ot_{\cM_{i(n)}} L^2(\cM)\bigr)}{Q_n}$. Defining $\cH_i$ as the direct sum of all $L^2(\cM) \ot_{Q_n} L^2(\cM)$ with $i(n) = i$, it follows that $\cK^\kappa$ is weakly contained in $\bigoplus_{i=1}^\ell \bigl(L^2(\cM) \ot_{\cM_i} \cH_i\bigr)$ as an $\cM$-$\cM$-bimodule.
\end{proof}

\begin{proof}[{\bf Proof of Theorem \ref{thm.spectral-gap-rigidity}}]
Denote by $P_{\cK^\kappa}$ the orthogonal projection of $\rL^2(\cMtil)$ onto the closed subspace $\cK^\kappa$ that we defined in \eqref{eq.Kkappa}. Denote $\cU := \cU(P' \cap p (N \ovt M)p)$. We start by proving the following claim that is a variant of Popa's fundamental transversality property in \cite[Lemma 2.1]{Po06b}.

{\bf Claim.} If $\dis\sup_{b \in \cU} \|P_{\cK^\kappa}((\id \ot \al_t)(b))\|_2 \recht 0$ when $t \recht 0$, then also $\dis\sup_{b \in \cU} \|(\id \ot \al_t)(b) - b \|_2 \recht 0$ when $t \recht 0$.

To prove the claim, we first determine a formula for $\|P_{\cK^\kappa}(\id \ot \al_t)(y)\|_2$ when $y \in \cM$. For every $n \geq 0$, define the closed subspace $\cH_n \subset L^2(\cM)$ as
$$\cH_n := \overline{\lspan} \left\{ \; x \ot \pi_\cF(a) u_g \; \middle| \; \parbox{8cm}{$x \in N$, $g \in G$, $\cF \subset I$ finite, $|\cF| = n$, $a = \underset{i \in \cF}{\ot} a_i$ with $a_i \in A_0 \ominus \C 1$ for all $i \in \cF$} \; \right\} \; .$$
Observe that $L^2(\cM)$ is the orthogonal direct sum of the $\cH_n$. Denote by $P_n$ the orthogonal projection of $L^2(\cM)$ onto $\cH_n$.

Fix a finite subset $\cF \subset I$ with $|\cF| \geq \kappa$ and fix, for all $i \in \cF$, elements $a_i \in A_0 \ominus \C 1$. Put $a = \underset{i \in \cF}{\ot} a_i$. For all $x \in N$ and all $g \in G$, we have
\begin{align*}
x \ot \al_t(\pi_\cF(a) u_g) &= x \ot \pi_\cF\Bigl( \underset{i \in \cF}{\ot} u_t a_i u_t^*\Bigr) \; u_g \\
&= \sum_{\cG \subset \cF} \; x \ot \; \pi_\cG\Bigl(\underset{i \in \cG}{\ot} (u_t a_i u_t^* - \rho_t a_i)\Bigr) \; \pi_{\cF - \cG}\Bigl( \underset{i \in \cF - \cG}{\ot} \rho_t a_i \Bigr) \; u_g \; .
\end{align*}
In this last sum, the term corresponding to $\cG \subset \cF$ belongs to $\cK^\kappa$ if $|\cG| \geq \kappa$, and is orthogonal to $\cK^\kappa$ if $|\cG| < \kappa$. Therefore, we have for all $x \in N$ and all $g \in G$ that
$$
(1 - P_{\cK^\kappa})(x \ot \al_t(\pi_\cF(a) u_g)) \\ = \sum_{\parbox[t]{1cm}{\scriptsize $\cG \subset \cF$, \\ $|\cG| < \kappa$}} \; x \ot \; \pi_\cG\Bigl(\underset{i \in \cG}{\ot} (u_t a_i u_t^* - \rho_t a_i)\Bigr) \; \pi_{\cF - \cG}\Bigl( \underset{i \in \cF - \cG}{\ot} \rho_t a_i \Bigr) \; u_g \; .
$$
Put $y = x \ot \pi_\cF(a) u_g$ and assume that $y' = x' \ot \pi_{\cF'}(a') u_{g'}$ is of a similar form. Since
$$\langle u_t a u_t^* - \rho_t a , u_t b u_t^* - \rho_t b \rangle = (1-\rho_t^2) \, \tau(b^*a) \quad\text{for all}\;\; a,b \in A_0 \ominus \C 1 \; ,$$
we get that
$$\langle (1-P_{\cK^\kappa}) (\id \ot \al_t)(y) , (1-P_{\cK^\kappa})(\id \ot \al_t)(y') \rangle = \langle y, y' \rangle \; \sum_{j=0}^{\kappa-1} \bigl(\begin{smallmatrix} |\cF| \\ j \end{smallmatrix}\bigr) \, (1-\rho_t^2)^j \, \rho_t^{2(|\cF| - j)} \; ,$$
with both sides being zero if $\cF \neq \cF'$. We conclude that for all $y \in \cM$, we have
$$\|(1-P_{\cK^\kappa}) (\id \ot \al_t)(y)\|_2^2 = \sum_{n=0}^\infty c_\kappa(t,n) \, \|P_n(y)\|_2^2$$
where
$$c_\kappa(t,n) = \sum_{j=0}^{\min(\kappa - 1,n)} \bigl(\begin{smallmatrix} n \\ j \end{smallmatrix}\bigr) \, (1-\rho_t^2)^j \, \rho_t^{2(n-j)} \; .$$
Note that $c_\kappa(t,n) = 1$ if $n < \kappa$. It follows that
\begin{equation}\label{eq.ourformula}
\|P_{\cK^\kappa}(\al_t(y))\|_2^2 = \sum_{n=0}^\infty (1-c_\kappa(t,n)) \, \|P_n(y)\|_2^2 \quad\text{for all}\;\; y \in \cM \; .
\end{equation}

To prove the claim, assume that
$$\sup_{b \in \cU} \|P_{\cK^\kappa}(\id \ot \al_t)(b)\|_2 \recht 0 \quad\text{when}\quad t \recht 0 \; .$$
Choose $\eps > 0$. Take $t > 0$ such that $\|P_{\cK^\kappa}(\id \ot \al_t)(b)\|_2 < \eps$ for all $b \in \cU$. Since $c_\kappa(t,n) \recht 0$ when $n \recht \infty$ and $t$ is fixed, we can take $n_0$ such that $c_\kappa(t,n) < 1/2$ for all $n \geq n_0$. It then follows from \eqref{eq.ourformula} that for all $b \in \cU$, we have
\begin{equation}\label{eq.tussenstap}
\eps^2 > \|P_{\cK^\kappa}(\id \ot \al_t)(b)\|_2^2 \geq \frac{1}{2} \sum_{n=n_0}^\infty \|P_n(b)\|_2^2 \; .
\end{equation}
We finally take $s_0 > 0$ such that $1-\rho_s^n < \eps^2$ for all $|s| < s_0$ and all $0 \leq n < n_0$. Using \eqref{eq.tussenstap}, it follows that for all $b \in \cU$ and all $|s| < s_0$, we have
\begin{align*}
\|(\id \ot \al_s)(b) - b\|_2^2 & = \sum_{n=0}^\infty 2(1-\rho_s^n) \, \|P_n(b)\|_2^2 \\
& \leq \sum_{n=0}^{n_0-1} 2 \eps^2 \, \|P_n(b)\|_2^2 + 2\sum_{n=n_0}^\infty \|P_n(b)\|_2^2 \\
& \leq 2 \eps^2 + 4 \eps^2 \; .
\end{align*}
So, $\|(\id \ot \al_s)(b) - b \|_2 \leq 3\eps$ for all $|s| < s_0$ and all $b \in \cU$. This proves the claim.

To prove the theorem, assume that $\sup\{\|(\id \ot \al_t)(b) - b \|_2 \mid b \in \cU\}$ does not tend to $0$ as $t \recht 0$. We will produce a nonzero projection $q \in P' \cap p \cM p$ and a $j \in \{1,\ldots,\ell\}$ such that $P q$ is amenable relative to $\cM_j$. This will conclude the proof of the theorem.

By the claim above, we find an $\eps > 0$, a $t_0 > 0$, and for every $0 < t < t_0$, a unitary $b_t \in \cU$ such that $\|P_{\cK^\kappa}(\id \ot \al_t)(b_t)\|_2 \geq \eps$. Define $\xi_t := P_{\cK^\kappa}(\id \ot \al_t)(b_t)$. We have $\|\xi_t\|_2 \geq \eps$ for all $0 < t < t_0$. For every fixed $x \in P$, we have that $\|x \xi_t - \xi_t x\|_2 \recht 0$ as $t \recht 0$. We finally have $\|x \xi_t\|_2 \leq \|x\|_2$ for all $x \in \cM$. So Lemma \ref{lem.left-amen-sequence} provides a nonzero projection $q \in P' \cap p \cM p$ such that the $q \cM q$-$\cM$-bimodule $q \cK^\kappa$ is left $Pq$-amenable. Using \cite[Corollary 2.5]{PV11} and Lemma \ref{lem.our-weak-containment},
we find $\cM_j$-$\cM$-bimodules $\cH_j$ such that $\bigoplus_{j=1}^\ell q L^2(\cM) \ot_{\cM_j} \cH_j$ is left $Pq$-amenable. Making $q \in P' \cap p \cM p$ smaller, Lemma \ref{lem.left-amen-direct-sum} yields a $j \in \{1,\ldots,\ell\}$ such that $q L^2(\cM) \ot_{\cM_j} \cH_j$ is a left $Pq$-amenable bimodule. By \cite[Proposition 2.4.4]{PV11}, the $q\cM q$-$\cM_j$-bimodule $q L^2(\cM)$ is left $Pq$-amenable. This precisely means that $P q$ is amenable relative to $\cM_j$.
\end{proof}

We also need the following variant of \cite[Theorem 4.1]{Po03} and its subsequent generalizations in \cite[Theorem 2.1]{Io10} and \cite[Theorem 4.2]{IPV10}. Since our proof is almost identical, we are rather brief.

\begin{theorem}\label{thm.actual-conjugacy}
Let $G \actson I$ be an action of a countable group on a countable set. Assume that $(A_0,\tau)$ and $(N,\tau)$ are arbitrary tracial von Neumann algebra. Consider as above the generalized Bernoulli crossed product $M = A_0^I \rtimes G$ with its tensor length deformation $\al_t \in \Aut(\Mtil)$.

Assume that $p \in N \ovt M$ is a nonzero projection and that $Q \subset p (N \ovt M) p$ is a von Neumann subalgebra generated by a group of unitaries $\cG \subset \cU(Q)$ with the property that
$$\sup_{b \in \cG} \|(\id \ot \al_t)(b) - b \|_2 \quad\text{converges to $0$ as $t \recht 0$.}$$
If $G$ is icc, if $N$ is a factor and if for all $i \in I$, we have that $Q \not\prec N \ovt (A_0^I \rtimes \Stab i)$, then there exists a partial isometry $v \in N \ovt M$ with $vv^* = p$ and $v^* Q v \subset N \ovt L G$.
\end{theorem}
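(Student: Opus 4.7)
The plan is to adapt Popa's Deformation--Conjugacy strategy, following \cite[Theorem 4.1]{Po03}, \cite[Theorem 2.1]{Io10} and \cite[Theorem 4.2]{IPV10}. Write $\cM := N \ovt M$, $\cMtil := N \ovt \Mtil$, and $\beta_t := \id \ot \al_t$ for brevity. First, the uniform convergence hypothesis combined with Popa's transversality property \cite[Lemma 2.1]{Po06b} applied to the one-parameter deformation $(\beta_t)$ implies that for every $\eps>0$ there exists $t_0>0$ with $\|\psi_{2t_0}(b) - b\|_2 < \eps$ for every $b \in \cG$, where $\psi_s := E_{\cM}\circ\beta_s$. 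Choosing $\eps$ small enough and applying intertwining-by-bimodules (Theorem \ref{thm.intertwining}) to the group of unitaries $\cG \subset \cU(Q)$ yields a nonzero partial isometry $v_0 \in p \cMtil \beta_{2t_0}(p)$ satisfying $b v_0 = v_0 \beta_{2t_0}(b)$ for all $b \in Q$.

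Second, one upgrades the time parameter from $2t_0$ to $1$ using the s-malleability of the deformation: there is a period-two automorphism $\gamma$ of $\Mtil$ fixing $M$ pointwise and satisfying $\gamma \al_s \gamma = \al_{-s}$ for every $s$ (explicitly, $\gamma$ is the identity on each $A_0$-factor and sends the generator $u \in L\Z$ to $u^*$ in each tensor coordinate). Applying $\id \ot \gamma$ to $v_0$ produces an intertwiner of $Q$ into $\beta_{-2t_0}(\cM)$; combining the two via the Popa--Ioana ``reflecting'' trick and iterating dyadically, one arrives at a nonzero partial isometry $v_1 \in p \cMtil \beta_1(p)$ with $b v_1 = v_1 \beta_1(b)$ for every $b \in Q$. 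A maximality argument, relying on $G$ being icc and $N$ being a factor, then allows us to take $v_1 v_1^* = p$.

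The third and most delicate step is to descend from $\beta_1(\cM)$ to $N\ovt LG$. For this one decomposes $L^2(\cMtil)$ orthogonally as an $\cM$--$\cM$-bimodule using the reduced-word decomposition of the free product $A_0 * L\Z$ in each coordinate $i \in I$, analogously to the construction of $\cK^\kappa$ in \eqref{eq.Kkappa}. Each nontrivial summand of $L^2(\beta_1(\cM))$ is indexed by a nonempty finite set $\cF \subset I$ of ``active'' coordinates together with reduced-word data on $\cF$, and on such a summand the right $\cM$-action essentially factors through $N\ovt(A_0^I\rtimes\Stab\cF)$. The hypothesis $Q \not\prec_\cM N\ovt(A_0^I\rtimes\Stab i)$ for every $i \in I$, combined with Lemma \ref{lem.prel-weak-mixing} and the weak mixing of the $G$-action on the decoration set, forces the component of $v_1$ in every nontrivial summand to vanish. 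Consequently $v_1 \in L^2(N\ovt LG)$, so $v_1 \in \cM$, and one obtains the desired partial isometry $v$ with $vv^* = p$ and $v^*Qv \subset N\ovt LG$. The main obstacle is exactly this last bimodule/weak-mixing analysis: one must identify which stabiliser subalgebra governs each summand and use the non-intertwining hypothesis to kill every nontrivial contribution, with the icc-ness of $G$ and factoriality of $N$ ensuring that the final corner extension $vv^* = p$ can be achieved.
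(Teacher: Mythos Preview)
Your overall architecture follows the paper's strategy (which in turn follows \cite{Po03,Io10,IPV10}), but your steps 2 and 3 conflate several distinct arguments and the conclusion of step~3 is not what the analysis actually yields.

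First, the maximality argument cannot be performed at the stage you place it. To upgrade the time-$2t_0$ intertwiner to time~$1$ via the reflecting trick, one must first know that $v_0 v_0^* \in \cM$ and $v_0^* v_0 \in \beta_{2t_0}(\cM)$ (not merely in $\cMtil$); this is a nontrivial consequence of the hypothesis $Q \not\prec N\ovt(A_0^I\rtimes\Stab i)$ via \cite[Lemma 4.1.1]{IPV10}, which you do not invoke. Even granting this, there is no reason one can arrange $v_1 v_1^* = p$ at the $\cMtil$ level; the full-corner maximality argument belongs at the very end, after one has descended to $N\ovt LG$.

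Second, and more seriously, the bimodule analysis in your step~3 does \emph{not} show that the time-$1$ intertwiner $v_1$ itself lies in $L^2(N\ovt LG)$. What the decomposition actually produces (following the argument of \cite[Theorem 4.2]{IPV10}) is an intertwining statement inside $\cM$: one finds a finite, possibly empty, $\cF\subset I$ with $Q \prec_{\cM} N\ovt(A_0^{\cF}\rtimes\Stab\cF)$, and the non-intertwining hypothesis then forces $\cF=\emptyset$, i.e.\ $Q\prec N\ovt LG$. This is only an embedding in Popa's sense, not a unitary conjugacy. To pass from $Q\prec N\ovt LG$ to a genuine partial isometry $v\in\cM$ with $vv^*=p$ and $v^*Qv\subset N\ovt LG$, the paper needs a separate argument: take the intertwiner $xv=v\theta(x)$ furnished by Theorem~\ref{thm.intertwining}, arrange via \cite[Remark 3.8]{Va07} that $\theta(Q)\not\prec N\ovt L(\Stab i)$ for every $i$, apply \cite[Lemma 4.1.1]{IPV10} once more to conclude $\theta(Q)'\cap q(\M_n(\C)\ovt N\ovt M)q\subset \M_n(\C)\ovt N\ovt LG$, deduce $v^*v\in \M_n(\C)\ovt N\ovt LG$, use that $N\ovt LG$ is a II$_1$ factor (here $G$ icc and $N$ a factor are used) to reduce to $n=1$, and only then run the maximality argument to fill up to $p$. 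Your sketch omits this entire passage, and the claim ``$v_1\in L^2(N\ovt LG)$'' is simply not what the bimodule decomposition gives.
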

\begin{proof}
As above, we put $\cM = N \ovt M$ and $\cMtil = N \ovt \Mtil$.
We first prove the existence of a nonzero partial isometry $v \in \cM$ with the properties that $vv^* \in Q' \cap p \cM p$ and that $v^* Q v \subset N \ovt L G$. We reason exactly as in the proofs of \cite[Theorem 4.1]{Po03}, \cite[Theorem 2.1]{Io10} and \cite[Theorem 4.2]{IPV10}. For completeness, we nevertheless provide some details.

By the uniform convergence of $\id \ot \al_t$ on $\cG$, we find a $t > 0$ and a nonzero partial isometry $w_0 \in p \cMtil (\id \ot \al_t)(p)$ such that $x w_0 = w_0 (\id \ot \al_t)(x)$ for all $x \in Q$. We may assume that $t$ is of the form $t = 2^{-n}$. Since for all $i \in I$, we have that $Q \not\prec N \ovt (A_0^I \rtimes \Stab i)$, it follows from \cite[Lemma 4.1.1]{IPV10} that $w_0w_0^* \in \cM$ and $w_0^* w_0 \in (\id \ot \al_t)(\cM)$. Define the period two automorphism $\beta \in \Aut(\Mtil)$ given by $\beta(x) = x$ for all $x \in M$ and $\beta(\pi_i(u_1)) = u_1^*$ for all $i \in I$. By construction, $\beta \circ \al_t = \al_{-t} \circ \beta$.

We can now define
$$w_1 := (\id \ot \al_t)((\id \ot \beta)(v^*) v)$$
and check that $w_1$ is a nonzero partial isometry in $p \cMtil (\id \ot \al_{2t})(p)$ satisfying $x w_1 = w_1 (\id \ot \al_{2t})(x)$ for all $x \in Q$. Continuing inductively, we find a nonzero partial isometry $w \in p \cMtil (\id \ot \al_{1})(p)$ satisfying $x w = w (\id \ot \al_1)(x)$ for all $x \in Q$.
Literally repeating a part of the proof of \cite[Theorem 4.2]{IPV10}, we find a finite, possibly empty, subset $\cF \subset I$ such that $Q \prec N \ovt (A_0^\cF \rtimes \Stab \cF)$. Our assumption that $Q \not\prec N \ovt (A_0^I \rtimes \Stab i)$ for all $i \in I$, ensures that $\cF = \emptyset$. So, $Q \prec N \ovt L G$.

Take $n \in \N$, a nonzero partial isometry $v \in \M_{1,n}(\C) \ovt p(N \ovt M)$, a projection $q$ in \linebreak $\M_n(\C) \ovt N \ovt L G$ and a $*$-homomorphism $\theta : Q \recht q(\M_n(\C) \ovt N \ovt L G)q$ such that $x v = v \theta(x)$ for all $x \in Q$. Since $Q \not\prec N \ovt (A_0^I \rtimes \Stab i)$ for all $i \in I$, by \cite[Remark 3.8]{Va07}, we may assume that for all $i \in I$, we have $\theta(Q) \not\prec N \ovt L(\Stab i)$. By \cite[Lemma 4.1.1]{IPV10}, we then get that
$$\theta(Q)' \cap q(\M_n(\C) \ovt N \ovt M)q \subset \M_n(\C) \ovt N \ovt L G \; .$$
In particular, $v^* v$ is a projection in $\M_n(\C) \ovt N \ovt L G$ of trace at most $1$. Since $N \ovt L G$ is a II$_1$ factor, we may then assume that $n = 1$. So, we have found a nonzero partial isometry $v \in \cM$ with the properties that $vv^* \in Q' \cap p \cM p$ and that $v^* Q v \subset N \ovt L G$.

Let $v_n$ be a maximal sequence of nonzero partial isometries $v_n \in \cM$ with the property that the $v_n v_n^*$ are orthogonal projections in $Q' \cap p \cM p$ such that $v_n^* Q v_n \subset N \ovt L G$. Put $p_0 := p - \sum_n v_n v_n^*$. Since we can apply the previous paragraph to $Q p_0 \subset p_0 \cM p_0$, the maximality of the sequence $(v_n)$ ensures us that $p_0 = 0$.

Since $N \ovt L G$ is a II$_1$ factor and since the $v_n^* v_n$ form a sequence of projections in $N \ovt L G$ with $\sum_n v_n v_n^* = p$, we can take partial isometries $w_n \in N \ovt L G$ such that $w_n w_n^* = v_n^* v_n$ for all $n$ and such that the projections $w_n^* w_n$ are orthogonal. Then $v:=\sum_n v_n w_n$ is a partial isometry in $\cM$ with $vv^* = p$ and $v^* Q v \subset N \ovt L G$.
\end{proof}

\section{Properties of amplified comultiplications}\label{sec.comult}

Throughout this section, assume that $M_0$ is a II$_1$ factor and $r > 0$ such that $M_0^r = L\Lambda$ for some countable group $\Lambda$. We denote by $(v_s)_{s \in \Lambda}$ the canonical generating unitaries of $L \Lambda$ and define the \emph{comultiplication} $\Delta : L \Lambda \recht L \Lambda \ovt L \Lambda$ given by $\Delta(v_s) = v_s \ot v_s$ for all $s \in \Lambda$. Up to unitary conjugacy, we have a uniquely defined \emph{amplified comultiplication} $\Delta : M_0 \recht (M_0 \ovt M_0)^r$ that we continue to denote by $\Delta$.

At a certain point, we will need the explicit relation between the original comultiplication on $L \Lambda$ and the amplified comultiplication on $M_0$. This is spelt out in Remark \ref{rem.amplify-explicit}.

Whenever $(M,\tau)$ is a tracial von Neumann algebra and $M_0 \subset M$, we define as follows the inclusion $M_0^r \subset M^r$. Choose a projection $p \in \M_n(\C) \ot M_0$ with $(\Tr \ot \tau)(p) = r$ and define\linebreak $M_0^r := p (\M_n(\C) \ot M_0)p$ and $M^r := p (\M_n(\C) \ot M) p$. As such, the inclusion $M_0^r \subset M^r$ is defined up to conjugacy by a partial isometry in $\M_n(\C) \ot M_0$.

Apart from statement \ref{comult-three}, the following result is essentially contained in \cite[Proposition 7.2]{IPV10}. For completeness, we nevertheless give a full proof. At a first reading of Proposition \ref{prop.comult}, one may very well assume that $M_0 = M$, which is sufficient to prove Theorem \ref{thm.main-intro}.1. The most general setup is only needed to prove Theorem \ref{thm.main-intro}.2.

\begin{proposition}\label{prop.comult}
Let $M_0$ be a II$_1$ factor and $r > 0$ such that $M_0^r = L\Lambda$ for some countable group $\Lambda$. As above, denote by $\Delta : M_0 \recht (M_0 \ovt M_0)^r$ the amplified comultiplication. Assume that $M$ and $\Mtil$ are tracial von Neumann algebras such that $M_0 \subset M$ and $M_0 \subset \Mtil$.
\begin{enumerate}[label=(\alph*)]
\item\label{comult-two} If $P \subset M$ is a von Neumann subalgebra and $M_0 \not\prec_M P$, then $\Delta(M_0) \not\prec_{M \ovt M} M \ovt P$.
\item\label{comult-three} If $P \subset \Mtil$ is a von Neumann subalgebra and $\Delta(M_0)$ is amenable relative to $M^r \ovt P$ inside $M^r \ovt \Mtil$, then $M_0$ is amenable relative to $P$ inside $\Mtil$.
\item\label{comult-four} If $P \subset M_0$ is a von Neumann subalgebra that has no amenable direct summand, then for every nonzero projection $q \in \Delta(P)' \cap (M \ovt M)^r$, we have that $\Delta(P) q$ is nonamenable relative to $M^r \ot 1$.
\end{enumerate}
\end{proposition}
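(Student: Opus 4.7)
My plan follows closely the strategy of \cite[Proposition 7.2]{IPV10} for parts \ref{comult-two} and \ref{comult-four}; the main new content is part \ref{comult-three}, which I expect to be the hardest step because of the amplification bookkeeping.

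For \ref{comult-two}: the hypothesis $M_0 \not\prec_M P$ implies $L\Lambda = M_0^r \not\prec_M P$ (by restricting any intertwining of $L\Lambda$ to $M_0$), yielding a sequence of canonical unitaries $v_{s_n} \in L\Lambda$ with $\|E_P(a^* v_{s_n} b)\|_2 \recht 0$ for all $a,b \in M$. Using $\Delta(v_s) = v_s \ot v_s$, the elementary-tensor bound
\begin{equation*}
\|E_{M \ovt P}\bigl((x_1 \ot x_2)^*(v_{s_n} \ot v_{s_n})(y_1 \ot y_2)\bigr)\|_2 \leq \|x_1\| \|y_1\| \, \|E_P(x_2^* v_{s_n} y_2)\|_2 \recht 0
\end{equation*}
extends by the Kaplansky density theorem to arbitrary $X, Y \in M \ovt M$, giving $\Delta(L\Lambda) \not\prec M \ovt P$ and hence $\Delta(M_0) \not\prec M \ovt P$. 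For \ref{comult-four}, I would argue by contradiction: assuming $\Delta(P)q$ is amenable relative to $M^r \ot 1$ for some nonzero $q \in \Delta(P)' \cap (M \ovt M)^r$, the identifications $(M \ovt M)^r \cong M^r \ovt M$ and $\langle M^r \ovt M, e_{M^r \ot 1}\rangle \cong M^r \ovt B(L^2(M))$ together with a slice-map argument in the first tensor factor, using the identity $\Delta(v_s)(1 \ot T)\Delta(v_s)^* = 1 \ot v_s T v_s^*$ (immediate from $\Delta(v_s) = v_s \ot v_s$), would transfer the $\Delta(P)q$-central state to a nonzero $P$-central positive functional on $B(L^2(M))$ with normal $M$-restriction. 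Lemma \ref{lem.left-amen-part} then produces a nonzero projection $p_0 \in P' \cap M$ with $Pp_0$ amenable, and passing to the central support of $p_0$ in $Z(P)$ yields an amenable direct summand of $P$, contradicting the hypothesis.

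For \ref{comult-three}, take a $\Delta(M_0)$-central positive functional $\Omega$ on the basic construction for $M^r \ovt P \subset M^r \ovt \Mtil$, which canonically sits as the corner of $\M_n(\C) \ot M \ovt \langle \Mtil, e_P \rangle$ cut down by $p' \ot 1$, where $p' \in \M_n(\C) \ot M_0$ is the projection defining the $r$-amplification. Define $\omega \colon \langle \Mtil, e_P \rangle \recht \C$ by $\omega(y) := \Omega(p' \ot y)$; a direct computation with the normalized traces gives $\omega|_\Mtil = \tau$. The key step is centrality: using the fundamental identity $\mathbf{1} \ot v_s = \Delta(v_s) \cdot (v_s^* \ot \mathbf{1})$ (which in $L\Lambda \ovt L\Lambda$ reads $1 \ot v_s = (v_s \ot v_s)(v_s^* \ot 1)$ and which lifts to the amplified setting via the explicit data of Remark \ref{rem.amplify-explicit}), combined with $\Delta(M_0)$-centrality of $\Omega$, the conjugation by $\mathbf{1} \ot v_s$ converts into a conjugation by $v_s \ot \mathbf{1}$ that commutes with $\mathbf{1} \ot y$ and cancels. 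This yields $L\Lambda$-centrality of $\omega$ (interpreted on the appropriate amplification), whence $M_0$-centrality on $\langle \Mtil, e_P \rangle$, as required. The main obstacle throughout is the careful reconciliation of the unamplified comultiplication $L\Lambda \recht L\Lambda \ovt L\Lambda$ with its amplified counterpart $M_0 \recht (M_0 \ovt M_0)^r$, for which Remark \ref{rem.amplify-explicit} is indispensable.
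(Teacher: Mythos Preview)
Your arguments for \ref{comult-two} and \ref{comult-three} are correct and essentially coincide with the paper's: both rely on the identity $\Delta(v_s)(1\ot T)\Delta(v_s)^* = 1\ot v_s T v_s^*$ together with Cauchy--Schwarz to pass from $(v_s)$-centrality to $L\Lambda$-centrality, and both handle the amplification via the data of Remark~\ref{rem.amplify-explicit}.

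Your argument for \ref{comult-four}, however, has a genuine gap. The identity $\Delta(v_s)(1\ot T)\Delta(v_s)^* = 1\ot v_s T v_s^*$ is only useful if the functional $\Omega$ is central with respect to \emph{all} $\Delta(v_s)$, i.e.\ $\Delta(L\Lambda)$-central. But in \ref{comult-four} the state $\Omega$ is only $\Delta(P)q$-central, and $P\subset M_0$ is an \emph{arbitrary} von~Neumann subalgebra that need not contain, or be generated by, any group unitary $v_s$. For a general unitary $a\in P$ one has $\Delta(a)=\sum_s \tau(v_s^*a)\,v_s\ot v_s$, and $\Delta(a)(1\ot T)\Delta(a)^*$ does \emph{not} reduce to $1\ot aTa^*$; so your slice $\omega(T)=\Omega(q(1\ot T)q)$ has no reason to be $P$-central. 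This is exactly the asymmetry between \ref{comult-three} (where the hypothesis gives full $\Delta(M_0)$-centrality) and \ref{comult-four} (where it does not).

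The paper circumvents this by a bimodule argument rather than a slice map: one checks that the normal $*$-homomorphism
\[
\Psi : L\Lambda \ovt (L\Lambda)\op \recht B(\cK^{\ot 3}),\qquad \Psi(v_s\ot v_t\op)=\lambda(v_s)\rho(v_t\op)\ot\lambda(v_s)\ot\rho(v_t\op)
\]
exists for any $L\Lambda$-$L\Lambda$-bimodule $\cK$, so that the $\Delta(M_0)$-$\Delta(M_0)$-bimodule $L^2(M^r\ovt M)\ot_{M^r\ot 1}L^2(M^r\ovt M)$ is contained in a multiple of the coarse bimodule. Relative amenability of $\Delta(P)q$ to $M^r\ot 1$ then forces the trivial $\Delta(P)q$-bimodule to be weakly contained in the coarse one, so $\Delta(P)q$ itself has an amenable direct summand; since $\Delta$ is a $*$-isomorphism onto its image, $P$ does too. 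The point is that one never needs to extract a $P$-central functional on $B(L^2(M))$: one deduces amenability of a corner of $\Delta(P)$ directly and only afterwards pulls back through $\Delta$.
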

\begin{proof}
Throughout the proof, we fix a projection $p \in \M_n(\C) \ot M_0$ with $(\Tr \ot \tau)(p) = r$. We identify $p(\M_n(\C) \ot M_0)p = L \Lambda$.

\ref{comult-two}\ Let
$\Delta : L \Lambda \recht L \Lambda \ovt L \Lambda : \Delta(v_s) = v_s \ot v_s$
be the original comultiplication. Since $M_0 \not\prec_M P$, also $M_0^r \not\prec_M P$. By Theorem \ref{thm.intertwining}, we can take a sequence $s_n \in \Lambda$ such that
$$\|E_P(x^* v_{s_n} y)\|_2 \recht 0 \quad\text{for all}\;\; x,y \in p(\C^n \ot M) \; .$$
We claim that
\begin{equation}\label{eq.claim-zero}
\|E_{M \ovt P}(x^* \Delta(v_{s_n}) y)\|_2 \recht 0 \quad\text{for all}\;\; x,y \in p(\C^n \ot M) \ovt p(\C^n \ot M) \; .
\end{equation}
Indeed, \eqref{eq.claim-zero} is obvious when $x = x_1 \ot x_2$ and $y = y_1 \ot y_2$ are elementary tensors. Then \eqref{eq.claim-zero} follows easily for general $x,y$ as well. By \eqref{eq.claim-zero} and Theorem \ref{thm.intertwining}, we have $\Delta(L \Lambda) \not\prec_{M \ovt M} M \ovt P$. Then also the conclusion $\Delta(M_0) \not\prec_{M \ovt M} M \ovt P$ follows.

\ref{comult-three}\ We first state two preliminary observations.

($\ast$) Assume that $Q$ and $S$ are tracial von Neumann algebras and that $\bim{M^r}{\cH}{Q}$ and $\bim{{\Mtil}^r}{\cK}{S}$ are bimodules. If the $(M^r \ovt {\Mtil}^r)$-$(Q \ovt S)$-bimodule $\cH \ot \cK$ is left $\Delta(L \Lambda)$-amenable, then $\bim{{\Mtil}^r}{\cK}{S}$ is left $L \Lambda$-amenable.

To prove ($\ast$), assume that $\Om$ is a $\Delta(L \Lambda)$-central state on $B(\cH \ot \cK) \cap (Q\op \ovt S\op)'$ whose restriction to $M^r \ovt {\Mtil}^r$ equals the trace. Then the formula $\Om_0(T) := \Om(1 \ot T)$ defines a state on $B(\cK) \cap (S\op)'$ that is $(v_s)_{s \in \Lambda}$-central and whose restriction to ${\Mtil}^r$ equals the trace. In combination with the Cauchy-Schwarz  inequality, it follows that $\Om_0$ is actually $L \Lambda$-central. This concludes the proof of ($\ast$).

($\ast\ast$) Assume that $S$ is a tracial von Neumann algebra and that $\bim{\Mtil}{\cK}{S}$ is a bimodule. We leave it to the reader to check that $\bim{\Mtil}{\cK}{S}$ is left $M_0$-amenable if and only if the bimodule $\bim{{\Mtil}^r}{(p(\C^n \ot \cK))}{S}$ is left $M_0^r$-amenable.

We are now ready to prove \ref{comult-three}. By our assumptions, the bimodule $\bim{M^r \ovt \Mtil}{L^2(M^r \ovt \Mtil)}{M^r \ovt P}$ is left $\Delta(M_0)$-amenable. From ($\ast\ast$), we get that
$$\bim{M^r \ovt {\Mtil}^r}{L^2(M^r \ovt p(\C^n \ot \Mtil))}{M^r \ovt P}$$
is left $\Delta(M_0^r)$-amenable. It then follows from ($\ast$) that $\bim{{\Mtil}^r}{(p(\C^n \ot L^2(\Mtil)))}{P}$ is left $M_0^r$-amenable. Again using ($\ast\ast$), we get that $\bim{\Mtil}{\rL^2(\Mtil)}{P}$ is left $M_0$-amenable, i.e.\ that $M_0$ is amenable relative to $P$ inside $\Mtil$.

\ref{comult-four}\ Assume that $\bim{L \Lambda}{\cK}{L \Lambda}$ is an arbitrary bimodule. Denote by $\lambda : L (\Lambda) \recht B(\cK)$ and $\rho : (L \Lambda)\op \recht B(\cK)$ the normal $*$-homomorphisms given by the left, resp.\ right bimodule action. It is easy to check that there is a unique normal $*$-homomorphism
$$\Psi : L \Lambda \ovt (L \Lambda)\op \recht B(\cK \ot \cK \ot \cK) : \Psi(v_s \ot v_t\op) = \lambda(v_s)\rho(v_t\op) \ot \lambda(v_s) \ot \rho(v_t\op) \quad\text{for all}\;\; s,t \in \Lambda \; .$$
It follows in particular that the $L \Lambda$-$L \Lambda$-bimodule $\cK \ot \cK \ot \cK$ given by
$$v_s \cdot (\xi_1 \ot \xi_2 \ot \xi_3) \cdot v_t = (v_s \xi_1 v_t) \ot (v_s \xi_2) \ot (\xi_3 v_t)$$
is contained in a multiple of the coarse $L\Lambda$-$L\Lambda$-bimodule. Applying this statement to the bimodule $\bim{L \Lambda}{L^2(M^r)}{L \Lambda}$, it follows that the $\Delta(L \Lambda)$-$\Delta(L\Lambda)$-bimodule
$$\bim{\Delta(L\Lambda)}{\bigl(L^2(M^r \ovt M^r) \ot_{M^r \ot 1} L^2(M^r \ot M^r)\bigr)}{\Delta(L\Lambda)}$$
is contained in a multiple of the coarse $\Delta(L\Lambda)$-$\Delta(L\Lambda)$-bimodule. Then also
\begin{equation}\label{eq.this-is-coarse}
\bim{\Delta(M_0)}{\bigl(L^2(M^r \ovt M) \ot_{M^r \ot 1} L^2(M^r \ot M)\bigr)}{\Delta(M_0)}
\end{equation}
is contained in a multiple of the coarse $\Delta(M_0)$-$\Delta(M_0)$-bimodule.

Assume now that $q \in \Delta(P)' \cap (M \ovt M)^r$ is a nonzero projection such that $\Delta(P) q$ is amenable relative to $M^r \ot 1$. We must prove that $P$ has an amenable direct summand. By our assumption and \cite[Proposition 2.4.3]{PV11}, the bimodule $\bim{M^r \ovt M}{(L^2(M^r \ovt M)q)}{\Delta(P)q}$ is weakly contained in the bimodule
$$\bim{M^r \ovt M}{\bigl(L^2(M^r \ovt M) \ot_{M^r \ot 1} L^2(M^r \ovt M)q\bigr)}{\Delta(P)q} \; .$$
Viewing $L^2(\Delta(P)q)$ as a subspace of $L^2(M^r \ovt M)q$, it follows that $\bim{\Delta(P)q}{L^2(\Delta(P)q)}{\Delta(P)q}$ is weakly contained in the bimodule
$$\bim{\Delta(P)q}{\bigl(q L^2(M^r \ovt M) \ot_{M^r \ot 1} L^2(M^r \ot M)q\bigr)}{\Delta(P)q} \; .$$
Since the bimodule in \eqref{eq.this-is-coarse} is contained in a multiple of the coarse $\Delta(M_0)$-$\Delta(M_0)$-bimodule, we conclude that the trivial $\Delta(P)q$-$\Delta(P)q$-bimodule is weakly contained in the coarse\linebreak $\Delta(P)q$-$\Delta(P)q$-bimodule. Hence $\Delta(P)q$ has an amenable direct summand. Then also $P$ has an amenable direct summand.
\end{proof}

\begin{remark}\label{rem.amplify-explicit}
Assume that $M_0$ is a II$_1$ factor and $r > 0$ such that $M_0^r = L \Lambda$ for some countable group $\Lambda$. Consider the comultiplication
$$\Delta : L \Lambda \recht L \Lambda \ovt L \Lambda : \Delta(v_s) = v_s \ot v_s \quad\text{for all}\;\; s \in \Lambda \; .$$

Take a projection $p \in \M_n(\C) \ot M_0$ with $(\Tr \ot \tau)(p) = r$ and realize $M_0^r = p(\M_n(\C) \ot M_0)p$. Realize $(M_0 \ovt M_0)^r$ as $M_0^r \ovt M_0$. The relation between $\Delta$ and the amplified comultiplication $\Delta_0 : M_0 \recht M_0^r \ovt M_0$ can be concretized in the following slightly painful way.

Denote by $\zeta : \M_n(\C) \ot M_0 \recht M_0 \ot \M_n(\C)$ the flip isomorphism. Put
$$\Delta_1 := (\id \ot \id \ot \zeta^{-1}) \circ (\Delta_0 \ot \id) \circ \zeta \; ,$$
which is a unital $*$-homomorphism from $\M_n(\C) \ot M_0$ to $M_0^r \ovt \M_n(\C) \ovt M_0$. We then find an element $Z \in M_0^r \ovt \M_n(\C) \ovt M_0$ such that $Z^* Z = \Delta_1(p)$, $Z Z^* = p \ot p$ and $\Delta(x) = Z \Delta_1(x) Z^*$ for all $x \in M_0^r$.

\end{remark}

\section{Normalizers of relatively amenable subalgebras}\label{sec.norm-amen}

Throughout this section, we work in the following setup and under the following assumptions. We refer to Sections \ref{sec.class-S-wa} and \ref{sec.inner-amen} for the definitions of weak amenability, class $\cS$ and property Gamma.

{\bf Setup.} We are given a II$_1$ factor $M_0$, a countable group $\Lambda$ and a number $r > 0$ such that $M_0^r = L \Lambda$. We assume that $M_0 \subset M$, where $M$ is of the form $M = B \rtimes \Gamma$ for a given trace preserving action $\Gamma \actson (B,\tau)$ of a countable group $\Gamma$. We denote by $\Delta : M_0 \recht M_0^r \ovt M_0$ the amplified comultiplication, as in Section \ref{sec.comult}.

{\bf Assumptions.}
\begin{enumerate}
\item\label{assum.group} The group $\Gamma$ satisfies one of the following conditions.
\begin{enumerate}[label=(\alph*)]
\item\label{assum.group-one} $\Gamma$ is nonamenable, weakly amenable and in class $\cS$.
\item\label{assum.group-two} $\Gamma = \Gamma_1 * \Gamma_2$ with $|\Gamma_1| \geq 2$ and $|\Gamma_2| \geq 3$ and $M_0' \cap M^\omega = \C 1$.
\end{enumerate}

\item\label{assum.rel-com} We have $\Delta(M_0)' \cap M^r \ovt M = \C 1$.

\item\label{assum.embed} If $\Gamma_0 < \Gamma$ is a subgroup of infinite index, we have that $M_0 \not\prec_M B \rtimes \Gamma_0$.

\item\label{assum.rel-amen} We have that $M_0$ is nonamenable relative to $B$ inside $M$.
\end{enumerate}

At a first reading, one may very well assume that $M_0 = M$. In that case, assumption \ref{assum.rel-com} follows because $\Lambda$ is an icc group, while assumptions \ref{assum.embed} and \ref{assum.rel-amen} are trivially satisfied. This will be enough to prove Theorem \ref{thm.main-intro}.1. The general situation is only needed to prove Theorem \ref{thm.main-intro}.2.

The following theorem is a direct consequence of the main results in \cite{PV12} and \cite{Io12b}.

\begin{theorem}\label{thm.norm-amen}
Assume that we are in the setup and under the assumptions described above. If $P \subset M^r \ovt M$ is a von Neumann subalgebra such that $\Delta(M_0) \subset \cN_{M^r \ovt M}(P)\dpr$ and such that $P$ is amenable relative to $M^r \ovt B$, then $P \prec^f M^r \ovt B$.
\end{theorem}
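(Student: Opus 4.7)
The plan is to apply the dichotomy theorems for normalizers of relatively amenable subalgebras, namely \cite{PV12} in case \ref{assum.group-one} and \cite{Io12b} in case \ref{assum.group-two}, to $P$ viewed as a subalgebra of the crossed product
\[ M^r \ovt M = (M^r \ovt B) \rtimes \Gamma, \]
where $\Gamma$ acts on $M^r \ovt B$ through the second tensor factor. In this picture, $P$ is amenable relative to the ``core'' $M^r \ovt B$ and its normalizer contains $\Delta(M_0)$.

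In case \ref{assum.group-one}, the main theorem of \cite{PV12} yields the following dichotomy: any $P \subset M^r \ovt M$ amenable relative to $M^r \ovt B$ satisfies either $P \prec_{M^r \ovt M} M^r \ovt B$, or else $\cN_{M^r \ovt M}(P)\dpr$ is itself amenable relative to $M^r \ovt B$. In case \ref{assum.group-two}, the results of \cite{Io12b} provide an analogous dichotomy, the hypothesis $M_0' \cap M^\om = \C 1$ being used to exclude the intermediate Gamma/McDuff-type alternatives that typically appear in the free product setting.

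In either case, I rule out the second alternative. Since $\Delta(M_0) \subset \cN_{M^r \ovt M}(P)\dpr$, this alternative would imply that $\Delta(M_0)$ is amenable relative to $M^r \ovt B$ inside $M^r \ovt M$. Proposition \ref{prop.comult}\ref{comult-three}, applied with $\Mtil = M$ and taking $B$ as the subalgebra of $\Mtil$ in the statement, would then force $M_0$ to be amenable relative to $B$ inside $M$, contradicting assumption \ref{assum.rel-amen}. Hence only the first alternative survives, so $P \prec_{M^r \ovt M} M^r \ovt B$.

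Finally, to upgrade $\prec$ to $\prec^f$, I invoke Lemma \ref{lem.full-embed}\ref{full-one}, which produces a maximal projection $p_1 \in P' \cap M^r \ovt M$ with $P p_1 \prec^f M^r \ovt B$ and which moreover lies in the center of $\cN_{M^r \ovt M}(P)\dpr$. In particular $p_1$ commutes with $\Delta(M_0)$, so assumption \ref{assum.rel-com} forces $p_1 \in \{0,1\}$. Since $P \prec M^r \ovt B$, the maximality clause of Lemma \ref{lem.full-embed}\ref{full-one} excludes $p_1 = 0$, so $p_1 = 1$ and $P \prec^f M^r \ovt B$. The main obstacle in this plan is the correct invocation of the free product dichotomy of \cite{Io12b} in case \ref{assum.group-two}, where the output is more delicate than in the PV12 case and the intermediate alternatives must be eliminated by means of the hypothesis $M_0' \cap M^\om = \C 1$.
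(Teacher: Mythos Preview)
Your treatment of case \ref{assum.group-one} and the upgrade from $\prec$ to $\prec^f$ are correct and match the paper exactly.

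The gap you yourself flag in case \ref{assum.group-two} is real, and the actual argument is more involved than the phrase ``analogous dichotomy from \cite{Io12b}'' suggests. There is no single black-box dichotomy in \cite{Io12b} that takes the hypotheses of Theorem~\ref{thm.norm-amen} as input and outputs $P \prec M^r \ovt B$. What the paper does is the following. One first extends $\Gamma \actson B$ trivially to $\Gamma * \F_2 \actson B$, forms $\Mtil = B \rtimes (\Gamma * \F_2)$, and uses Ioana's free malleable deformation $\theta_t$. Writing $N = B \rtimes \cL$ for the kernel $\cL$ of $\Gamma * \F_2 \twoheadrightarrow \F_2$, one observes that $(\id \ot \theta_t)(P)$ is amenable relative to $M^r \ovt N$, and then applies \cite[Theorem~1.6]{PV11} (not \cite{Io12b}) to the crossed product $(M^r \ovt N) \rtimes \F_2$ to get the dichotomy: either $(\id \ot \theta_t)(P) \prec M^r \ovt N$ for some $t$, or $(\id \ot \theta_t)\Delta(M_0)$ is amenable relative to $M^r \ovt N$ for all $t$.

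Each branch then requires a separate piece of \cite{Io12b}, and assumption~\ref{assum.embed} is essential---your sketch only invokes assumption~\ref{assum.rel-amen}. In the first branch one uses Proposition~\ref{prop.comult}\ref{comult-two} with assumption~\ref{assum.embed} to get $\Delta(M_0) \not\prec M^r \ovt (B \rtimes \Gamma_j)$, and then \cite[Theorem~3.2]{Io12b} (for amalgamated free products) gives $P \prec M^r \ovt B$. In the second branch one pushes the relative amenability back through Proposition~\ref{prop.comult}\ref{comult-three} to get $\theta_t(M_0)$ amenable relative to $N$ for all $t$, and then \cite[Theorem~5.1]{Io12b} together with $M_0' \cap M^\omega = \C 1$ forces either $M_0 \prec B \rtimes \Gamma_j$ (contradicting assumption~\ref{assum.embed}) or $M_0$ amenable relative to $B$ (contradicting assumption~\ref{assum.rel-amen}). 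So both assumptions~\ref{assum.embed} and~\ref{assum.rel-amen} are needed, and the non-Gamma hypothesis enters specifically through \cite[Theorem~5.1]{Io12b} rather than as a generic way to discard ``intermediate alternatives.''
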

\begin{proof}
Throughout the proof, we view $M^r \ovt M$ as the crossed product $(M^r \ovt B) \rtimes \Gamma$. By assumption \ref{assum.rel-com}, we have that $\Delta(M_0)' \cap (M \ovt M)^r = \C 1$. Since $\Delta(M_0) \subset \cN_{M^r \ovt M}(P)\dpr$, by Lemma \ref{lem.full-embed}.\ref{full-one}, it suffices to prove that $P \prec M^r \ovt B$.

First assume that $\Gamma$ satisfies assumption \ref{assum.group}.\ref{assum.group-one}. By \cite[Theorem 1.4]{PV12}, we have that either $\Delta(M_0)$ is amenable relative to $M^r \ovt B$, or that $P \prec M^r \ovt B$. Using Proposition \ref{prop.comult}.\ref{comult-three} and assumption \ref{assum.rel-amen}, we see that the first option is impossible. So we indeed get that $P \prec M^r \ovt B$.

Next assume $\Gamma$ satisfies assumption \ref{assum.group}.\ref{assum.group-two}. We apply the main results of \cite{Io12b} and need to introduce some of the corresponding notations. We extend the action $\Gamma \actson B$ to an action $\Gamma * \F_2 \actson B$ by letting $\F_2$ act trivially. Denote $\Mtil := B \rtimes (\Gamma * \F_2)$. View $\F_2$ as the free product of two copies of $\Z$ that we denote as $(\Z)_j$, $j=1,2$.
Choose self-adjoint elements $h_j$ in $L (\Z)_j$ with spectrum $[-\pi,\pi]$ and with the property that $\exp(i h_j)$ is the canonical unitary that generates $L (\Z)_j$. Denote $u^j_t := \exp(i t h_j)$ and define the one parameter group of automorphisms $\theta_t$ of $\Mtil$ given by $\theta_t(x) = u^j_t x (u^j_t)^*$ for all $x \in B \rtimes (\Gamma_j * (\Z)_j)$. Denote by $\cL$ the kernel of the natural surjective homomorphism of $\Gamma * \F_2$ onto $\F_2$ and put $N := B \rtimes \cL$. Observe that we can view $M^r \ovt \Mtil$ as the crossed product $M^r \ovt \Mtil = (M^r \ovt N) \rtimes \F_2$.

Fix $t \in (0,1)$. Since $P$ is amenable relative to $M^r \ovt B$ inside $M^r \ovt M$, we have that $(\id \ot \theta_t)(P)$ is amenable relative to $M^r \ovt \theta_t(B)$ inside $M^r \ovt \Mtil$. Since $\theta_t(B) = B \subset N$, we get a fortiori that $(\id \ot \theta_t)(P)$ is amenable relative to $M^r \ovt N$. By \cite[Theorem 1.6]{PV11}, we get that either $(\id \ot \theta_t)(P) \prec M^r \ovt N$, or that $(\id \ot \theta_t)\Delta(M_0)$ is amenable relative to $M^r \ovt N$. So we are in one of the following situations.

{\bf Case 1.} There exists a $t \in (0,1)$ such that $(\id \ot \theta_t)(P) \prec M^r \ovt N$. Using Proposition \ref{prop.comult}.\ref{comult-two} and assumption \ref{assum.embed}, we know that for all $j \in \{1,2\}$, we have $\Delta(M_0) \not\prec M^r \ovt (B \rtimes \Gamma_j)$. Since $M^r \ovt M$ is the amalgamated free product of $M^r \ovt (B \rtimes \Gamma_1)$ and $M^r \ovt (B \rtimes \Gamma_2)$, amalgamated over $M^r \ovt B$, it follows from \cite[Theorem 3.2]{Io12b} that $P \prec M^r \ovt B$.

{\bf Case 2.} For all $t \in (0,1)$, we have that $(\id \ot \theta_t)\Delta(M_0)$ is amenable relative to $M^r \ovt N$. Fix $t \in (0,1)$. We get that $\Delta(M_0)$ is amenable relative to $M^r \ovt \theta_{-t}(N)$ inside $M^r \ovt \Mtil$. By Proposition \ref{prop.comult}.\ref{comult-three}, we conclude that $M_0$ is amenable relative to $\theta_{-t}(N)$ inside $\Mtil$. Hence $\theta_t(M_0)$ is amenable relative to $N$ inside $\Mtil$. So $\theta_t(M_0)$ is amenable relative to $N$ inside $\Mtil$ for all $t \in (0,1)$. Also it is part of assumption \ref{assum.group}.\ref{assum.group-two} that $M_0' \cap M^\omega = \C1$. Viewing $M$ as the amalgamated free product of $B \rtimes \Gamma_1$ and $B \rtimes \Gamma_2$, amalgamated over $B$, it follows from \cite[Theorem 5.1]{Io12b} that either $M_0 \prec B \rtimes \Gamma_j$ for some $j \in \{1,2\}$, or that $M_0$ is amenable relative to $B$ inside $M$. The first option contradicts assumption \ref{assum.embed}, while the second option contradicts assumption \ref{assum.rel-amen}. Hence case 2 is ruled out.
\end{proof}

\section{Left-right wreath products and inner amenability}

We need the following elementary results on left-right wreath products $H^{(\Gamma)} \rtimes (\Gamma \times \Gamma)$, where the direct product group $\Gamma \times \Gamma$ acts on the set $\Gamma$ by left-right multiplication: $(g,h) \cdot k = gkh^{-1}$. We refer to Section \ref{sec.inner-amen} for the definition of inner amenability.

\begin{proposition}\label{prop.left-right-wreath}
Let $H$ and $\Gamma$ be arbitrary countable groups with $H \neq \{e\}$. Write $\cH := H^{(\Gamma)}$ and consider the left-right wreath product $\cG := \cH \rtimes (\Gamma \times \Gamma)$. Denote by $H_1$ the abelianization of $H$ with quotient map $p_1 : H \recht H_1$. Define the homomorphism
$$p : \cH \recht H_1 : p(x) = \sum_{g \in \Gamma} p_1(x_g) \; .$$
Denote by $\cH_0$ the kernel of $p$ and define $\cG_0 := \cH_0 \rtimes (\Gamma \times \Gamma)$.
\begin{enumerate}[label=(\alph*)]
\item\label{left-right-a} If $\Gamma$ is not inner amenable, also $\cG$ is not inner amenable. Even more so, the unitary representation $(\Ad g)_{g \in \Gamma \times \Gamma}$ on $\ell^2(\cG - \{e\})$ does not have almost invariant vectors. So any subgroup of $\cG$ that contains $\Gamma \times \Gamma$ is not inner amenable.
\item\label{left-right-b} If $\Gamma$ is nonamenable and finitely generated and if $\Gamma$ has trivial center, then $\cG$ is not inner amenable.
\item\label{left-right-c} If $\Gamma$ is infinite and has trivial center, then $\cG_0$ and $\cG$ are icc groups and $$(L\cH_0)' \cap L \cG \subset L \cH \quad\text{and}\quad (L\cG_0)' \cap L \cG = \C 1 \; .$$
\end{enumerate}
\end{proposition}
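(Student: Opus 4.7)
The plan is to parametrize $\cG = \cH \rtimes G$ with $G = \Gamma \times \Gamma$, writing $(x,a)$ for elements, and to record that $\Ad(y,b)$ sends $(x,a)$ to $(y\alpha_b(x)\alpha_{bab^{-1}}(y^{-1}), bab^{-1})$, so that the projection $\pi \colon \cG \to G$ is $\Ad \cG$-equivariant. For \textbf{(a)}, I would decompose $\ell^2(\cG \setminus \{e\}) = \cK_1 \oplus \cK_2$ with $\cK_1 = \ell^2(\cH \setminus \{e_\cH\})$ (the fiber over $e_G$) and $\cK_2 = \ell^2(\cH) \otimes \ell^2(G \setminus \{e_G\})$, each $\Ad(\Gamma \times \Gamma)$-invariant. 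A hypothetical sequence of almost invariant unit vectors must yield almost invariant vectors in at least one summand. In $\cK_2$, I take fiberwise $\cH$-norms via $\pi_*$ to obtain almost invariant vectors in $\ell^2((\Gamma \times \Gamma) \setminus \{(e,e)\})$ for the biadjoint $\Gamma \times \Gamma$-action; decomposing as a tensor of two conjugation representations of $\Gamma$ and splitting each as $\C\delta_e \oplus \ell^2(\Gamma \setminus \{e\})$, non-inner-amenability of $\Gamma$ rules this out. In $\cK_1$, restricting the action to $\{e\} \times \Gamma$, which acts on $\cH$ by right-translation, every finitely supported $x \neq e_\cH$ has finite right-stabilizer in $\Gamma$, so this representation is a direct sum of $\ell^2(\Gamma/F_i)$'s with $F_i$ finite, admitting almost invariant vectors only when $\Gamma$ is amenable; nonamenability of $\Gamma$ follows from the hypothesis since non-icc groups and icc-amenable groups are both inner amenable (by Effros).

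For \textbf{(b)}, the same split is available and the $\cK_1$-argument carries over since $\Gamma$ is nonamenable by hypothesis. The main obstacle is ruling out the $\cK_2$-case without being able to invoke non-inner-amenability of $\Gamma$. The idea is to exploit the full $\Ad \cG$-almost-invariance of the candidate sequence (not just $\Ad(\Gamma \times \Gamma)$); the $\Ad \cH$-action fixes the $G$-component and acts on each fiber $\cH \times \{a\}$ through the twisted map $y \mapsto y \alpha_a(y^{-1})$. A combined analysis of the $\Ad \cH$ and $\Ad(\Gamma \times \Gamma)$ orbit structure, using that $\Gamma$ is finitely generated with trivial center to guarantee enough distinct conjugates of a typical element $(y,a)$ with $a \neq e_G$, should force the $\cK_2$-mass of $\xi_n$ to zero. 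Making this step rigorous is the substantive work in (b).

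For \textbf{(c)} I handle icc-ness and the two commutant identities separately. For icc of $\cG$ (and of $\cG_0 \supset G$): given $g = (x,a) \neq e$, if $a \neq e_G$ I conjugate by $y = h \delta_k \in \cH$ for varying $k \in \Gamma$, noting that the twisted map only fails to separate for $k$ in a proper coset (by triviality of the center, which rules out $a$ being central in either factor), so infinitely many distinct $\cH$-components arise; if $a = e_G$ so $g = x \in \cH \setminus \{e_\cH\}$, I conjugate by $\Gamma \times \Gamma$ via $\alpha$, and the $\{e\} \times \Gamma$-stabilizer of $x$ is finite because it preserves the finite support of $x$, so the orbit is infinite. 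For the commutant identity $(L\cH_0)' \cap L\cG \subset L\cH$: this commutant is spanned by $u_g$ with finite $\cH_0$-conjugation orbit, and the same computation (now with $y \in \cH_0$) shows the orbit is infinite whenever $a \neq e_G$, since $\{y\alpha_a(y^{-1}) : y \in \cH_0\}$ is infinite (as $\cH_0$ has finite codimension in $\cH$ and $\ker(1-\alpha_a) \cap \cH$ is proper for $a \neq e_G$), so the support lies in $\cH$. Combining with commutation with $L G$ then gives $(L\cG_0)' \cap L\cG = \C 1$: the coefficients $(c_y)_{y \in \cH}$ must be $\alpha G$-invariant, but every $\alpha G$-orbit of $y \neq e_\cH$ is infinite by the icc argument, leaving only scalars.
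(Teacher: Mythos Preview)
Your argument for \ref{left-right-a} is correct and essentially the same as the paper's, though the paper uses the decomposition $\ell^2(G) \oplus \ell^2(\cG\setminus G)$ rather than your $\cK_1\oplus\cK_2$: the paper observes that the $\Ad(\Gamma\times\{e\})$-action on $\cG\setminus G$ has finite stabilizers (any $(x,a)$ with $x\neq e$ has stabilizer injecting into $\operatorname{Sym}(\operatorname{supp} x)$), so nonamenability of $\Gamma$ forces $\|\xi_n-P_G\xi_n\|\to 0$; then non-inner-amenability of $G$ finishes. Both decompositions work.

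For \ref{left-right-b} there is a genuine gap: your plan for the $\cK_2$-piece is only a hope, and your decomposition is not well suited to exploiting the $\Ad\cH$-invariance. The paper's proof is short and uses a concrete identity you are missing. With the paper's decomposition, one first gets $\|\xi_n-P_G\xi_n\|\to 0$ from nonamenability as above. Fix $s\in H\setminus\{e\}$ and for $k\in\Gamma$ let $s_k\in\cH$ be $s$ placed in position $k$. A direct computation gives
\[
P_G\circ \Ad(s_k)\circ P_G \;=\; P_{\Stab k}\,,
\]
since $\Ad(s_k)(e,a)=(s_k\,s_{a\cdot k}^{-1},a)$ lies in $G$ exactly when $a\cdot k=k$. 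Combining with $\Ad(s_k)$-almost-invariance and $\|\xi_n-P_G\xi_n\|\to 0$ yields $\|\xi_n-P_{\Stab\cF}\xi_n\|\to 0$ for every finite $\cF\subset\Gamma$. Taking $\cF=\{e,k_1,\dots,k_r\}$ with $\{k_i\}$ a finite generating set, one checks $\Stab\cF=\{(g,g):g\in Z(\Gamma)\}=\{e\}$, so $\|\xi_n\|\to 0$. This is where finite generation and trivial center enter, and it replaces the non-inner-amenability used in \ref{left-right-a}.

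Your argument for \ref{left-right-c} is essentially the paper's, but contains a false assertion: $\cH_0$ need not have finite index in $\cH$ (the quotient is $H_1$, which can be infinite), so you cannot argue that way. The fix is exactly what the paper does: take $z=s_e^{-1}s_k\in\cH_0$ and let $k$ run over the infinitely many points with $a\cdot k\neq k$ (your ``proper coset'' observation, which is correct); for $k$ outside $\operatorname{supp}(x)\cup\{e,a\cdot e\}$ these conjugates $z(x,a)z^{-1}$ are visibly distinct. Your deduction of $(L\cG_0)'\cap L\cG=\C 1$ from $(L\cH_0)'\cap L\cG\subset L\cH$ and the infinite $\Ad G$-orbits on $\cH\setminus\{e\}$ is fine.
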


Statement \ref{left-right-b} in the above proposition is not used in the paper. We added it in order to put it in contrast with Remark \ref{rem.McDuff}, where we show that there are nonamenable icc groups $\Gamma$ such that $L\cG$ is a McDuff II$_1$ factor, and in particular such that $\cG$ is not W$^*$-superrigid.

\begin{proof}
Throughout the proof, we write $G := \Gamma \times \Gamma$. We denote by $P_G$ the orthogonal projection of $\ell^2(\cG)$ onto $\ell^2(G)$. The action $(\Ad g)_{g \in \Gamma \times \{e\}}$ on $\cG - G$ has finite stabilizers. Therefore, the restriction of the representation $(\Ad g)_{g \in \Gamma \times \{e\}}$ to the invariant subspace $\ell^2(\cG - G)$ is weakly contained in the regular representation of $\Gamma$.

\ref{left-right-a}\ Assume that $\xi_n \in \ell^2(\cG - \{e\})$ is a sequence of vectors that is almost invariant under $(\Ad g)_{g \in G}$. By the remark in the first paragraph and because $\Gamma$ is nonamenable, it follows that $\|\xi_n - P_G(\xi_n)\|_2 \recht 0$. Note that $(P_G(\xi_n))$ is a sequence of vectors in $\ell^2(G - \{e\})$ that is almost invariant under $(\Ad_g)_{g \in G}$. Since $\Gamma$ is not inner amenable, also $G$ is not inner amenable. Hence $\|P_G(\xi_n)\|_2 \recht 0$. So also $\|\xi_n\|_2 \recht 0$.

\ref{left-right-b}\ Assume that $\xi_n \in \ell^2(\cG - \{e\})$ is a sequence of vectors that is almost invariant under $(\Ad g)_{g \in \cG}$. By the remark in the first paragraph and because $\Gamma$ is nonamenable, it follows that $\|\xi_n - P_G(\xi_n)\|_2 \recht 0$.
Fix an element $s \in H-\{e\}$. For every $k \in \Gamma$, denote by $s_k \in H^{(\Gamma)}$ the element $s$ viewed in position $k$. It is easy to check that $P_G \circ (\Ad s_k) \circ P_G = P_{\Stab k}$. Since $\|\xi_n - P_G(\xi_n)\| \recht 0$ and since the sequence $(\xi_n)$ is almost invariant under $(\Ad g)_{g \in \cG}$, we conclude that $\|\xi_n - P_{\Stab \cF}(\xi_n)\| \recht 0$ for every finite subset $\cF \subset \Gamma$. If $\{k_1,\ldots,k_r\}$ is a finite generating set for $\Gamma$, one checks that $\Stab \{e,k_1,\ldots,k_r\} = \{(g,g) \mid g \in \operatorname{Center} \Gamma \}$. Since $\Gamma$ has trivial center, we get that $\|\xi_n\| \recht 0$.

\ref{left-right-c}\ We start by proving the following claim: for every $g \in G-\{e\}$, there exist infinitely many $k \in \Gamma$ such that $g \cdot k \neq k$. To prove this claim, denote $\delta : \Gamma \recht G : \delta(h) = (h,h)$. If $g \in G$ is not conjugate with an element in $\delta(\Gamma)$, we have $g \cdot k \neq k$ for all $k \in \Gamma$ and the claim is trivial. If $g \in G - \{e\}$ is conjugate with the element $\delta(h) \in \delta(\Gamma)$, we may actually assume that $g = \delta(h)$. Given $k \in \Gamma$, we have $g \cdot k = k$ if and only if $h$ commutes with $k$. So if $g \cdot k = k$ for all but finitely many $k \in \Gamma$, it follows that the centralizer $\Gamma_0 := \operatorname{Centr}_\Gamma(h)$ of $h$ inside $\Gamma$ has a finite complement. Since $\Gamma_0 < \Gamma$ is a subgroup and $\Gamma$ is infinite, this implies that $\Gamma_0 = \Gamma$. So $h$ lies in the center of $\Gamma$. This is impossible, because we assumed that $\Gamma$ has trivial center and that $\delta(h) = g \neq e$.

Having proven the claim above, we show that for every $x \in \cG - \cH$, we have that $\{z x z^{-1} \mid z \in \cH_0\}$ is infinite. We write $x = y g$ with $y \in \cH$ and $g \in G - \{e\}$. Define $$\cF_0 := \{e\} \cup \{g \cdot e\} \cup \{k \in \Gamma \mid y_k \neq e\} \; .$$
By the claim in the previous paragraph, we can inductively choose elements $k_n \in \Gamma$ such that $g \cdot k_n \neq k_n$ for all $n$ and such that the sets $\cF_0$, $\{k_1,g \cdot k_1\}$, $\{k_2,g \cdot k_2\}$, ... are all disjoint. Fix an element $s \in H - \{e\}$. For every $k \in \Gamma$, denote by $s_k \in H^{(\Gamma)}$ the element $s$ viewed in position $k$. Define the sequence of elements $z_n \in \cH_0$ given by $z_n := s_e^{-1} s_{k_n}$. Since
$$z_n x z_n^{-1}  = s_e^{-1} \, s_{k_n} \, y \, s_{g \cdot k_n}^{-1} \, s_{g \cdot e} \; g \; ,$$
we get that all elements $z_n x z_n^{-1}$ are distinct. So the set $\{z x z^{-1} \mid z \in \cH_0\}$ is infinite for every $x \in \cG - \cH$. This means that $(L\cH_0)' \cap L \cG \subset L \cH$.

It remains to prove that $(L \cG_0)' \cap L \cG = \C 1$. Because of the previous paragraph, it suffices to observe that elements in $\cH - \{e\}$ have an infinite conjugacy class under $(\Ad g)_{g \in \Gamma \times \{e\}}$.
\end{proof}

\begin{remark} \label{rem.McDuff}
There are nonamenable icc groups $\Gamma$ such that $\cG := H^{(\Gamma)} \rtimes (\Gamma \times \Gamma)$ is inner amenable, and even such that $L \cG$ is a McDuff II$_1$ factor (see Section \ref{sec.inner-amen} for terminology). Indeed, it suffices that $\Gamma$ admits two sequences of elements $(g_n),(h_n)$ with the property that $g_n$ and $h_n$ do not commute, but eventually commute with any fixed element of $\Gamma$. In that case, $u_{(g_n,g_n)}$ and $u_{(h_n,h_n)}$ form two noncommuting central sequences in $L \cG$, forcing $L \cG$ to be McDuff. Such sequences can be easily found in the icc group $S_\infty$ of finite permutations of $\N$, and hence also in the nonamenable icc group $\F_2 \times S_\infty$.

Because of the previous paragraph, \emph{not all nonamenable left-right wreath product groups are W$^*$-superrigid.}
\end{remark}

\section{Comultiplications and relative commutants}\label{sec.comult-rel-commutant}

\begin{lemma}\label{lem.rel-icc-intertwine}
Let $G$ and $\cG$ be countable groups and $\gamma_i : G \recht \cG$ group homomorphisms, with $i=1,2$. Assume that for every $h \in \cG - \{e\}$, the set $\{\gamma_1(g) h \gamma_1(g)^{-1} \mid g \in G \}$ is infinite. Then the following statements are equivalent.
\begin{enumerate}[label=(\alph*)]
\item\label{comult.aaa} There exists an $h \in \cG$ such that $\gamma_1(g) = h \gamma_2(g) h^{-1}$ for all $g \in G$.
\item\label{comult.bbb} There exists a finite subset $\cF \subset \cG$ such that $\cF \cap \gamma_1(g) \cF \gamma_2(g)^{-1} \neq \emptyset$ for all $g \in G$.
\item\label{comult.ccc} The unitary representation
$$\pi : G \recht \cU(\ell^2 \cG) : \pi(g)\xi = u_{\gamma_1(g)} \xi u_{\gamma_2(g)}^*$$
is not weakly mixing.
\end{enumerate}
\end{lemma}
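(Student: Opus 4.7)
My plan is to first identify $\pi$ as a genuine permutation representation. Using the identification $\delta_k \leftrightarrow u_k$ for $L^2(L\cG) \cong \ell^2 \cG$, one computes $\pi(g)\delta_k = u_{\gamma_1(g)} u_k u_{\gamma_2(g)}^* = \delta_{\gamma_1(g) k \gamma_2(g)^{-1}}$, so $\pi$ is the permutation representation associated with the $G$-action on the countable set $\cG$ given by $g \cdot k := \gamma_1(g) k \gamma_2(g)^{-1}$. This places everything in the setting of Lemma \ref{lem.prel-weak-mixing}.

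The implication \ref{comult.aaa} $\Rightarrow$ \ref{comult.bbb} is immediate with $\cF = \{h\}$: the identity $\gamma_1(g) = h \gamma_2(g) h^{-1}$ rearranges to $\gamma_1(g) h \gamma_2(g)^{-1} = h$, so $h \in \cF \cap \gamma_1(g)\cF\gamma_2(g)^{-1}$ for every $g$. The equivalence \ref{comult.bbb} $\Leftrightarrow$ \ref{comult.ccc} is essentially a restatement of Lemma \ref{lem.prel-weak-mixing}: condition \ref{comult.bbb} is the verbatim negation of ``for every finite $\cF \subset \cG$ there exists $g \in G$ with $g \cdot \cF \cap \cF = \emptyset$'', which Lemma \ref{lem.prel-weak-mixing} identifies with weak mixing of $\pi$.

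The main work lies in \ref{comult.ccc} $\Rightarrow$ \ref{comult.aaa}, and this is where the icc-type hypothesis enters. From Lemma \ref{lem.prel-weak-mixing} I will extract an $h \in \cG$ whose orbit $O := \{\gamma_1(g) h \gamma_2(g)^{-1} \mid g \in G\}$ is finite, and then introduce
$$\phi : G \recht \cG, \qquad \phi(g) := \gamma_1(g) h \gamma_2(g)^{-1} h^{-1},$$
which takes values in the finite set $Oh^{-1}$. A short direct calculation yields the cocycle identity
$$\phi(g g') = \bigl(\gamma_1(g) \phi(g') \gamma_1(g)^{-1}\bigr)\, \phi(g) \quad\text{for all}\;\; g, g' \in G,$$
so that $\gamma_1(g) \phi(g') \gamma_1(g)^{-1} = \phi(gg') \phi(g)^{-1} \in \phi(G) \phi(G)^{-1}$, a finite subset of $\cG$. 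Thus every $c \in \phi(G)$ has a finite $\gamma_1$-conjugation orbit, and the hypothesis forces $c = e$, i.e.\ $\phi \equiv e$, which is exactly \ref{comult.aaa}. The only conceptually nontrivial point, and what I identify as the main obstacle, is recognizing that the finite-orbit condition for the twisted action $g \cdot k = \gamma_1(g) k \gamma_2(g)^{-1}$ can be repackaged via the 1-cocycle $\phi$ as a pure $\gamma_1$-conjugation orbit condition, to which the hypothesis applies.
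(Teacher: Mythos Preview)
Your proof is correct and follows essentially the same route as the paper's: both extract an $h \in \cG$ with finite orbit $O = \cF_1$, and both then observe that the finite set $\phi(G)\phi(G)^{-1} = \cF_1 \cF_1^{-1}$ is stable under $\gamma_1$-conjugation, forcing it to reduce to $\{e\}$ by the icc-type hypothesis. The only difference is cosmetic---you phrase this via the $1$-cocycle $\phi$ whereas the paper works directly with $\cF_1 \cF_1^{-1}$---and note the small typo: your $h$ should lie in $\cG$, not $G$.
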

\begin{proof}
The equivalence of \ref{comult.bbb} and \ref{comult.ccc} follows from Lemma \ref{lem.prel-weak-mixing}. The implication \ref{comult.aaa} $\Rightarrow$ \ref{comult.bbb} is trivial by taking $\cF = \{h\}$. Conversely assume that \ref{comult.bbb} holds. By Lemma \ref{lem.prel-weak-mixing}, we can take an $h \in \cG$ such that $\cF_1 := \{\gamma_1(g) h \gamma_2(g)^{-1} \mid g \in G\}$ is a finite set. It follows that $\cF_1 \cF_1^{-1}$ is a finite subset of $\cG$ that is globally invariant under $(\Ad \gamma_1(g))_{g \in G}$. By our assumptions, it follows that $\cF_1 \cF_1^{-1} = \{e\}$. This means that $\cF_1$ is a singleton. So $\cF_1 = \{h\}$ and we conclude that $\gamma_1(g) = h \gamma_2(g) h^{-1}$ for all $g \in G$.
\end{proof}

\begin{lemma}\label{lem.funny}
Let $\Lambda$ be an icc group and $\al,\beta \in \Aut(L \Lambda)$. Denote by $(v_s)_{s \in \Lambda}$ the canonical group of unitaries generating $L \Lambda$. Let $\Delta : L \Lambda \recht L \Lambda \ovt L \Lambda : \Delta(v_s) = v_s \ot v_s$ be the comultiplication. If $(\al \ot \beta)\Delta(L \Lambda) \prec \Delta(L \Lambda)$, there exist unitaries $V,W \in L \Lambda$, characters $\om,\mu : \Lambda \recht \T$ and an automorphism $\delta \in \Aut(\Lambda)$ such that $\al(v_s) = \om(s) \, V v_{\delta(s)} V^*$ and $\beta(v_s) = \mu(s) \, W v_{\delta(s)} W^*$ for all $s \in \Lambda$.
\end{lemma}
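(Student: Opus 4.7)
The plan is to use Popa's intertwining-by-bimodules theorem to convert the hypothesis $(\al \ot \beta)\Delta(L\Lambda) \prec \Delta(L\Lambda)$ into an explicit algebraic identity, and then unwind it as in \cite[Section~7]{IPV10} using that $\Lambda$ is icc. Theorem~\ref{thm.intertwining} yields $n \in \N$, a nonzero partial isometry $U_0 \in \M_{1,n}(\C) \ot L\Lambda \ovt L\Lambda$, a projection $q \in \M_n(\C) \ot \Delta(L\Lambda)$, and a normal $*$-homomorphism $\theta$ with $x U_0 = U_0 \theta(x)$. Since $\Lambda$ is icc, the diagonal conjugation action $\Lambda \actson \Lambda \times \Lambda$ has $\{(e,e)\}$ as its only finite orbit, so both $\Delta(L\Lambda)$ and $(\al \ot \beta)\Delta(L\Lambda)$ have trivial relative commutant in $L\Lambda \ovt L\Lambda$. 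Combined with a trace comparison giving $(\Tr \ot \tau)(q) = 1$ in the II$_1$ factor $\M_n(\C) \ot \Delta(L\Lambda)$, one can absorb the amplification and replace $U_0$ by an honest unitary $U \in \cU(L\Lambda \ovt L\Lambda)$ and $\theta$ by a group homomorphism $s \mapsto w_s \in \cU(L\Lambda)$ such that
\[
\al(v_s) \ot \beta(v_s) \;=\; U\,(w_s \ot w_s)\,U^* \qquad \text{for all}\; s \in \Lambda.
\]

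Next I would identify the $w_s$. Because $U$ is unitary and $\tau$ trace-preserving, $\tau(w_s) = \tau(\al(v_s))\tau(\beta(v_s)) = \delta_{s,e}$, so $\{w_s\}$ is orthonormal in $L^2(L\Lambda)$. Conjugating $(\al \ot \beta)\Delta(L\Lambda)' \cap L\Lambda \ovt L\Lambda = \C 1$ by $U$ gives $\Delta((w_s)'')' \cap L\Lambda \ovt L\Lambda = \C 1$; were $(w_s)''$ a proper subfactor of $L\Lambda$, any nonzero element of its relative commutant placed as $x \ot 1$ would commute with every $\Delta(w_s) = w_s \ot w_s$, contradicting irreducibility. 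Hence $\{w_s\}$ is an orthonormal group of unitaries generating $L\Lambda$, so it forms an orthonormal basis of $L^2(L\Lambda)$. An argument as in \cite[Lemma~3.2]{IPV10} (or a direct Fourier/convolution computation using the group law together with orthonormality) then forces $w_s = \eta(s)\, v_{\delta(s)}$ for some character $\eta : \Lambda \recht \T$ and group automorphism $\delta \in \Aut(\Lambda)$.

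Substituting gives $\al(v_s) \ot \beta(v_s) = \eta(s)^2\, U\, \Delta(v_{\delta(s)})\, U^*$ for all $s$. The remaining step, and the main obstacle, is to show that $U$ must be of the form $V \ot W$ up to a scalar, for unitaries $V, W \in L\Lambda$. I would handle this by introducing the commuting subfactors $\cM_1 := U(L\Lambda \ot 1)U^*$ and $\cM_2 := U(1 \ot L\Lambda)U^*$, which provide a second tensor decomposition $\cM_1 \ovt \cM_2 = L\Lambda \ovt L\Lambda$, and comparing the two factorizations of $\al(v_s) \ot \beta(v_s)$ into commuting unitaries: on the one hand as $U(v_{\delta(s)} \ot 1)U^* \cdot U(1 \ot v_{\delta(s)})U^*$ in $\cM_1 \cdot \cM_2$, and on the other (up to the scalar $\eta(s)^2$) as $(\al(v_s) \ot 1)(1 \ot \beta(v_s))$ in $(L\Lambda \ot 1) \cdot (1 \ot L\Lambda)$. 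Uniqueness of commuting-unitary factorizations modulo scalars in a tensor product, combined with the fact that $\{\al(v_s) \ot 1\}_s$ generates $L\Lambda \ot 1$, forces $\cM_1 = L\Lambda \ot 1$ and $\cM_2 = 1 \ot L\Lambda$, whence $U = V \ot W$ up to a scalar. Unpacking this identity produces characters $\om, \mu : \Lambda \recht \T$ with $\om \mu = \eta^2$ and the required form $\al(v_s) = \om(s) V v_{\delta(s)} V^*$, $\beta(v_s) = \mu(s) W v_{\delta(s)} W^*$. The extraction of the tensor-product form of $U$ is the most delicate point of the argument; the earlier steps are essentially standard manipulations with intertwining-by-bimodules together with icc factor arguments.
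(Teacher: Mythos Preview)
Your approach has a genuine gap at the third step. You claim that a group of unitaries $\{w_s\}_{s \in \Lambda}$ in $L\Lambda$ forming an orthonormal basis of $L^2(L\Lambda)$ must satisfy $w_s = \eta(s)\, v_{\delta(s)}$ (up to an inner conjugation that can be absorbed into $U$). But your justification---orthonormality plus the group law $w_s w_t = w_{st}$---applies verbatim to $w_s := \gamma(v_s)$ for \emph{any} automorphism $\gamma \in \Aut(L\Lambda)$, and would therefore prove that every automorphism of $L\Lambda$ is group-like. That is false for a general icc group $\Lambda$: non-group-like automorphisms abound when $\Lambda$ is icc amenable (where $L\Lambda$ is the hyperfinite II$_1$ factor), and \cite[Theorem~1.2]{IPV10} exhibits them for many nonamenable wreath products as well. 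No ``direct Fourier/convolution computation'' can deliver this; the step is essentially as hard as W$^*$-superrigidity of $\Lambda$. There are further soft spots: in your second step, $\Delta(N)' \cap (L\Lambda \ovt L\Lambda) = \C$ for $N=(w_s)''$ only shows that $N$ is an \emph{irreducible} subfactor of $L\Lambda$, not that $N = L\Lambda$; and in your final step, uniqueness of commuting-unitary factorizations holds only within a \emph{fixed} tensor decomposition and cannot by itself force two a priori different tensor splittings of $L\Lambda \ovt L\Lambda$ to coincide.

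The paper avoids all of this by never seeking a unitary conjugacy. From $(\al \ot \beta)\Delta(L\Lambda) \prec \Delta(L\Lambda)$ and Theorem~\ref{thm.intertwining}, the absence of an asymptotically orthogonalizing sequence yields finitely many $x_i, y_i \in \Lambda$ and $\delta>0$ with
\[
\sum_{i=1}^k \bigl\|E_{\Delta(L\Lambda)}\bigl(v_{x_i}^*\,\al(v_s) \ot \beta(v_s)\,v_{y_i}^*\bigr)\bigr\|_2^2 \;\geq\; \delta \quad\text{for all } s \in \Lambda.
\]
Expanding the left side as $\sum_{i,t} |\tau(v_{x_i t}^* \al(v_s))|^2 \, |\tau(v_{t y_i}^* \beta(v_s))|^2$ and bounding the first factor by the height $h_\Lambda(\al(v_s)) := \max_t |\tau(v_t^* \al(v_s))|$ gives $h_\Lambda(\al(v_s)) \geq \sqrt{\delta/k}$ uniformly in $s$. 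The substantial input is then \cite[Theorem~3.1]{IPV10}, which converts a uniform height lower bound into the group-like form of $\al$. By symmetry one gets $\beta(v_s) = \mu(s)\,W v_{\delta_2(s)} W^*$ with a possibly different $\delta_2$, and Lemma~\ref{lem.rel-icc-intertwine} is used to show that $\delta_1$ and $\delta_2$ agree up to an inner automorphism of $\Lambda$.
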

\begin{proof}
We start by proving the following claim: if $\Lambda$ admits a sequence of elements $s_n \in \Lambda$ such that
\begin{equation}\label{eq.assum-by-contra}
\lim_n \|E_{\Delta(L \Lambda)}( v_x^* \al(v_{s_n}) \ot \beta(v_{s_n}) v_y^*) \|_2 = 0 \quad\text{for all}\;\; x,y \in \Lambda \; ,
\end{equation}
then $(\al \ot \beta)\Delta(L \Lambda) \not\prec \Delta(L \Lambda)$. Indeed, if \eqref{eq.assum-by-contra} holds, we multiply left and right by elements of the form $\Delta(v_a)$, $\Delta(v_b)$ and conclude that
$$\lim_n \|E_{\Delta(L \Lambda)}\bigl( (v_x \ot v_y) \, (\al \ot \beta)\Delta(v_{s_n}) \, (v_a \ot v_b)\bigr) \|_2 = 0 \quad\text{for all}\;\; x,y,a,b, \in \Lambda \; .$$
Using $\|\,\cdot\,\|_2$-approximations, it follows that the same holds when we replace $v_x \ot v_y$ and $v_a \ot v_b$ by arbitrary elements of $L \Lambda \ovt L \Lambda$. This then means that $(\al \ot \beta)\Delta(L \Lambda) \not\prec \Delta(L \Lambda)$ and hence this proves the claim.

Our assumption is that $(\al \ot \beta)\Delta(L \Lambda) \prec \Delta(L \Lambda)$. So by the claim above, there is no sequence of elements $s_n \in \Lambda$ satisfying \eqref{eq.assum-by-contra}. This means that there are finitely many $x_i,y_i \in \Lambda$, with $i=1,\ldots,k$, and a $\delta > 0$ such that
$$
\sum_{i=1}^k \|E_{\Delta(L \Lambda)}( v_{x_i}^* \al(v_{s}) \ot \beta(v_{s}) v_{y_i}^*) \|_2^2 \geq \delta \quad\text{for all}\;\; s \in \Lambda \; .
$$
The left hand side can be computed and we conclude that
\begin{equation}\label{eq.good-inequ}
\sum_{i=1}^k \sum_{t \in \Lambda} |\tau(v_{x_i t}^* \al(v_s))|^2 \, |\tau(v_{t y_i}^* \beta(v_s))|^2 \geq \delta \quad\text{for all}\;\; s \in \Lambda \; .
\end{equation}
As in \cite[Formula (3.1)]{IPV10}, we define the height of an element $a \in L\Lambda$ as
$$h_\Lambda(a) := \max \{ |\tau(v_t^* a)| \mid t \in \Lambda \} \; .$$
Using \eqref{eq.good-inequ}, we find that for all $s \in \Lambda$, we have
\begin{align*}
\delta &\leq \sum_{i=1}^k \sum_{t \in \Lambda} |\tau(v_{x_i t}^* \al(v_s))|^2 \, |\tau(v_{t y_i}^* \beta(v_s))|^2 \\
&\leq h_\Lambda(\al(v_s))^2 \; \sum_{i=1}^k \sum_{t \in \Lambda} |\tau(v_{t y_i}^* \beta(v_s))|^2 \\
&= k \; h_\Lambda(\al(v_s))^2 \; .
\end{align*}
So we get that $h_\Lambda(\al(v_s)) \geq \sqrt{\delta/k}$ for all $s \in \Lambda$. It then follows from \cite[Theorem 3.1]{IPV10} that there exist a unitary $V \in L \Lambda$, a character $\om : \Lambda \recht \T$ and an automorphism $\delta_1 \in \Aut(\Lambda)$ such that $\al(v_s) = \om(s) \, V v_{\delta_1(s)} V^*$ for all $s \in \Lambda$.

By symmetry, we find the same description of the automorphism $\beta$, yielding a unitary $W \in L \Lambda$, a character $\mu : \Lambda \recht \T$ and an automorphism $\delta_2 \in \Aut(\Lambda)$ such that $\beta(v_s) = \mu(s) \, W v_{\delta_2(s)} W^*$ for all $s \in \Lambda$. It remains to prove that up to an inner conjugacy, $\delta_1 = \delta_2$. Replacing $\al$ by $(\Ad V^*) \circ \al$ and replacing $\beta$ by $(\Ad W^*) \circ \beta$, we still have that $(\al \ot \beta)\Delta(L \Lambda) \prec \Delta(L \Lambda)$. So there exist finitely many $x_i,y_i \in \Lambda$, with $i=1,\ldots,k$, and a $\delta > 0$ such that \eqref{eq.good-inequ} holds. Since now $\al(v_s) = \om(s) \, v_{\delta_1(s)}$ and $\beta(v_s) = \mu(s) \, v_{\delta_2(s)}$, the left hand side of \eqref{eq.good-inequ} is zero, unless there exists an $i \in \{1,\ldots,k\}$ and a $t \in \Lambda$ satisfying $\delta_1(s) = x_i t$ and $\delta_2(s) = t y_i$. This means that for every $s \in \Lambda$, there exists an $i \in \{1,\ldots,k\}$ such that $\delta_1(s) y_i \delta_2(s)^{-1} = x_i$. Since $\Lambda$ is icc, it then follows from Lemma \ref{lem.rel-icc-intertwine} that $\delta_1$ and $\delta_2$ are equal up to inner conjugacy.
\end{proof}

Let $\Lambda$ be an icc group and assume that $L \Lambda$ does not have property Gamma, so that $\Out(L \Lambda)$ is a Polish group (see Section \ref{sec.inner-amen} for notations and terminology). For every character $\om \in \Lambdah$, we denote by $\al_\om$ the automorphism of $L \Lambda$ given by $\al_\om(v_s) = \om(s) \, v_s$ for all $s \in \Lambda$. Using the icc property, one checks that the map $\om \mapsto \al_\om$ embeds $\Lambdah$ continuously into $\Out(L \Lambda)$. Since $\Lambdah$ is compact, we can thus view $\Lambdah$ as a compact subgroup of $\Out(L \Lambda)$.

A countable subgroup $\cA$ of a Polish group $\cB$ is said to be \emph{discrete} if there exists a neighborhood $\cU$ of the identity $e$ in $\cB$ such that $\cU \cap \cA = \{e\}$.

\begin{lemma}\label{lem.comult-rel-commutant-bimod}
Let $M_0$ be a II$_1$ factor without property Gamma. Let $r > 0$ and $M_0^r = L \Lambda$ for some countable group $\Lambda$. Denote by $\Delta : M_0 \recht (M_0 \ovt M_0)^r$ the amplified comultiplication as in Section \ref{sec.comult}.

Assume that $M$ is a tracial von Neumann algebra with $M_0 \subset M$ and $M_0' \cap M = \C 1$. Let $\cL \subset \cN_M(M_0)$ be a subgroup such that $M = (M_0 \cup \cL)\dpr$. Finally assume that the image of $\cL$ in $\Out(M_0)$ is a discrete torsion-free subgroup. Then the following holds.
\begin{enumerate}[label=(\alph*)]
\item\label{rel-comm-bimod-one} If $\cH \subset L^2((M \ovt M)^r) \ominus L^2(\Delta(M_0))$ is a nonzero $\Delta(M_0)$-$\Delta(M_0)$-subbimodule of finite left $\Delta(M_0)$-dimension, then there exist automorphisms $\beta_1,\ldots,\beta_k \in \Aut(M_0)$ and a unitary $\psi : \cH \recht L^2(M_0)^{\oplus k} : \xi \mapsto (\psi_1(\xi),\ldots,\psi_k(x))$ such that
    $$\psi_i(\Delta(x) \, \xi \, \Delta(y)) =  x \, \psi_i(\xi) \, \beta_i(y) \quad\text{for all}\;\; x,y \in M_0, \xi \in \cH, i = 1,\ldots,k,$$
    and such that every $\beta_i$ generates a discrete infinite subgroup of $\Out(M_0)$.
\item\label{rel-comm-bimod-two} We have $\Delta(M_0)' \cap (M \ovt M)^r = \C 1$.
\end{enumerate}
\end{lemma}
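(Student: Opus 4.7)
The plan is to leverage the explicit structure of $M$. Since $M = (M_0 \cup \cL)''$ with $M_0' \cap M = \C 1$ and the image $\cG$ of $\cL$ in $\Out(M_0)$ is discrete and torsion-free, we may (up to a $2$-cocycle) view $M$ as a crossed product $M_0 \rtimes_\al \cG$ with $\al$ outer. Correspondingly $M \ovt M = (M_0 \ovt M_0) \rtimes (\cG \times \cG)$, and $L^2((M \ovt M)^r)$ decomposes as the orthogonal direct sum of pieces $L^2((M_0 \ovt M_0)^r) u_{(g,h)}$ indexed by $(g,h) \in \cG \times \cG$. As a $\Delta(M_0)$-$\Delta(M_0)$-bimodule each such piece is isomorphic to $L^2((M_0\ovt M_0)^r)$ equipped with the standard left $\Delta$-action and the right action $\xi \cdot \Delta(b) = \xi (\al_g \ot \al_h)(\Delta(b))$. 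Here the absence of property Gamma for $M_0$ is used to make $\Out(M_0)$ Polish and to realize $\Lambdah$ as a compact subgroup of $\Out(M_0)$.

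The core of (a) shows that only \emph{diagonal} pieces can harbor finite-left-dimensional subbimodules. If the $(g,h)$-piece contains such a subbimodule, then Theorem \ref{thm.intertwining} yields $(\al_g^{-1} \ot \al_h^{-1})\Delta(M_0) \prec \Delta(M_0)$ inside $M_0 \ovt M_0$, and Lemma \ref{lem.funny} supplies $\delta \in \Aut(\Lambda)$ together with characters $\om, \mu \in \Lambdah$ so that, modulo $\Inn(M_0)$, $\al_h^{-1} \al_g$ agrees with $\al_{\mu \om^{-1}}$. Thus the class of $h^{-1}g$ in $\Out(M_0)$ lies in the compact subgroup $\Lambdah$; but the copy of $\cG$ in $\Out(M_0)$ is discrete and torsion-free, so $\cG \cap \Lambdah = \{e\}$ and $g = h$. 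On the diagonal, the identity $(\al_g \ot \al_g)\circ\Delta = \Delta \circ \al_g$ rewrites the right action as $\xi \mapsto \xi \Delta(\al_g(b))$, so the piece is $L^2((M_0\ovt M_0)^r)$ with right $\Delta(M_0)$-action twisted by $\al_g$. Working in the basis $v_s \ot v_t$ one has $\Delta(v_a)(v_s \ot v_t)\Delta(v_b) = v_{asb} \ot v_{atb}$, so the $\Lambda \times \Lambda$-orbits are indexed by the conjugacy class of $s^{-1}t$; the trivial class yields $L^2(\Delta(M_0))$, while every nontrivial class contributes a bimodule of infinite left dimension because $\Lambda$ is icc. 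Consequently, any finite-left-dimensional subbimodule of the $(g,g)$-piece is contained in $L^2(\Delta(M_0))u_{(g,g)} \cong L^2(M_0)_{\al_g}$. Assembling, $\cH$ decomposes as $\bigoplus_{i=1}^k L^2(M_0)_{\al_{g_i}}$ with $g_i \in \cG$; the orthogonality $\cH \perp L^2(\Delta(M_0))$ excludes $g_i = e$, and the discreteness and torsion-freeness of $\cG$ force each $\al_{g_i}$ to generate a discrete infinite subgroup of $\Out(M_0)$.

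Part (b) is then a short deduction from (a). Any $z \in \Delta(M_0)' \cap (M \ovt M)^r$ spans a one-dimensional $\Delta(M_0)$-$\Delta(M_0)$-subbimodule on which left and right actions coincide. Decompose $z = z_0 + z_1$ along $L^2(\Delta(M_0))$ and its orthogonal complement; factoriality of $\Delta(M_0)$ forces $z_0 \in \C 1$, while if $z_1 \neq 0$ then (a) applied to $\C z_1$ would produce some $\beta_i = \id$ in $\Out(M_0)$, contradicting that $\beta_i$ generates a discrete \emph{infinite} subgroup. Hence $z = z_0 \in \C 1$. The main anticipated obstacle is handling the cocycle twist in the crossed-product presentation of $M$ together with the amplification $r$: one has to verify that the orthogonal bimodule decomposition indexed by $\cG \times \cG$ survives amplification and that Remark \ref{rem.amplify-explicit}'s explicit relation between the un-amplified and amplified comultiplications legitimately allows invocation of Lemma \ref{lem.funny} in the proof of (a).
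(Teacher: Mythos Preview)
Your overall strategy matches the paper's: decompose $L^2((M\ovt M)^r)$ according to the normalizing unitaries of $M_0^r \ovt M_0^r$, project $\cH$ onto each piece, apply Lemma~\ref{lem.funny} to the pieces that survive, and then identify each surviving piece with a twisted $L^2(M_0)$. Your deduction of \ref{rel-comm-bimod-two} from \ref{rel-comm-bimod-one} is exactly the paper's. Your reduction to the diagonal $g=h$ is a pleasant extra observation that the paper does not make explicit (the paper allows off-diagonal $W=V_1\ot V_2$ throughout and only uses that \emph{at least one} of $\Ad V_1,\Ad V_2$ generates a discrete infinite subgroup).

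There is, however, a genuine gap in the step ``On the diagonal, the identity $(\al_g \ot \al_g)\circ\Delta = \Delta \circ \al_g$ rewrites the right action\ldots''. This identity is simply false for a general automorphism $\al_g$ of $M_0^r=L\Lambda$; it would force $\al_g$ to be a Hopf-algebra automorphism, i.e.\ induced by a group automorphism of $\Lambda$ with no character twist. What Lemma~\ref{lem.funny} actually gives you (after absorbing inner automorphisms) is $\al_g(v_s)=\om(s)\,v_{\delta(s)}$, and then
\[
(\al_g\ot\al_g)\Delta(v_s)=\om(s)^2\,v_{\delta(s)}\ot v_{\delta(s)}=\Delta(\gamma_g(v_s)),\qquad \gamma_g(v_s):=\om(s)^2\,v_{\delta(s)}.
\]
So the correct right twist on the diagonal piece is by $\gamma_g$, not by $\al_g$, and in $\Out(M_0^r)$ one only has $\gamma_g\in\al_g\cdot\Lambdah$. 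Consequently your final sentence---``discreteness and torsion-freeness of $\cG$ force each $\al_{g_i}$ to generate a discrete infinite subgroup''---does not finish the job, because the $\beta_i$ you have produced are the $\gamma_{g_i}$, which need not lie in the image of $\cG$.

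The missing argument is the one the paper supplies at the very end of its proof: since $\al_g(v_s)=\om(s)\,v_{\delta(s)}$, the element $\al_g$ normalizes the compact subgroup $\Lambdah\subset\Out(M_0^r)$; together with the fact that $\al_g$ itself generates a discrete infinite subgroup (this is where your $g\neq e$ and the torsion-freeness of $\cG$ enter), one gets that $\Lambdah$ and $\al_g$ generate a closed copy of $\Lambdah\rtimes\Z$ inside $\Out(M_0^r)$. Inside that semidirect product, $\gamma_g\in\al_g\Lambdah$ projects to $1\in\Z$, and it is then immediate that $\langle\gamma_g\rangle$ is discrete and infinite. With this correction in place your argument goes through and is essentially the paper's proof, organized slightly differently.
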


Note that the setup of Lemma \ref{lem.comult-rel-commutant-bimod} would allow to write $M$ as the cocycle crossed product of $M_0$ and an outer cocycle action of $\cL / (\cL \cap \Inn M_0)$ on $M_0$, but we do not need that formalism here.

\begin{proof}
First note that statement \ref{rel-comm-bimod-two} is a consequence of statement \ref{rel-comm-bimod-one}. Take an element $T$ in $\Delta(M_0)' \cap (M \ovt M)^r$ and write $S := T - E_{\Delta(M_0)}(T)$. Since $M_0$ is a factor, it suffices to prove that $S = 0$. So assume that $S \neq 0$. Denote by $\cH$ the closure of $\Delta(M_0) S$. Then $\cH$ is a $\Delta(M_0)$-$\Delta(M_0)$-subbimodule of $L^2((M \ovt M)^r) \ominus L^2(\Delta(M_0))$ that has finite left dimension. By construction $\cH$ contains the nonzero vector $S$ satisfying $\Delta(x) S = S \Delta(x)$ for all $x \in M_0$. Write $\cH$ as in \ref{rel-comm-bimod-one}. Since all automorphisms $\beta_i$ are outer, we have that $\psi_i(S) = 0$ for all $i \in \{1,\ldots,k\}$. So $S = 0$, contradicting our assumption.

We now start proving statement \ref{rel-comm-bimod-one}.
Take a projection $p \in \M_n(\C) \ot M_0$ with $(\Tr \ot \tau)(p) = r$. Realize $M_0^r := p(\M_n(\C) \ot M_0)p$ and $(M_0 \ovt M_0)^r = M_0^r \ovt M_0$. Denote by $\Delta : L \Lambda \recht L \Lambda \ovt L \Lambda : \Delta(v_s) = v_s \ot v_s$ the original comultiplication. During the proof, to improve the clarity of the exposition, we denote the amplified comultiplication by $\Delta_0 : M_0 \recht M_0^r \ovt M_0$. The relation between $\Delta_0$ and $\Delta$ has been concretized in Remark \ref{rem.amplify-explicit}.

Put $M^r := p(\M_n(\C) \ot M)p$ so that literally $M_0^r \subset M^r$. Let $\cH \subset L^2(M^r \ovt M) \ominus L^2(\Delta_0(M_0))$ be a $\Delta_0(M_0)$-$\Delta_0(M_0)$-subbimodule of finite left $\Delta_0(M_0)$-dimension. Using the notation of Remark \ref{rem.amplify-explicit}, we put
$$\cH' := Z \, (\id \ot \id \ot \zeta^{-1})(\cH \ot \M_n(\C)) \, Z^*$$
and notice that $\cH' \subset L^2(M^r \ovt M^r) \ominus L^2(\Delta(M_0^r))$ is a nonzero $\Delta(M_0^r)$-$\Delta(M_0^r)$-subbimodule of finite left $\Delta(M_0^r)$-dimension. To conclude the proof of the lemma, we have to find automorphisms $\beta_1,\ldots,\beta_k \in \Aut(M_0^r)$ and a unitary $\psi : \cH' \recht L^2(M_0^r)^{\oplus k} : \xi \mapsto (\psi_1(\xi),\ldots,\psi_k(x))$ such that
$$\psi_i(\Delta(x) \, \xi \, \Delta(y)) =  x \, \psi_i(\xi) \, \beta_i(y) \quad\text{for all}\;\; x,y \in M_0^r, \xi \in \cH', i = 1,\ldots,k,$$
and such that every $\beta_i$ generates an infinite discrete subgroup of $\Out(M_0^r)$.

By our assumptions on $M_0 \subset M$, we can choose a subset $\cL_0 \subset \cN_M(M_0)$ such that
$$L^2(M) = L^2(M_0) \oplus \bigoplus_{V \in \cL_0} L^2(M_0) V$$
and such that for every $V \in \cL_0$, the automorphism $\Ad V$ of $M_0$ generates a discrete infinite subgroup of $\Out(M_0)$. Fix $V \in \cL_0$. Take a partial isometry $v \in \M_n(\C) \ot M_0$ such that $vv^* = p$ and $v^* v = (\id \ot \Ad V)(p)$. Write $V' := v(1 \ot V)$ and note that $V' \in \cN_{M^r}(M_0^r)$. As such, we find a subset $\cL_1 \subset \cN_{M^r}(M_0^r)$ such that
$$L^2(M^r) = L^2(M_0^r) \oplus \bigoplus_{V \in \cL_1} L^2(M_0^r) V$$
and such that for every $V \in \cL_1$, the automorphism $\Ad V$ of $M_0^r$ generates a discrete infinite subgroup of $\Out(M_0^r)$.

Define the subset $\cL_2 \subset \cN_{M^r \ovt M^r}(M_0^r \ovt M_0^r)$ given by
$$\cL_2 := \{ 1 \ot V \mid V \in \cL_1 \} \cup \{V \ot 1 \mid V \in \cL_1 \} \cup \{V_1 \ot V_2 \mid V_1,V_2 \in \cL_1 \} \; .$$
We get that
\begin{equation}\label{eq.orthog-decomp-W}
L^2(M^r \ovt M^r) = L^2(M_0^r \ovt M_0^r) \oplus \bigoplus_{W \in \cL_2} L^2(M_0^r \ovt M_0^r) W \; .
\end{equation}
Also for every $W \in \cL_2$, the automorphism $\Ad W$ of $M_0^r \ovt M_0^r$ is of the form $\al_W \ot \beta_W$, where at least one of the $\al_W$, $\beta_W$ generates a discrete infinite subgroup of $\Out(M_0^r)$.

Denote by $P_0$ the orthogonal projection of $L^2(M^r \ovt M^r)$ onto the closed subspace $L^2(M_0^r \ovt M_0^r)$ and define $\cH_0$ as the closure of $P_0(\cH')$. Then $\cH_0 \subset L^2(M_0^r \ovt M_0^r) \ominus L^2(\Delta(M_0^r))$ is a $\Delta(M_0^r)$-$\Delta(M_0^r)$-subbimodule of finite left dimension. By \cite[Proposition 7.2.3]{IPV10}, we get that $\cH_0 = \{0\}$.

For every $W \in \cL_2$, denote by $P_W$ the orthogonal projection of $L^2(M^r \ovt M^r)$ onto the closed subspace $L^2(M_0^r \ovt M_0^r) W$ and define
$$\vphi_W : L^2(M^r \ovt M^r) \recht L^2(M_0^r \ovt M_0^r) : \vphi_W(\xi) = P_W(\xi) W^* \; .$$
Since $W$ normalizes $M_0^r \ovt M_0^r$ and since \eqref{eq.orthog-decomp-W} is an orthogonal decomposition, we get that
$$\vphi_W( \Delta(x) \, \xi \, \Delta(y)) = \Delta(x) \, \vphi_W(\xi) \, (\al_W \ot \beta_W)\Delta(y)$$
for all $x,y \in M_0^r$ and all $\xi \in L^2(M^r \ovt M^r)$. Denote by $\cH_W$ the closure of $\vphi_W(\cH')$. Below we prove the following statement: if $\cH_W \neq \{0\}$, then there exists a unitary $\psi_W : \cH_W \recht L^2(M_0^r)$ and an automorphism $\gamma_W \in \Aut(M_0^r)$ such that
\begin{equation}\label{eq.equiv-beta}
\psi_W(\Delta(x) \, \xi \, (\al_W \ot \beta_W)\Delta(y)) = x \, \psi_W(\xi) \, \gamma_W(y)
\end{equation}
for all $x,y \in M_0^r$ and all $\xi \in \cH_W$, and such that $\gamma_W$ generates a discrete infinite subgroup of $\Out(M_0^r)$. For the moment, we assume that the statement is proven and deduce the lemma from it. Whenever $\cH_W \neq \{0\}$, we denote by $\cK_W$ the $M_0^r$-$M_0^r$-bimodule $L^2(M_0^r)$ with bimodule action $x \cdot \xi \cdot y = x \xi \gamma_W(y)$. Then $\psi_W \circ \vphi_W : \cH' \recht \cK_W$ is a bimodular map with dense range. So, $\cK_W$ is isomorphic with a subbimodule of $\cH'$. Since $\cH'$ has finite left dimension and since $\cH_0 = \{0\}$, it follows that $\cH'$ is isomorphic with the direct sum of finitely many $\cK_W$'s. This proves the lemma.

So it remains to prove the statement above. Assume that $\cH_W \neq \{0\}$. By construction, $\cH_W$ is a $\Delta(M_0^r)$-$(\al_W \ot \beta_W)\Delta(M_0^r)$-subbimodule of $L^2(M_0^r \ovt M_0^r)$ of finite left dimension. By Theorem \ref{thm.intertwining}, this means that $(\al_W \ot \beta_W)\Delta(M_0^r) \prec \Delta(M_0^r)$. By Lemma \ref{lem.funny}, there exist characters $\om,\mu : \Lambda \recht \T$ and an automorphism $\delta \in \Aut(\Lambda)$ such that, after unitarily conjugating $\al_W$ and $\beta_W$, we have that $\al_W(v_s) = \om(s) \, v_{\delta(s)}$ and $\beta_W(v_s) = \mu(s) \, v_{\delta(s)}$ for all $s \in \Lambda$. Note that $(\al_W \ot \beta_W)\Delta(v_s) = \Delta(\gamma_W(v_s))$, where the automorphism $\gamma_W \in \Aut(M_0^r)$ is defined by the formula $\gamma_W(v_s) = \om(s) \mu(s) \, v_{\delta(s)}$.

So $(\al_W \ot \beta_W)\Delta(M_0^r) = \Delta(M_0^r)$. We get in particular that $\cH_W$ is a nonzero $\Delta(M_0^r)$-$\Delta(M_0^r)$-subbimodule of $L^2(M_0^r \ovt M_0^r)$ that has finite left dimension. It then follows from \cite[Proposition 7.2.3]{IPV10} that $\cH_W \subset L^2(\Delta(M_0^r))$. Since $M_0^r$ is a factor and $\cH_W \neq \{0\}$, we get that $\cH_W = L^2(\Delta(M_0^r))$. We can thus define $\psi : \cH_W \recht L^2(M_0^r)$ as being $\Delta^{-1}$. By construction, \eqref{eq.equiv-beta} holds. It remains to prove that $\gamma_W$ generates an infinite discrete subgroup of $\Out(M_0^r)$.

We know that at least one of the $\al_W$, $\beta_W$ generates an infinite discrete subgroup of $\Out(M_0^r)$. Assume that this is the case for $\al_W$. View $\al_W$ as an element of $\Out(M_0^r)$ and view $\Lambdah$ as a compact subgroup of $\Out(M_0^r)$. Since $\al_W(v_s) = \om(s) \, v_{\delta(s)}$ for all $s \in \Lambda$, we have that $\al_W$ normalizes $\Lambdah$. Since $\Lambdah$ is compact and since $\al_W$ generates an infinite discrete subgroup, it follows that $\Lambdah$ and $\al_W$ together generate a copy of $\Lambdah \rtimes \Z$ as a closed subgroup of $\Out(M_0^r)$. Since $\gamma_W \in \al_W \Lambdah$, it then follows that also $\gamma_W$ generates an infinite discrete subgroup of $\Out(M_0^r)$.
\end{proof}

For later use, we end this section with yet another elementary lemma.

\begin{lemma}\label{lem.descr-aut}
Let $\Lambda$ be a countable group and $\Delta : L \Lambda \recht L \Lambda \ovt L \Lambda$ the comultiplication given by $\Delta(v_s) = v_s \ot v_s$ for all $s \in \Lambda$.
If $\al,\beta \in \Aut(L \Lambda)$ are automorphisms that satisfy $(\al \ot \id) \circ \Delta = \Delta \circ \beta$, then there exists a character $\om : \Lambda \recht \T$ such that $\al = \beta = \al_\om$, where $\al_\om(v_s) = \om(s) v_s$ for all $s \in \Lambda$.
\end{lemma}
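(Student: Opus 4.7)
The plan is a direct coefficient-comparison argument using the fact that $\{v_t\}_{t \in \Lambda}$ and $\{v_t \otimes v_u\}_{(t,u) \in \Lambda \times \Lambda}$ are orthonormal bases of $L^2(L\Lambda)$ and $L^2(L\Lambda \ovt L\Lambda)$ respectively.

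First I would expand both sides of the identity $(\alpha \ot \id)\Delta(v_s) = \Delta(\beta(v_s))$ in these bases. Writing $a_s(t) := \tau(v_t^* \alpha(v_s))$ and $c_s(t) := \tau(v_t^* \beta(v_s))$, the left-hand side becomes $\sum_t a_s(t)\, v_t \ot v_s$, while the right-hand side becomes $\sum_t c_s(t)\, v_t \ot v_t$. Taking the inner product against $v_x \ot v_y$ yields
\begin{equation*}
a_s(x)\, \delta_{y,s} \;=\; c_s(x)\, \delta_{y,x} \quad\text{for all}\;\; s,x,y \in \Lambda.
\end{equation*}

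Next I would extract the consequences of this identity. Fixing $s$ and taking $y \neq s$, setting $x = y$ gives $c_s(y) = 0$; hence $\beta(v_s)$ is supported on $\{s\}$, i.e.\ $\beta(v_s) = c_s(s)\, v_s$. Since $\beta(v_s)$ is a unitary, $\omega(s) := c_s(s) \in \T$. Setting then $y = s$ in the identity gives $a_s(x) = c_s(x)\, \delta_{x,s}$, so $\alpha(v_s) = \omega(s)\, v_s$ as well. Because $\beta$ (equivalently $\alpha$) is a $*$-homomorphism, the map $s \mapsto \omega(s)$ is multiplicative, hence a character of $\Lambda$. Therefore $\alpha = \beta = \alpha_\omega$, as required.

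There is no real obstacle here: the argument is just Fourier-type coefficient matching, exploiting that $\Delta$ takes $v_s$ to the ``diagonal'' element $v_s \ot v_s$, which forces any intertwiner of the form $(\alpha \ot \id)\Delta = \Delta \circ \beta$ to send each $v_s$ to a scalar multiple of itself.
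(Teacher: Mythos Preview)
Your proof is correct and takes essentially the same approach as the paper's. The paper observes that $\alpha(v_s)\ot v_s=\Delta(\beta(v_s))\in\Delta(L\Lambda)$, which forces $\alpha(v_s)$ to be a scalar multiple of $v_s$; you make the same observation explicit via Fourier-coefficient matching and simultaneously read off $\beta(v_s)$, which is just a more detailed version of the same argument.
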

\begin{proof}
Since $\Delta(\beta(v_s)) = \al(v_s) \ot v_s$, we see that $\al(v_s) \ot v_s \in \Delta(L \Lambda)$. This implies
that $\al(v_s)$ must be a multiple of $v_s$, for all $s \in \Lambda$. So we find a character $\om : \Lambda \recht \T$ such that $\al = \al_\om$. But then also $\beta = \al_\om$.
\end{proof}

\section{Proof of Theorem \ref{thm.main-intro}} \label{sec.proof-main-thm}

Theorem \ref{thm.main-intro} will be a direct consequence of the following general result. Recall from Section \ref{sec.class-S-wa} the notions of weak amenability and class $\cS$.

\begin{theorem}\label{thm.main}
Let $\Gamma$ be a countable group satisfying one of the following conditions.
\begin{enumerate}
\item $\Gamma$ is nonamenable, icc, weakly amenable, belongs to class $\cS$ and admits a bound on the orders of its finite subgroups.
\item $\Gamma = \Gamma_1 * \Gamma_2$ with $|\Gamma_1| \geq 2$ and $|\Gamma_2| \geq 3$.
\end{enumerate}
Let $H$ be a nontrivial abelian group with subgroup $H_0 < H$. Assume that $H/H_0$ is either trivial or torsion-free. Define $\cH := H^{(\Gamma)}$ and consider the homomorphism
$$p_H : H^{(\Gamma)} \recht H : p_H(x) = \sum_{g \in \Gamma} x_g \; .$$
Denote $\cH_0 := p_H^{-1}(H_0)$ and $\cG_0 := \cH_0 \rtimes (\Gamma \times \Gamma)$.

If $\Lambda$ is any countable group and $\pi : L \Lambda \recht (L \cG_0)^r$ is a $*$-isomorphism for some $r > 0$, then $r = 1$ and $\Lambda \cong p_{H'}^{-1}(H_0') \rtimes (\Gamma \times \Gamma)$ for some abelian group $H'$ with subgroup $H_0' < H'$ such that $|H_0| = |H_0'|$ and $H/H_0 \cong H'/H'_0$.

More precisely, there exist group isomorphisms $\delta : \Lambda \recht p_{H'}^{-1}(H_0') \rtimes (\Gamma \times \Gamma)$ and $\gamma : H'/H'_0 \recht H/H_0$, a probability measure preserving isomorphism $\theta : \widehat{H'} \recht \widehat{H}$ satisfying $\theta(k + \eta) = \gammah(k) + \theta(\eta)$ for all $k \in \widehat{H'/H'_0}$ and a.e.\ $\eta \in \widehat{H'}$, a character $\om : \cG_0 \recht \T$ and a unitary $w \in L \cG_0$ such that $\pi = \Ad w \circ \al_\om \circ \pi_\theta \circ \pi_\delta$ where
\begin{itemize}
\item $\pi_\delta : L \Lambda \recht L\bigl(p_{H'}^{-1}(H_0') \rtimes (\Gamma \times \Gamma)\bigr)$ is the $*$-isomorphism given by $\pi_\delta(v_s) = u_{\delta(s)}$ for all $s \in \Lambda$~;
\item $\pi_\theta : L\bigl(p_{H'}^{-1}(H_0') \rtimes (\Gamma \times \Gamma)\bigr) \recht L\bigl(p_H^{-1}(H_0) \rtimes (\Gamma \times \Gamma)\bigr)$ is the natural $*$-isomorphism associated with an infinite tensor product of copies of $\theta$~;
\item $\al_\om$ is the automorphism of $L \cG_0$ given by $\al_\om(u_g) = \om(g) u_g$ for all $g \in \cG_0$.
\end{itemize}
\end{theorem}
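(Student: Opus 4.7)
The plan is to follow the two-step strategy sketched in the introduction, systematically applying the machinery of Sections \ref{sec.spectral-gap}--\ref{sec.comult-rel-commutant}, and then invoking the group-theoretic reconstruction of \cite[Section 7]{IPV10}. Put $M_0 := L\cG_0$, viewed as a subalgebra of the full left-right wreath product algebra $M := L\cH \rtimes (\Gamma \times \Gamma) = B \rtimes G$ with $B := L\cH$, $G := \Gamma \times \Gamma$ acting on $I := \Gamma$ by left-right multiplication, and base $A_0 := LH$. Consider the amplified comultiplication $\Delta : M_0 \to M_0^r \ovt M_0 \subset M^r \ovt M$ of Section \ref{sec.comult}.

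In the first step, I locate $\Delta(LG)$. A direct computation gives $\Stab\{k_1,k_2\} \cong C_\Gamma(k_2 k_1^{-1})$ for $k_1 \neq k_2$; under either hypothesis on $\Gamma$, this centralizer is either amenable (case (1), by bi-exactness and the bound on finite subgroups) or amenable relative to one of the vertex subgroups $\Gamma_i$ (case (2), by a Kurosh-type analysis). So Theorem \ref{thm.spectral-gap-rigidity} applies with $\kappa = 2$ to $P := \Delta(LG)$, where the nonamenability assumption is verified via Proposition \ref{prop.comult}.\ref{comult-four} using that $LG$ has no amenable direct summand. Combined with Theorem \ref{thm.actual-conjugacy}, where the intertwining hypothesis is ruled out via Proposition \ref{prop.comult}.\ref{comult-two} and Lemma \ref{lem.prel-weak-mixing}, this produces a unitary $v \in M^r \ovt M$ with $v^*\Delta(LG)v \subset M^r \ovt LG$.

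In the second step, I verify the four assumptions of Section \ref{sec.norm-amen} for $M_0 \subset M$. Assumption \ref{assum.group} is the hypothesis on $\Gamma$. Assumption \ref{assum.rel-com} follows from Lemma \ref{lem.comult-rel-commutant-bimod}.\ref{rel-comm-bimod-two}, using Proposition \ref{prop.left-right-wreath}.\ref{left-right-c} for the relative commutant and the torsion-freeness of $H/H_0$ for discreteness of the image of $\cG/\cG_0$ in $\Out(M_0)$, with the non-Gamma property of $M_0$ supplied by Proposition \ref{prop.left-right-wreath}.\ref{left-right-a}. Assumption \ref{assum.embed} follows from weak mixing of the left-right Bernoulli action (Lemma \ref{lem.prel-weak-mixing}), and Assumption \ref{assum.rel-amen} from nonamenability of $G$. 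Since $\Delta(L\cH_0)$ is abelian, hence trivially amenable relative to $M^r \ovt B$, and is normalized by $\Delta(M_0)$, Theorem \ref{thm.norm-amen} yields $\Delta(L\cH_0) \prec^f M^r \ovt B$. Lemma \ref{lem.embed-with-normalizer} and Lemma \ref{lem.full-embed}.\ref{full-two} then combine the two intertwinings into a single unitary conjugation sending $\Delta(M_0)$ into $M^r \ovt L\cG$ in a way compatible with the semidirect-product decomposition $\cG = \cH \rtimes G$. From this point, the group-theoretic reconstruction of \cite[Section 7]{IPV10}, adapted via a descent through the dual $\widehat{H/H_0}$-action, extracts the isomorphism $\delta : \Lambda \to p_{H'}^{-1}(H_0') \rtimes G$, the Bernoulli base change $\theta : \widehat{H'} \to \widehat{H}$, the character $\om$, and the unitary $w$; Lemmas \ref{lem.funny} and \ref{lem.descr-aut} rigidify the character/automorphism data, and $r = 1$ is obtained by tracking traces through the amplification.

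The main obstacle I anticipate is the last step when $H_0 \subsetneq H$: the reconstruction of \cite{IPV10} is designed for the full generalized wreath product, and one must descend it to the proper subgroup $\cG_0 \subset \cG$ by using the $\widehat{H/H_0}$-equivariance of $\pi$ to cut out $p_{H'}^{-1}(H_0')$ on the $\Lambda$-side, and then rigidifying $\theta$ via a genuine group isomorphism $\gamma : H'/H_0' \to H/H_0$ (this is where torsion-freeness of $H/H_0$ is crucial). A secondary technical point is establishing the non-Gamma property and the discrete-image condition required by Lemma \ref{lem.comult-rel-commutant-bimod}; both reduce to Proposition \ref{prop.left-right-wreath} combined with the standing hypotheses on $\Gamma$.
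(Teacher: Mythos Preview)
There is a genuine gap in your first step. Theorem \ref{thm.spectral-gap-rigidity} does \emph{not} give uniform convergence of the deformation on $\cU(P)$; its conclusion is uniform convergence on $\cU(P' \cap p(N \ovt M)p)$, the \emph{relative commutant}. So taking $P := \Delta(LG)$ yields nothing about the generators $\Delta(u_g)$, $g \in G$, and you cannot feed this into Theorem \ref{thm.actual-conjugacy}. The paper's maneuver (Lemmas \ref{lem.tussenstap} and \ref{lem.conjugate-G}) is precisely to exploit the direct product structure $G = \Gamma \times \Gamma$: one applies Theorem \ref{thm.spectral-gap-rigidity} with $P := \Delta(L(\Gamma \times \{e\}))$, so that the unitaries $\Delta(u_g)$ for $g \in \{e\} \times \Gamma$ lie in $P'$ and inherit uniform convergence; by symmetry one gets the other factor, and then Theorem \ref{thm.actual-conjugacy} applied to the $(G \times G)$-Bernoulli action on $\Gamma \sqcup \Gamma$ produces a unitary conjugating $\Delta(LG)$ into $(LG \ovt LG)^r$, not merely into $M^r \ovt LG$. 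In case (2) your stabilizer analysis is also not quite right: $\Stab\{k_1,k_2\}$ is conjugate into $\delta(\Gamma_j)$, not into $\Gamma_j \times \{e\}$, and ruling out relative amenability to $M^r \ovt (A \rtimes \delta(\Gamma_j))$ requires an extra step via \cite[Corollary 2.12]{Io12b}.

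Your second step is too optimistic about what Lemmas \ref{lem.embed-with-normalizer} and \ref{lem.full-embed} buy. Getting $\Delta(A_0) \prec^f M^r \ovt A$ from Theorem \ref{thm.norm-amen} is fine, but one needs the symmetric statement $\Delta(A_0) \prec^f A \ovt A$, and more importantly one needs the same for the full relative commutant $C := \Delta(A_0)' \cap (M \ovt M)^r$ (Lemma \ref{lem.embed-rel-comm}) together with the weak-mixing property of $(\Ad \Delta(u_g))_{g \in G}$ on $C$ (Lemma \ref{lem.weak-mixing}); these are what feed into \cite[Theorem 6.1 and Corollary 6.2]{IPV10} to obtain $r=1$ and the simultaneous unitary conjugacy of Lemma \ref{lem.strong-intertwine}. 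Finally, the reconstruction of $\Sigma$ as $p_{H'}^{-1}(H'_0)$ is \emph{not} in \cite[Section 7]{IPV10}: it requires the new combinatorial Lemma \ref{lem.abstract-combinatorial}, whose hypotheses are verified via a delicate analysis of how $\Delta$ interacts with the coordinate embeddings $\pi_i, \pi_{ij}$ and with Popa's cocycle superrigidity for the $1$-cocycles $g \mapsto \pi_e(u_x)\pi_{g\cdot e}(u_x^*)$.
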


This whole section is devoted to the proof of Theorem \ref{thm.main}, following closely the strategy of \cite{IPV10} and using many results of \cite{IPV10}. At the end, we will deduce Theorem \ref{thm.main-intro}, with case \ref{thm.main-intro}.1 corresponding to the special case where $H_0 = H$, and case \ref{thm.main-intro}.2 corresponding to $H_0 = \{e\}$.

Throughout this section, we fix a countable icc group $\Gamma$ that satisfies either condition 1 or condition 2 in Theorem \ref{thm.main}. We also fix a
nontrivial abelian group $H$ with subgroup $H_0 < H$ such that $H/H_0$ is either trivial or torsion-free. We denote $\cH := H^{(\Gamma)}$ and $\cH_0 := p_H^{-1}(H_0)$.
We write $G := \Gamma \times \Gamma$  and we consider the left-right wreath product $\cG := \cH \rtimes G$, with its subgroup $\cG_0 := \cH_0 \rtimes G$.
Put $M := L \cG$ and $M_0 := L \cG_0$.

We finally fix a countable group $\Lambda$, a positive number $r > 0$ and a $*$-isomorphism $\pi : L \Lambda \recht M_0^r$. To simplify notations, we do not explicitly write $\pi$ and identify $M_0^r = L \Lambda$.

As in Section \ref{sec.comult}, we consider the amplified comultiplication $\Delta : M_0 \recht (M_0 \ovt M_0)^r$. Note that the amplified homomorphism $\Delta$ is only defined up to unitary conjugacy (see Remark \ref{rem.amplify-explicit} for details).

Both condition 1 and condition 2 in Theorem \ref{thm.main} guarantee that $\Gamma$ is not inner amenable. So by Proposition \ref{prop.left-right-wreath}.\ref{left-right-c}, $\cG_0$ and $\cG$ are icc groups, $M_0$ and $M$ are II$_1$ factors and $M_0' \cap M^\omega = \C 1$. Also $M_0$ does not have property Gamma and $\Out(M_0)$ is a Polish group.

We write $A := L \cH$ so that $M = A \rtimes G$. We also write $A_0 := L \cH_0$ so that $M_0 = A_0 \rtimes G$.

Recall that a countable subgroup $\cA$ of a Polish group $\cB$ is said to be \emph{discrete} if there exists a neighborhood $\cU$ of the identity $e$ in $\cB$ such that $\cU \cap \cA = \{e\}$.

We start by two general lemmas on the structure of $M_0$ and $M$. The first one is an immediate consequence of Popa's cocycle superrigidity theorem \cite[Theorem 1.1]{Po06b}.

\begin{lemma}\label{lem.an-aut-lem}
Let $\beta \in \Aut(M_0)$ and assume that there exists a nonzero vector $\xi \in L^2(M_0)$ such that $\xi_0 \beta(a) = a \xi_0$ for all $a \in A_0$. Then there exists a character $\om : G \recht \T$ and a unitary $v \in \cN_M(M_0)$ such that $\beta = (\Ad v) \circ \al_\om$, where the automorphism $\al_\om$ is defined as $\al_\om(a u_g) = \om(g) \, a u_g$ for all $a \in A_0$, $g \in G$.

If moreover $\beta$ generates a discrete infinite subgroup of $\Out(M_0)$, we have that $E_{M_0}(v) = 0$.
\end{lemma}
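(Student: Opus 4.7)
The plan is to use the intertwining vector $\xi$ to conjugate the MASA $\beta(A_0)$ onto $A_0$ inside the larger algebra $M$, and then to invoke Popa's cocycle superrigidity theorem \cite[Theorem 1.1]{Po06b} on the resulting automorphism of $M_0$ that stabilises $A_0$. Squaring the relation $\xi\beta(a) = a\xi$ and specialising to a unitary $a \in A_0$ yields $\beta(a)^*|\xi|^2\beta(a) = |\xi|^2$, so $|\xi|^2 \in L^1(M_0)$ commutes with $\beta(A_0)$. Because $\Gamma$ is icc, the generalised Bernoulli action $G \actson \widehat{\cH_0}$ is essentially free (for every non-trivial $g \in G$ the subgroup $\{g\cdot h - h : h \in \cH_0\} \subset \cH_0$ is infinite, forcing the set of $g$-fixed points in $\widehat{\cH_0}$ to be Haar-null), and hence both $A_0$ and $\beta(A_0)$ are MASAs in $M_0$, so $|\xi|^2$ is affiliated with $\beta(A_0)$. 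Writing the polar decomposition $\xi = w|\xi|$ with $w \in M_0$ and setting $v_0 := w \cdot s(|\xi|)$ produces a non-zero partial isometry in $M_0$ with $v_0^* v_0 \in \beta(A_0)$, $v_0 v_0^* \in A_0' \cap M_0 = A_0$ and $v_0 \beta(a) v_0^* = a\, v_0 v_0^*$ for all $a \in A_0$.

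Next, a standard maximality argument patches together $G$-translates of $v_0$ together with unitaries $u_h$ (for $h \in \cH$, all of which lie in $\cN_M(M_0)$) to extend $v_0$ to a unitary $V \in \cN_M(M_0)$ satisfying $V\beta(A_0)V^* = A_0$. The automorphism $\gamma := \Ad V \circ \beta \in \Aut(M_0)$ then stabilises $A_0$ globally, so one can write $\gamma(u_g) = c(g)\, u_{\theta(g)}$ for some $\theta \in \Aut(G)$ and a $1$-cocycle $c \colon G \to \cU(A_0)$ over the $\theta$-twisted action. Now Popa's cocycle superrigidity theorem \cite[Theorem 1.1]{Po06b} applies: since $G = \Gamma \times \Gamma$ is a product of two non-amenable groups and the action $G \actson A_0$ is s-malleable and weakly mixing, every such cocycle is cohomologous to a character $\om \colon G \to \T$, while the weak mixing simultaneously rules out non-trivial $\theta$. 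Absorbing the coboundary into $V$ gives $\gamma = \al_\om$, so $\beta = \Ad V^* \circ \al_\om$ with $v := V^* \in \cN_M(M_0)$, proving the main assertion.

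For the moreover statement, suppose that $\beta$ generates an infinite discrete subgroup of $\Out(M_0)$ but that $v_1 := E_{M_0}(v) \neq 0$. Applying the $M_0$-bimodular expectation $E_{M_0}$ to the identity $v\,\al_\om(a) = \beta(a)\,v$ (valid for all $a \in M_0$) yields $v_1 \al_\om(a) = \beta(a)\, v_1$. Taking adjoints to produce the companion identity $\al_\om(a) v_1^* = v_1^* \beta(a)$ and combining them shows that both $v_1 v_1^*$ and $v_1^* v_1$ lie in $M_0' \cap M_0 = \C 1$, so $v_1$ is a scalar multiple of a unitary implementing $\al_\om^{-1} \circ \beta$. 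Hence $[\beta] = [\al_\om]$ in $\Out(M_0)$, so $\langle [\beta] \rangle$ is contained in the image of the natural embedding $\widehat{G} \hookrightarrow \Out(M_0)$ (injective by weak mixing of $G \actson A_0$), which is a compact subgroup. Since discrete subgroups of compact groups are finite, this contradicts the infinite-discreteness of $\langle [\beta] \rangle$, forcing $E_{M_0}(v) = 0$.

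The main obstacle will be the extension step: promoting the partial isometry $v_0$, which only conjugates corners inside $M_0$, to a unitary $V \in \cN_M(M_0)$ that globally conjugates $\beta(A_0)$ onto $A_0$. This requires carefully matching up the corner projections of both MASAs while simultaneously ensuring that the resulting unitary normalises $M_0$ within the strictly larger algebra $M$; the interplay between the $G$-action on $A_0$ and the coset structure $\cH/\cH_0$ underlying $\cN_M(M_0)/M_0$ makes the bookkeeping subtle.
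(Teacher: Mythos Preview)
Your overall strategy is right and matches the paper's, but two steps are overcomplicated and, as written, leave genuine gaps.

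\textbf{The extension step.} You aim for a unitary $V \in \cN_M(M_0)$ with $V\beta(A_0)V^* = A_0$, and you flag this as ``the main obstacle''. In fact this step is much easier than you make it: since $G \actson A_0$ is ergodic and $v_0 v_0^* \in A_0$, a standard maximality argument using only the $G$-translates $u_g v_0 \beta(u_g)^*$ (which still satisfy the same intertwining relation $w\beta(a) = aw$) already produces a unitary $v_1 \in \cU(M_0)$ with $v_1 \beta(a) v_1^* = a$ for \emph{all} $a \in A_0$. No elements $u_h$ with $h \in \cH \setminus \cH_0$ are needed, and the conjugation is pointwise the identity on $A_0$, not merely global. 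This is exactly what the paper does, and it eliminates the ``main obstacle'' entirely.

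\textbf{The $\theta$-twist.} Because you only claim global stabilisation of $A_0$, you are forced to introduce $\theta \in \Aut(G)$ and then assert that ``weak mixing simultaneously rules out non-trivial $\theta$''. This assertion is unjustified as stated: weak mixing of $G \actson A_0$ does not by itself prevent the existence of automorphisms of $A_0$ conjugating $\sigma$ to $\sigma \circ \theta$ for non-trivial $\theta$. With the correct extension step above, $\beta_1 := (\Ad v_1)\circ\beta$ is the identity on $A_0$, so $\beta_1(u_g) = \mu_g u_g$ with $\mu_g \in \cU(A_0)$ and there is no twist at all.

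\textbf{Where cocycle superrigidity is applied.} You apply Popa's theorem to $G \actson A_0$, but this action (the quotient $K^\Gamma/K_0$) is not obviously s-malleable. The paper instead views the cocycle $(\mu_g)$, which takes values in $A_0 \subset A$, as a cocycle for the genuine generalized Bernoulli action $G \actson A$, and applies \cite[Theorem 1.1]{Po06b} there. The resulting coboundary $v_2$ then lies in $\cU(A)$, not necessarily in $\cU(A_0)$; this is precisely why the final unitary $v = v_1^* v_2^*$ lands in $\cN_M(M_0)$ rather than in $M_0$. So the passage to the larger algebra $M$ happens \emph{after} the cocycle step, not during the extension step.

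Your ``moreover'' argument is correct and essentially equivalent to the paper's.
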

\begin{proof}
Taking the polar decomposition of $\xi_0$, we find a nonzero partial isometry $v_0 \in M_0$ such that $v_0 \beta(a) = a v_0$ for all $a \in A_0$. By Proposition \ref{prop.left-right-wreath}.\ref{left-right-c}, we have that $A_0' \cap M_0 = A_0$ and hence, $v_0 v_0^* \in A_0$ and $v_0^* v_0 \in \beta(A_0)$. Since $G \actson A_0$ is ergodic, we can extend $v_0$ to a unitary $v_1 \in \cU(M_0)$ such that $v_1 \beta(a) v_1^* = a$ for all $a \in A_0$. Put $\beta_1 = (\Ad v_1) \circ \beta$. Since $\beta_1(a) = a$ for all $a \in A_0$, we have $\beta_1(u_g) = \mu_g \, u_g$ for all $g \in G$, where $\mu_g \in \cU(A_0)$ and $(\mu_g)_{g \in G}$ defines a $1$-cocycle for the action $G \actson A_0$.

By Popa's cocycle superrigidity theorem \cite[Theorem 1.1]{Po06b} for the action $G \overset{\si}{\actson} A$, we find a unitary $v_2 \in \cU(A)$ and a character $\om : G \recht \T$ such that $\mu_g = \om(g) \, v_2^* \si_g(v_2)$ for all $g \in G$. It follows that $v_2 \beta_1(x) v_2^* = \al_\om(x)$ for all $x \in M_0$. In particular, $v_2 \in \cN_M(M_0)$. Putting $v := v_1^* v_2^*$, we have that $v \in \cN_M(M_0)$ and $\beta = (\Ad v) \circ \al_\om$.

Finally, if $\beta$ generates a discrete infinite subgroup of $\Out(M_0)$, we know that as an element of $\Out(M_0)$, $\beta$ does not belong to the compact subgroup $\widehat{G} \subset \Out(M_0)$. So, $v \not\in \cU(M_0)$. Since $v \in \cN_M(M_0)$ and $M_0' \cap M = \C 1$, it follows that $E_{M_0}(v) = 0$.
\end{proof}

\begin{lemma}\label{lem.discrete-subgroup-Out}
Denote by $H_e$ the copy of $H$ inside $\cH$ in position $e \in \Gamma$. Then $M$ is generated by $M_0$ and the group of unitaries $\cL := \{u_s \mid s \in H_e\}$ that normalize $M_0$.
The image of $\cL$ in $\Out(M_0)$ is a discrete subgroup of $\Out(M_0)$ that is isomorphic with $H/H_0$.
\end{lemma}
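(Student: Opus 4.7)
The plan is to verify three things in sequence: that $\cL$ normalizes $M_0$ and together with $M_0$ generates $M$; that the induced homomorphism $H \cong H_e \to \Out(M_0)$ has kernel exactly $H_0$; and that its image is discrete in $\Out(M_0)$.

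For the first part, the key observation is that $p_H$ is invariant under the $G$-action on $\cH$, since that action merely permutes coordinates. For $s \in H_e$ and $g = (x,k) \in \cG$ a short calculation in the semidirect product gives $sgs^{-1} = (s + x - \sigma_k(s),k)$, so that $p_H(sgs^{-1}) = p_H(x) = p_H(g)$; in particular $H_e$ normalizes $\cG_0$, and hence $\cL \subset \cN_M(M_0)$. For generation, given $(y,k) \in \cG$, pick $s \in H_e$ supported at $e$ with value $p_H(y) \in H$; then $y - s \in \cH_0$ and $(y,k) = (y-s,k)(s,e)$, so $u_{(y,k)} \in M_0 \cdot \cL$, which shows $M = (M_0 \cup \cL)''$.

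For the kernel, $s \in H_0$ clearly implies $u_s \in M_0$, so $[\Ad u_s] = [\id]$. Conversely, if $\Ad u_s = \Ad w$ on $M_0$ for some $w \in \cU(M_0)$, then $w^* u_s \in M_0' \cap M = \C 1$ by Proposition \ref{prop.left-right-wreath}.\ref{left-right-c}, so $u_s \in \C w \subset M_0$, forcing $s \in H_e \cap \cG_0 = H_0$. The image of $\cL$ in $\Out(M_0)$ is therefore isomorphic to $H/H_0$.

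The delicate point is discreteness, and this is where the main obstacle lies. Suppose for contradiction that the image is not discrete. Then one obtains a sequence $s_n \in H_e \setminus H_0$ together with unitaries $w_n \in \cU(M_0)$ such that $\Ad(w_n u_{s_n}) \to \id$ pointwise in $\|\cdot\|_2$ on $M_0$. Set $v_n := w_n u_{s_n} \in \cU(M)$; these unitaries asymptotically commute with $M_0$, so along any free ultrafilter $\omega$ the element $(v_n)_\omega$ lies in $M_0' \cap M^\omega$. By the standing hypothesis $M_0' \cap M^\omega = \C 1$ recorded at the start of this section, $(v_n)_\omega = \lambda \cdot 1$ for some $\lambda \in \T$; equivalently, $\|w_n - \lambda u_{s_n}^*\|_2 \to 0$ along $\omega$. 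Applying $E_{M_0}$ and using that $s_n \notin \cG_0$, so $E_{M_0}(u_{s_n}^*) = 0$, forces $\|w_n\|_2 \to 0$ along $\omega$, contradicting $\|w_n\|_2 = 1$. The crucial input is precisely the strong form of non-inner-amenability expressed by $M_0' \cap M^\omega = \C 1$; without it, the central sequence $v_n$ could survive and break the argument.
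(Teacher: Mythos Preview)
Your proof is correct and follows essentially the same approach as the paper: both use the standing fact $M_0' \cap M^\omega = \C 1$ to show that any sequence $w_n u_{s_n}$ asymptotically commuting with $M_0$ must be asymptotically scalar, and then apply $E_{M_0}$ together with $E_{M_0}(u_{s_n}) = 0$ for $s_n \notin (H_0)_e$ to derive a contradiction. You are more explicit about the normalization, generation, and kernel computation, which the paper leaves implicit, but the substance of the discreteness argument is the same.
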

\begin{proof}
By Proposition \ref{prop.left-right-wreath}.\ref{left-right-a}, we know that $M_0' \cap M^\om = \C 1$. So whenever $(a_n)$ is a sequence of unitaries in $\cU(M)$ satisfying $\|x a_n - a_n x\|_2 \recht 0$ for all $x \in M_0$, there exists a sequence $\lambda_n \in \T$ such that $\|a_n - \lambda_n 1 \|_2 \recht 0$. Assume that we have a sequence $s_n \in H_e$ such that $\Ad(u_{s_n})$, viewed as a sequence in $\Out(M_0)$, converges to the identity. We must prove that $s_n$ belongs to $(H_0)_e$ eventually. Since $\Ad(u_{s_n})$ converges to the identity in $\Out(M_0)$, we find a sequence of unitaries $w_n \in \cU(M_0)$ such that $\Ad(w_n u_{s_n}) \recht \id$ in $\Aut(M_0)$. This means that $\|x w_n u_{s_n} - w_n u_{s_n} x\|_2 \recht 0$ for all $x \in M_0$. It follows that we can take a sequence $\lambda_n \in \T$ such that $\|w_n u_{s_n} - \lambda_n 1 \|_2 \recht 0$. So $\|u_{s_n} - \lambda_n w_n^*\|_2 \recht 0$. In particular, we get that $\|u_{s_n} - E_{M_0}(u_{s_n})\|_2 \recht 0$. Since $\|u_{s_n} - E_{M_0}(u_{s_n})\|_2 = 1$ whenever $s_n \not\in (H_0)_e$, we conclude that $s_n \in (H_0)_e$ eventually.
\end{proof}

We now start a systematic study of the amplified comultiplication $\Delta : M_0 \recht (M_0 \ovt M_0)^r$.

\begin{lemma}\label{lem.triv-rel-comm}
We have that $\Delta(M_0)' \cap (M \ovt M)^r = \C 1$.
\end{lemma}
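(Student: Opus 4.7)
The plan is to deduce this directly from Lemma \ref{lem.comult-rel-commutant-bimod}\ref{rel-comm-bimod-two}, whose conclusion is exactly the desired equality. All the real work therefore consists of checking the hypotheses of that lemma in our present setup $M_0 = L\cG_0 \subset L\cG = M$.

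First, I would record that $M_0$ is a II$_1$ factor without property Gamma. Indeed, by Proposition \ref{prop.left-right-wreath}\ref{left-right-a}, the group $\cG_0$ (containing $\Gamma \times \Gamma$ with $\Gamma$ nonamenable and non-inner-amenable under either condition 1 or 2) is not inner amenable, and by Proposition \ref{prop.left-right-wreath}\ref{left-right-c} it is icc. By the discussion in Section \ref{sec.inner-amen}, non-inner-amenability implies that $M_0$ has no property Gamma (in fact we get the stronger $M_0' \cap M_0^\omega = \C 1$). The identification $M_0^r = L\Lambda$ is assumed, and Proposition \ref{prop.left-right-wreath}\ref{left-right-c} gives $M_0' \cap M = (L\cG_0)' \cap L\cG = \C 1$.

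Next I would choose the subgroup $\cL \subset \cN_M(M_0)$ required by the lemma. Lemma \ref{lem.discrete-subgroup-Out} provides exactly such a $\cL$: taking $\cL := \{u_s \mid s \in H_e\}$, where $H_e \subset \cH$ is the copy of $H$ in position $e$, we have $M = (M_0 \cup \cL)\dpr$, the image of $\cL$ in $\Out(M_0)$ is discrete, and this image is isomorphic to $H/H_0$. The torsion-freeness hypothesis on $\cL$'s image in $\Out(M_0)$ is then guaranteed by our standing assumption that $H/H_0$ is either trivial or torsion-free.

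With all four hypotheses of Lemma \ref{lem.comult-rel-commutant-bimod} verified, part \ref{rel-comm-bimod-two} of that lemma yields $\Delta(M_0)' \cap (M \ovt M)^r = \C 1$, which is the desired conclusion. There is no genuine obstacle here: the lemma was formulated precisely to be applied in this situation, and the only mild point to keep track of is matching ``torsion-free discrete image in $\Out(M_0)$'' with the fact that $H/H_0$ is torsion-free by hypothesis (and trivial in the special case $H_0 = H$, for which the conclusion is also immediate since then $\cL \subset \cU(M_0)$ and $M = M_0$).
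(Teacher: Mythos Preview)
Your proposal is correct and follows exactly the same approach as the paper's proof: invoke Lemma \ref{lem.discrete-subgroup-Out} to obtain the subgroup $\cL$ with discrete image $\cong H/H_0$ in $\Out(M_0)$, use the standing hypothesis that $H/H_0$ is torsion-free, and apply Lemma \ref{lem.comult-rel-commutant-bimod}\ref{rel-comm-bimod-two}. The only difference is that you spell out the verification of the hypotheses (factoriality, absence of property Gamma, $M_0' \cap M = \C 1$) that the paper has already recorded in the paragraph preceding Lemma \ref{lem.an-aut-lem}.
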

\begin{proof}
This is an immediate consequence of Lemma \ref{lem.discrete-subgroup-Out}, the assumption that $H/H_0$ is torsion-free and part \ref{rel-comm-bimod-two} of Lemma \ref{lem.comult-rel-commutant-bimod}.
\end{proof}

In what follows, we apply twice Theorem \ref{thm.norm-amen}. So we need to check that the assumptions stated in the beginning of Section \ref{sec.norm-amen} are satisfied.

\begin{lemma}\label{lem.assum-ok}
Both when viewing $M$ as the crossed product $M = B \rtimes (\{e\} \times \Gamma)$ with $B = A \rtimes (\Gamma \times \{e\})$, or as the crossed product $M = B \rtimes (\Gamma \times \{e\})$ with $B = A \rtimes (\{e\} \times \Gamma)$, all assumptions in the beginning of Section \ref{sec.norm-amen} are satisfied. More concretely, we have
\begin{enumerate}[label=(\alph*)]
\item\label{aaaa} $M_0' \cap M^\om = \C 1$,
\item\label{bbbb} $\Delta(M_0)' \cap (M \ovt M)^r = \C 1$,
\item\label{cccc} if $\Gamma_0 < \Gamma$ is a subgroup of infinite index,
$$M_0 \not\prec A \rtimes (\Gamma \times \Gamma_0) \quad\text{and}\quad M_0 \not\prec A \rtimes (\Gamma_0 \times \Gamma) \; ,$$
\item\label{dddd} $M_0$ is nonamenable relative to $A \rtimes (\Gamma \times \{e\})$, and nonamenable relative to $A \rtimes (\{e\} \times \Gamma)$, inside $M$.
\end{enumerate}
\end{lemma}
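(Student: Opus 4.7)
The plan is to treat the four items separately; items (a) and (b) follow essentially by citation to earlier material, while (c) and (d) require short explicit arguments.

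Item (a) follows from the strong form of non-inner amenability recorded in Proposition \ref{prop.left-right-wreath}.\ref{left-right-a}: since the representation $(\Ad g)_{g \in G}$ on $\ell^2(\cG - \{e\})$ has no almost invariant unit vectors, Fourier analysis of a central sequence $(a_n) \in M_0' \cap M^\omega$ with $\tau(a_n) = 0$ (using commutation with the unitaries $u_g$, $g \in G$) forces the vector $k \mapsto \|a_n(k)\|_2$ to concentrate at $\{e\}$, so $a_n$ is asymptotically in $A$. Since $A$ is abelian and the Bernoulli action $G \actson A$ is strongly ergodic (as $\Gamma$, and hence $G$, is nonamenable), commutation with the $(u_g)_{g \in G}$ then drives $a_n$ to $0$ in $L^2$. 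Item (b) is literally Lemma \ref{lem.triv-rel-comm}, already established.

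For (c), I will exploit the inclusion $L G \subset M_0$: restricting an intertwiner shows that $M_0 \prec_M N$ implies $L G \prec_M N$, so it suffices to prove $L G \not\prec_M A \rtimes (\Gamma \times \Gamma_0)$, the case of $A \rtimes (\Gamma_0 \times \Gamma)$ being symmetric. By Theorem \ref{thm.intertwining} with the generating unitaries $\{u_g : g \in G\}$, this amounts to exhibiting a sequence $g_n \in G$ with $\|E_{A \rtimes (\Gamma \times \Gamma_0)}(x^* u_{g_n} y)\|_2 \to 0$ for all $x, y \in M$. Taking $g_n = (e, h_n)$ with $h_n \in \Gamma$ and computing the conditional expectation on Fourier monomials supported in $(h_1, h_2), (k_1, k_2) \in G$, one sees that the expectation is nonzero only when $h_n \in h_2 \Gamma_0 k_2^{-1}$; the problem therefore reduces to finding $(h_n) \subset \Gamma$ which eventually escapes every set of the form $F \Gamma_0 F$ with $F \subset \Gamma$ finite. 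Since $\Gamma_0$ has infinite index in $\Gamma$, each $a \Gamma_0 b$ is a left coset of a conjugate of $\Gamma_0$ and so has infinite complement, and a standard diagonal induction (enumerating $\Gamma = \{\gamma_1, \gamma_2, \ldots\}$ and successively picking $h_{n+1}$ outside both $\{h_1, \ldots, h_n\}$ and $\{\gamma_1, \ldots, \gamma_n\} \Gamma_0 \{\gamma_1, \ldots, \gamma_n\}$) yields the desired sequence.

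For (d) I will argue by contradiction. If $M_0$ were amenable relative to $A \rtimes (\Gamma \times \{e\})$ inside $M$, then the smaller subalgebra $L G \subset M_0$ would inherit the same relative amenability, and Lemma \ref{lem.rel-amen-subgroups} would rephrase this as $G = \Gamma \times \Gamma$ being amenable relative to $\Gamma \times \{e\}$, i.e.\ as the translation action of $G$ on $G/(\Gamma \times \{e\}) \cong \Gamma$ admitting an invariant mean. But this action factors through the projection onto the second coordinate and is nothing but left multiplication of $\Gamma$ on itself, so an invariant mean would force $\Gamma$ to be amenable, contradicting the hypothesis. The case of $A \rtimes (\{e\} \times \Gamma)$ is symmetric. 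The only step I expect to require genuine bookkeeping is the computation in (c) that turns ``escape every small-relative-to-$\Lambda$ subset'' into the concrete group-theoretic condition on $(h_n)$ in the left-right product setting.
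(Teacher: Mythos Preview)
Your proposal is correct and tracks the paper's own (very terse) proof: (a) via Proposition~\ref{prop.left-right-wreath}.\ref{left-right-a}, (b) as Lemma~\ref{lem.triv-rel-comm}, (c) declared ``straightforward'', and (d) via Lemma~\ref{lem.rel-amen-subgroups}. Your expanded arguments for (c) and (d) are exactly what the paper has in mind; in (c) the one point you should make explicit is that the finite union $\{\gamma_1,\ldots,\gamma_n\}\Gamma_0\{\gamma_1,\ldots,\gamma_n\}$ really has infinite complement, which follows from B.~H.~Neumann's lemma since each $a\Gamma_0 b$ is a left coset of the infinite-index conjugate $b^{-1}\Gamma_0 b$.

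One simplification for (a): rather than first reducing to $A$ and then invoking strong ergodicity of the Bernoulli action, you can expand $a_n \in M = L\cG$ directly as $a_n = \sum_{k \in \cG} c_n(k)\,u_k$ with scalar Fourier coefficients. Commutation with $u_g$, $g \in G$, makes $\xi_n := (c_n(k))_{k \in \cG-\{e\}} \in \ell^2(\cG-\{e\})$ almost invariant under $(\Ad g)_{g \in G}$, and Proposition~\ref{prop.left-right-wreath}.\ref{left-right-a} then forces $\|\xi_n\|_2 \to 0$ in one step. Your two-stage argument is correct but does more work than necessary.
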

\begin{proof}
We already observed above that \ref{aaaa} follows from Proposition \ref{prop.left-right-wreath}.\ref{left-right-a}. Statement \ref{bbbb} is given by Lemma \ref{lem.triv-rel-comm}.
Statements \ref{cccc} is straightforward and statement \ref{dddd} follows from Lemma \ref{lem.rel-amen-subgroups}.
\end{proof}

\begin{lemma}\label{lem.embed-A}
We have $\Delta(A_0) \prec^f A \ovt A$.
\end{lemma}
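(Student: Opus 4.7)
The plan is to apply Theorem~\ref{thm.norm-amen} to $P = \Delta(A_0)$. The two hypotheses to verify are easy: since $A_0$ is abelian, $\Delta(A_0)$ is abelian and hence amenable relative to any subalgebra; and since $A_0$ is the normal subalgebra of $M_0 = A_0 \rtimes G$, the algebra $\Delta(A_0)$ is normalized by $\Delta(M_0)$. By Lemma~\ref{lem.assum-ok} the framework of Section~\ref{sec.norm-amen} applies both when $M$ is viewed as $B \rtimes (\{e\} \times \Gamma)$ with $B = A \rtimes (\Gamma \times \{e\})$, and when $M$ is viewed as $B \rtimes (\Gamma \times \{e\})$ with $B = A \rtimes (\{e\} \times \Gamma)$. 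Two applications of Theorem~\ref{thm.norm-amen} then give
$$\Delta(A_0) \prec^f M^r \ovt (A \rtimes (\Gamma \times \{e\})) \quad\text{and}\quad \Delta(A_0) \prec^f M^r \ovt (A \rtimes (\{e\} \times \Gamma)).$$

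Writing $M^r \ovt M = (M^r \ovt A) \rtimes (\Gamma \times \Gamma)$, the subgroups $\Gamma \times \{e\}$ and $\{e\} \times \Gamma$ are both normal in $\Gamma \times \Gamma$ and have trivial intersection, so Lemma~\ref{lem.full-embed}.\ref{full-two} yields $\Delta(A_0) \prec^f M^r \ovt A$. Invoking the intrinsic flip symmetry $\Sigma \circ \Delta = \Delta$ of the comultiplication at the amplified level, the symmetric argument (exchanging the two tensor factors throughout) produces $\Delta(A_0) \prec^f A \ovt M^r$. A final application of Lemma~\ref{lem.full-embed}.\ref{full-two}, now inside $M \ovt M = (A \ovt A) \rtimes (G \times G)$ with the normal subgroups $G \times \{e\}$ and $\{e\} \times G$ having trivial intersection, delivers $\Delta(A_0) \prec^f A \ovt A$.

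The only delicate point is the symmetry step: one must confirm that the flip on $L \Lambda \ovt L \Lambda$ fixing $\Delta(L \Lambda)$ descends, after amplification, to an automorphism of $(M \ovt M)^r$ that fixes $\Delta(M_0)$ and swaps the natural copies $M^r \ovt M$ and $M \ovt M^r$ up to unitary conjugacy. Once this is granted, everything else is a routine combination of Theorem~\ref{thm.norm-amen} with Lemma~\ref{lem.full-embed}.\ref{full-two}, exploiting the direct-product structure $G = \Gamma \times \Gamma$ at both occurrences.
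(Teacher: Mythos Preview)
Your proof is correct and follows essentially the same route as the paper: apply Theorem~\ref{thm.norm-amen} (via Lemma~\ref{lem.assum-ok}) to the abelian subalgebra $\Delta(A_0)$ for both crossed product decompositions of $M$, combine the two resulting full intertwinings with Lemma~\ref{lem.full-embed}\ref{full-two} to land in $M^r \ovt A$, invoke the flip symmetry of the comultiplication to get the mirror statement, and apply Lemma~\ref{lem.full-embed}\ref{full-two} once more. The paper's own proof also glosses over the symmetry step with a bare ``By symmetry'', so your added remark about the amplified flip is, if anything, more scrupulous than the original.
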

\begin{proof}
Because of Lemma \ref{lem.assum-ok}, we can apply Theorem \ref{thm.norm-amen} to the crossed product decompositions $M = (A \rtimes (\Gamma \times \{e\})) \rtimes \Gamma$ and $M = (A \rtimes (\{e\} \times \Gamma)) \rtimes \Gamma$, and the abelian (hence amenable) von Neumann subalgebra $\Delta(A_0) \subset M^r \ovt M$. We conclude that
$$\Delta(A_0) \prec^f M^r \ovt \bigl(A \rtimes (\Gamma \times \{e\})\bigr) \quad{and}\quad \Delta(A_0) \prec^f M^r \ovt \bigl(A \rtimes (\{e\} \times \Gamma)\bigr) \; .$$
So by Lemma \ref{lem.full-embed}.\ref{full-two}, we get that $\Delta(A_0) \prec^f M^r \ovt A$. By symmetry, we also have that $\Delta(A_0) \prec^f A \ovt M^r$. Again by Lemma \ref{lem.full-embed}.\ref{full-two}, we conclude that $\Delta(A_0) \prec^f A \ovt A$.
\end{proof}

\begin{lemma}\label{lem.no-embed-Delta-LG}
Let $G_1 < G$ be a subgroup of infinite index. Then $\Delta(L G) \not\prec M^r \ovt (A \rtimes G_1)$ and $\Delta(L G) \not\prec (A \rtimes G_1) \ovt M^r$.
\end{lemma}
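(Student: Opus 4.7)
The strategy is to assume $\Delta(LG) \prec M^r \ovt (A \rtimes G_1)$ and derive a contradiction by first lifting this embedding from $\Delta(LG)$ up to $\Delta(M_0)$ using the normalizer calculus (Lemma~\ref{lem.embed-with-normalizer}), and then transporting it back through the comultiplication via Proposition~\ref{prop.comult}\ref{comult-two}.

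Concretely, view $M^r \ovt M = (M^r \ovt A) \rtimes G$ as a crossed product with $G$ acting only on the right tensor factor. The unitaries $\Delta(u_g)$, $g \in G$, generate $\Delta(LG)$ and normalize $\Delta(A_0)$. Lemma~\ref{lem.embed-A} gives $\Delta(A_0) \prec^f A \ovt A$, and a fortiori $\Delta(A_0) \prec^f M^r \ovt A$. Applying Lemma~\ref{lem.embed-with-normalizer} with $Q = \Delta(A_0)$ and group of unitaries $\{\Delta(u_g) : g \in G\}$, I conclude
\[ \Delta(M_0) = (\Delta(A_0) \cup \Delta(LG))\dpr \prec M^r \ovt (A \rtimes G_1) . \]
The contrapositive of Proposition~\ref{prop.comult}\ref{comult-two}, applied with $P = A \rtimes G_1$, then yields $M_0 \prec_M A \rtimes G_1$, and hence $LG \prec_M A \rtimes G_1$ since $LG \subset M_0$.

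To reach a contradiction I claim that $LG \not\prec_M A \rtimes G_1$ whenever $G_1 < G$ has infinite index. By Theorem~\ref{thm.intertwining} it suffices, for every finite $F \subset G$, to exhibit $g \in G$ with $hgk \notin G_1$ for all $h,k \in F$, i.e.\ $g \notin \bigcup_{h,k \in F} h^{-1} G_1 k^{-1}$. A direct rewriting gives $h^{-1} G_1 k^{-1} = (h^{-1} k^{-1}) \cdot (k G_1 k^{-1})$, a left coset of the conjugate $k G_1 k^{-1}$, which still has infinite index in $G$. By B.\,H.~Neumann's coset-covering lemma, a finite union of cosets of subgroups of infinite index cannot cover $G$, so the required $g$ exists.

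The second assertion $\Delta(LG) \not\prec (A \rtimes G_1) \ovt M^r$ follows by the completely symmetric argument after interchanging the two tensor factors (Lemma~\ref{lem.embed-with-normalizer} applied to $(A \ovt M^r) \rtimes G$, and the left-right symmetric version of Proposition~\ref{prop.comult}\ref{comult-two}). The main subtlety is the initial lifting step from $\Delta(LG)$ to $\Delta(M_0)$: Proposition~\ref{prop.comult}\ref{comult-two} does not apply directly to $\Delta(LG)$, so the argument essentially relies on combining the normalizer structure of $\Delta(u_g)$ over $\Delta(A_0)$ with the nontrivial $\prec^f$-embedding of $\Delta(A_0)$ supplied by Lemma~\ref{lem.embed-A}.
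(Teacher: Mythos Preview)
Your proof is correct and follows exactly the same route as the paper: assume $\Delta(LG) \prec M^r \ovt (A \rtimes G_1)$, combine Lemma~\ref{lem.embed-A} with Lemma~\ref{lem.embed-with-normalizer} to upgrade this to $\Delta(M_0) \prec M^r \ovt (A \rtimes G_1)$, and then invoke Proposition~\ref{prop.comult}\ref{comult-two} to obtain the contradiction $M_0 \prec_M A \rtimes G_1$. The only difference is cosmetic: the paper stops at ``contradicting the assumption that $G_1 < G$ has infinite index,'' whereas you unpack this by passing to $LG \subset M_0$ and invoking Neumann's coset-covering lemma, which is a perfectly good way to make that step explicit.
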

\begin{proof}
By symmetry, it suffices to prove that $\Delta(L G) \not\prec M^r \ovt (A \rtimes G_1)$. Assume the contrary. A combination of Lemma \ref{lem.embed-A} and Lemma \ref{lem.embed-with-normalizer} then gives that $\Delta(M_0) \prec M^r \ovt (A \rtimes G_1)$. Proposition \ref{prop.comult}.\ref{comult-two} now implies that $M_0 \prec A \rtimes G_1$, contradicting the assumption that $G_1 < G$ has infinite index.
\end{proof}

We can view $M$ as the generalized Bernoulli crossed product $M = (L H)^\Gamma \rtimes G$. As in Section~\ref{sec.spectral-gap}, we have the tensor length deformation by automorphisms $\al_t$ of the tracial von Neumann algebra $\Mtil := (LH * L\Z)^\Gamma \rtimes G$.

\begin{lemma}\label{lem.tussenstap}
Let $P \subset (M \ovt M)^r$ be a von Neumann subalgebra such that for all nonzero projections $p \in P' \cap (M \ovt M)^r$, we have that $P p$ is nonamenable relative to $M^r \ot 1$. Assume that $\Delta(L G) \subset \cN_{(M \ovt M)^r}(P)\dpr$. Then
$$\sup_{b \in \cU(P' \cap (M \ovt M)^r)} \|(\id \ot \al_t)(b) - b\|_2 \recht 0 \quad\text{as $t \recht 0$.}$$
\end{lemma}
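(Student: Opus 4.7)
The plan is to apply Theorem \ref{thm.spectral-gap-rigidity} directly to $P$. We view $(M \ovt M)^r = N \ovt M$ with $N := M^r$, and use the generalized Bernoulli structure $M = (LH)^\Gamma \rtimes G$ with $G = \Gamma \times \Gamma$ acting on $\Gamma$ by left-right multiplication $(g,h) \cdot k = gkh^{-1}$. The associated tensor length deformation on the second tensor factor is precisely $\al_t$, so the theorem's conclusion gives exactly the desired uniform convergence on $\cU(P' \cap (M \ovt M)^r)$. It remains to verify the two hypotheses: a stabilizer amenability condition and a nonamenability condition on $P$.

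For the stabilizer hypothesis, a direct computation shows that for $\cF \subset \Gamma$ with $|\cF| \geq 2$ and any fixed $k_0 \in \cF$, the stabilizer $\Stab \cF$ is isomorphic, via $g \mapsto (g, k_0^{-1} g k_0)$, to the centralizer $\operatorname{Centr}_\Gamma\bigl(\{k k_0^{-1} : k \in \cF \setminus \{k_0\}\}\bigr)$. In case 1 of Theorem \ref{thm.main} (hyperbolic with bounded torsion), centralizers of nonempty sets of nontrivial elements are virtually cyclic and hence amenable, so I take $\ell = 1$, $G_1 = \{e\}$, $\kappa = 2$. In case 2 ($\Gamma = \Gamma_1 * \Gamma_2$), I take $\ell = 3$ with $G_1 = \{e\}$, $G_2 = \Gamma_1 \times \Gamma_1$, $G_3 = \Gamma_2 \times \Gamma_2$, and use the fact that every centralizer in a free product is either cyclic or conjugate into a factor to check that $\Stab \cF$ is amenable relative to one of the chosen $G_i$.

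For the nonamenability hypothesis, I argue by contradiction. Suppose $Pq$ is amenable relative to $M^r \ovt (A \rtimes G_i)$ for some nonzero projection $q \in P' \cap (M \ovt M)^r$ and some $i$. By Lemma \ref{lem.rel-amen-max-proj}, the maximal such projection $q_1$ lies in the centre of $\cN_{(M \ovt M)^r}(P)\dpr$, and hence commutes with $\Delta(LG)$. Combining this with Lemma \ref{lem.triv-rel-comm} (which gives $\Delta(M_0)' \cap (M \ovt M)^r = \C 1$) and the $G$-ergodicity of the Bernoulli factor $A$, one forces $q_1 = 1$, so $P$ itself is amenable relative to $M^r \ovt (A \rtimes G_i)$. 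Running this argument for every chosen $G_i$ and intersecting via Lemma \ref{lem.rel-amen-intersect} yields amenability of $P$ relative to $M^r \ovt A$; a further step absorbing the abelian, weakly mixing Bernoulli factor $A$ (via the transport properties of the deformation or a direct bimodule computation) pushes this down to amenability of $P$ relative to $M^r \ot 1$, contradicting the hypothesis of the lemma.

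The main obstacle is precisely this last step of the nonamenability verification: bridging the gap between amenability relative to the larger algebras $M^r \ovt (A \rtimes G_i)$, which is what the normalizer argument produces, and amenability relative to the strictly smaller $M^r \ot 1$, which is the assumption we wish to contradict. This is where the Bernoulli structure of $A$ and the normalizer generated by $\Delta(LG)$ must be used in tandem. Once this is secured, Theorem \ref{thm.spectral-gap-rigidity} applies and delivers the stated uniform convergence.
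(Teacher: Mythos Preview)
Your overall plan---reduce to Theorem \ref{thm.spectral-gap-rigidity} by verifying a stabilizer condition and a nonamenability condition---is exactly the paper's, but both verifications have genuine gaps.

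\textbf{Case 1.} The choice $\kappa=2$ rests on the claim that in an icc hyperbolic group every nontrivial element has virtually cyclic centralizer. That is false for torsion elements: $\Gamma = (\Z/2\Z) * \bigl((\Z/2\Z)\times\F_2\bigr)$ is an icc hyperbolic (in fact virtually free) group, and the central involution $z$ of the second factor has centralizer $(\Z/2\Z)\times\F_2$, which is nonamenable. The paper instead takes $\kappa$ to be one more than the bound on the orders of finite subgroups of $\Gamma$; then $\kappa$ distinct elements of $\Gamma$ must generate an \emph{infinite} subgroup, and class~$\cS$ (not hyperbolicity per se) guarantees that centralizers of infinite subgroups are amenable. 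With this $\kappa$, $\ell=1$, $G_1=\{e\}$, Theorem \ref{thm.spectral-gap-rigidity} applies immediately since $A$ is amenable.

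\textbf{Case 2.} Here the obstacle you flag at the end is real, and your sketch does not close it. Two problems: first, the maximal projection $q_1$ lies in $\cZ(\cN(P)'')\subset\Delta(LG)'$, but Lemma \ref{lem.triv-rel-comm} only gives $\Delta(M_0)'\cap(M\ovt M)^r=\C 1$; you have no control over $[q_1,\Delta(A_0)]$, so you cannot conclude $q_1=1$. Second, even granting $q_1=1$, your contradiction hypothesis produces amenability relative to $M^r\ovt(A\rtimes G_i)$ for a \emph{single} $i$, so there is nothing to intersect via Lemma \ref{lem.rel-amen-intersect}. The paper's route is different and does not try to force $q_1=1$: it uses the subgroups $\delta(\Gamma_j)$ rather than $\Gamma_j\times\Gamma_j$, views $M^r\ovt M$ as the amalgamated free product of $\cM_j:=M^r\ovt(A\rtimes(\Gamma\times\Gamma_j))$ over $B:=M^r\ovt(A\rtimes(\Gamma\times\{e\}))$, and then applies \cite[Corollary 2.12]{Io12b} together with Lemma \ref{lem.no-embed-Delta-LG} (which gives $Qp\not\prec\cM_j$) to push amenability of $Pp$ relative to $\cM_1$ down to amenability relative to $B$. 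Running the same argument with the left--right factors swapped gives amenability relative to $M^r\ovt(A\rtimes(\{e\}\times\Gamma))$, and \emph{now} Lemma \ref{lem.rel-amen-intersect} yields amenability relative to $M^r\ovt A$, hence to $M^r\ot 1$, contradicting the lemma's hypothesis. The missing ingredient in your sketch is precisely this amalgamated-free-product transfer from \cite{Io12b}.
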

\begin{proof}
We concretely realize the amplification $(M \ovt M)^r$ as $M^r \ovt M$.
Since $A$ is abelian and hence amenable, we have that $P p$ is nonamenable relative to $M^r \ovt A$, for all nonzero projections $p \in P' \cap M^r \ovt M$.

{\bf Case 1:} $\Gamma$ is a nonamenable group in class $\cS$ with the property that all finite subgroups of $\Gamma$ have order at most $\kappa-1$, for some fixed $\kappa \in \N$. We consider the left-right action $G \actson \Gamma$. We claim that $\Stab \cF$ is amenable whenever $\cF \subset \Gamma$ satisfies $|\cF| \geq \kappa$. Indeed, every $\Stab \cF$ is isomorphic with a subgroup of $\Gamma$ defined as the centralizer of $\kappa$ distinct elements. These $\kappa$ distinct elements necessarily generate an infinite subgroup of $\Gamma$. Since $\Gamma$ belongs to class $\cS$, the centralizer of an infinite subgroup is amenable (see Section \ref{sec.class-S-wa}). This proves the claim. So the conclusion of the lemma follows immediately from Theorem \ref{thm.spectral-gap-rigidity}, even without using the assumption that $\Delta(L G) \subset \cN_{M^r \ovt M}(P)\dpr$.

{\bf Case 2:} $\Gamma = \Gamma_1 * \Gamma_2$ with $|\Gamma_1| \geq 2$ and $|\Gamma_2| \geq 3$. Denote by $\delta : \Gamma \recht \Gamma \times \Gamma : \delta(h) = (h,h)$ the diagonal embedding and consider the left-right action $G \actson \Gamma$. Whenever $\cF \subset \Gamma$ and $|\cF| \geq 2$, we have that $\Stab \cF$ is either cyclic, or conjugate to a subgroup of $\delta(\Gamma_1)$, or conjugate to a subgroup of $\delta(\Gamma_2)$. So the conclusion of the lemma follows from Theorem \ref{thm.spectral-gap-rigidity}, once we have proven that $P p$ is nonamenable relative to $M^r \ovt (A \rtimes \delta(\Gamma_j))$ for all $j \in \{1,2\}$ and all nonzero projections $p \in P' \cap M^r \ovt M$.

By symmetry, it suffices to consider $j = 1$. Take a nonzero projection $p \in P' \cap M^r \ovt M$ and assume by contradiction that $P p$ is amenable relative to $M^r \ovt (A \rtimes \delta(\Gamma_1))$. Denote by $Q$ the normalizer of $P$ inside $M^r \ovt M$. By assumption, we have that $\Delta(L G) \subset Q$. Replacing $p$ by the smallest projection in $\cZ(Q)$ that dominates $p$ and using Lemma \ref{lem.rel-amen-max-proj}, we still have that $P p$ is amenable relative to $M^r \ovt (A \rtimes \delta(\Gamma_1))$.

We now prove that $P p$ is amenable relative to $B := M^r \ovt (A \rtimes (\Gamma \times \{e\}))$.
We denote $\cM_j := M^r \ovt (A \rtimes (\Gamma \times \Gamma_j))$. We can then view $M^r \ovt M$ as the amalgamated free product of $\cM_1$ and $\cM_2$ over $B$. Since we assumed that $Pp$ is amenable relative to $M^r \ovt (A \rtimes \delta(\Gamma_1))$, we have a fortiori that $P p$ is amenable relative to $\cM_1$. Since $p \in \cZ(Q)$, the normalizer of $P p$ inside $M^r \ovt M$ contains $Q p$. Since $\Delta(L G) \subset Q$ and since $\Gamma \times \Gamma_1$ has infinite index in $G$, it follows from Lemma \ref{lem.no-embed-Delta-LG} that $Q p \not\prec \cM_1$. Then \cite[Corollary 2.12]{Io12b} implies that $P p$ is amenable relative to $B$.

By symmetry, we also get that $P p$ is amenable relative to $M^r \ovt (A \rtimes (\{e\} \times \Gamma))$. So Lemma \ref{lem.rel-amen-intersect} implies that $P p$ is amenable relative to $M^r \ovt A$, and hence also relative to $M^r \ot 1$, contradicting our initial assumptions on $P$.
\end{proof}

\begin{lemma}\label{lem.conjugate-G}
There exists a unitary $\Om \in (M \ovt M)^r$ such that
$$\Om \; \Delta(LG) \; \Om^* \subset (LG \ovt LG)^r \; .$$
\end{lemma}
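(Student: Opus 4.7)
The plan is to view $M \ovt M$ as a single generalized Bernoulli crossed product, namely $M \ovt M = (LH)^{\Gamma \sqcup \Gamma} \rtimes (G \times G)$, whose tensor length deformation coincides with $\al_t \ot \al_t$ on $\Mtil \ovt \Mtil$. With that perspective in mind, the strategy will be to verify the hypotheses of Theorem \ref{thm.actual-conjugacy} for $Q = \Delta(LG)$ and thereby conjugate $\Delta(LG)$ directly into $L(G \times G) = LG \ovt LG$, recovering the desired $\Om$.

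To feed Theorem \ref{thm.actual-conjugacy}, the main task is to prove uniform convergence of $\al_t \ot \al_t$ on a generating set of $\Delta(LG)$. I would first apply Lemma \ref{lem.tussenstap} with $P = \Delta(LG_1)$, where $G_1 = \Gamma \times \{e\}$: since $G_1$ is normal in $G$, $\Delta(LG)$ normalizes $\Delta(LG_1)$, and since $LG_1 \cong L\Gamma$ is a nonamenable factor, Proposition \ref{prop.comult}.\ref{comult-four} supplies the required relative nonamenability over $M^r \ot 1$. The lemma then yields uniform convergence of $\id \ot \al_t$ on $\cU(\Delta(LG_1)' \cap (M \ovt M)^r)$, which contains $\cU(\Delta(LG_2))$ since $G_2 := \{e\} \times \Gamma$ commutes with $G_1$. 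Swapping the roles of $G_1$ and $G_2$ gives uniform convergence on $\cU(\Delta(LG_1))$, and because every $\Delta(u_g)$ with $g = (g_1,g_2)$ factors as $\Delta(u_{g_1})\Delta(u_{g_2})$, the triangle inequality propagates uniform convergence to the generating set $\cG := \{\Delta(u_g) : g \in G\}$. A symmetric application of the same argument (using the mirror form of Proposition \ref{prop.comult}.\ref{comult-four} with $1 \ot M^r$ replacing $M^r \ot 1$) gives uniform convergence of $\al_t \ot \id$ on $\cG$. Writing $\al_t \ot \al_t = (\al_t \ot \id) \circ (\id \ot \al_t)$ and using once more that $\al_t$ is a $\|\cdot\|_2$-isometry on unitaries, the triangle inequality finally produces uniform convergence of the combined deformation $\al_t \ot \al_t$ on $\cG$.

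I would then apply Theorem \ref{thm.actual-conjugacy} to $Q = \Delta(LG) \subset (M \ovt M)^r$ in the generalized Bernoulli setup $M \ovt M = (LH)^{\Gamma \sqcup \Gamma} \rtimes (G \times G)$: the group $G \times G$ is icc (as $\Gamma$ is icc), the stabilizers of its action on $\Gamma \sqcup \Gamma$ are conjugate to $\delta(\Gamma) \times G$ or $G \times \delta(\Gamma)$, and $\delta(\Gamma) < G$ has infinite index. The corresponding stabilizer subalgebras are $(A \rtimes \delta(\Gamma)) \ovt M$ and $M \ovt (A \rtimes \delta(\Gamma))$, and Lemma \ref{lem.no-embed-Delta-LG} applied with $G_1 = \delta(\Gamma)$ provides the required non-embeddings. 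Theorem \ref{thm.actual-conjugacy} then produces a partial isometry which, after matching amplifications using that $L(G \times G)$ is a $\mathrm{II}_1$ factor so that any two projections of equal trace are equivalent, yields a unitary $\Om \in (M \ovt M)^r$ with $\Om \Delta(LG) \Om^* \subset (LG \ovt LG)^r$. The main obstacles I anticipate are the careful identification of $\al_t \ot \al_t$ with the tensor length deformation of the combined Bernoulli structure on $\Gamma \sqcup \Gamma$, and the bookkeeping around the amplification convention (as in Remark \ref{rem.amplify-explicit}) so that the output partial isometry of Theorem \ref{thm.actual-conjugacy} becomes, after re-choosing the realization of $\Delta$ up to inner conjugacy, an honest unitary in $(M \ovt M)^r$ mapping into the intended amplification of $LG \ovt LG$.
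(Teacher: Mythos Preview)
Your proposal is correct and follows essentially the same route as the paper: the same Bernoulli viewpoint $M \ovt M = (LH)^{\Gamma \sqcup \Gamma} \rtimes (G \times G)$ with deformation $\al_t \ot \al_t$, the same use of Lemma \ref{lem.tussenstap} with $P = \Delta(L(\Gamma \times \{e\}))$ (and its $G_1 \leftrightarrow G_2$ counterpart) via Proposition \ref{prop.comult}.\ref{comult-four} to obtain uniform convergence, the same appeal to Lemma \ref{lem.no-embed-Delta-LG} for the stabilizer non-embeddings, and the same conclusion via Theorem \ref{thm.actual-conjugacy}. The amplification bookkeeping you flag is routine and is not spelled out in the paper either.
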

\begin{proof}
Also $M \ovt M$ can be viewed as a generalized Bernoulli crossed product $M \ovt M = (LH)^I \rtimes (G \times G)$, associated with $G \times G$ acting on the disjoint union $I := \Gamma \sqcup \Gamma$ of two copies of $\Gamma$. The corresponding tensor length deformation precisely is $\al_t \ot \al_t \in \Aut(\Mtil \ovt \Mtil)$.

Denote by $\delta : \Gamma \recht \Gamma \times \Gamma : \delta(h) = (h,h)$ the diagonal embedding. Observe that the stabilizer (in $G \times G$) of an element $i \in I$ is either of the form $G \times g \delta(\Gamma)g^{-1}$ or $g \delta(\Gamma) g^{-1} \times G$, with $g \in G$. Since $G$ is an icc group, the lemma will follow by applying Theorem \ref{thm.actual-conjugacy} to the generalized Bernoulli action $G \times G \actson (LH)^I$, provided that we prove the following two statements.
\begin{enumerate}
\item $\dis\sup_{g \in G} \|(\al_t \ot \al_t)\Delta(u_g) - \Delta(u_g)\|_2 \recht 0$ as $t \recht 0$.
\item $\Delta(L G) \not\prec M \ovt (A \rtimes \delta(\Gamma))$ and $\Delta(L G) \not\prec (A \rtimes \delta(\Gamma)) \ovt M$.
\end{enumerate}

{\bf Proof of 1.}\ By symmetry, it suffices to prove that
$$\sup_{g \in G} \|(\id \ot \al_t)\Delta(u_g) - \Delta(u_g)\|_2 \recht 0 \quad\text{as}\quad t \recht 0 \; .$$
Since every $g \in G$ is the product of an element in $\Gamma \times \{e\}$ and an element in $\{e\} \times \Gamma$, again by symmetry, it suffices to prove that
\begin{equation}\label{eq.aim-here}
\sup_{g \in \{e\} \times \Gamma} \|(\id \ot \al_t)\Delta(u_g) - \Delta(u_g)\|_2 \recht 0 \quad\text{as}\quad t \recht 0 \; .
\end{equation}
Denote $P := \Delta(L(\Gamma \times \{e\}))$. By Proposition \ref{prop.comult}.\ref{comult-four}, we have that $Pp$ is nonamenable relative to $M^r \ot 1$ for all nonzero projections $p \in P' \cap (M \ovt M)^r$. The unitaries $\Delta(u_g)$, $g \in \{e\} \times \Gamma$, all commute with $P$ and the normalizer of $P$ contains $\Delta(L G)$. So \eqref{eq.aim-here} follows from Lemma \ref{lem.tussenstap}.

{\bf Proof of 2.}\ Since $\delta(\Gamma)$ has infinite index in $G$, statement 2 follows immediately from Lemma \ref{lem.no-embed-Delta-LG}.
\end{proof}

\begin{lemma}\label{lem.embed-rel-comm}
Write $C := \Delta(A_0)' \cap (M \ovt M)^r$. Then $C \prec^f A \ovt A$.
\end{lemma}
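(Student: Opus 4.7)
The plan is to upgrade Lemma \ref{lem.embed-A}, which gives $\Delta(A_0)\prec^f A\ovt A$, to the relative commutant $C=\Delta(A_0)'\cap(M\ovt M)^r$, by leveraging the fact that $A\ovt A$ is a MASA in $M\ovt M$.

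First I would verify that $A\ovt A$ is a MASA in $M\ovt M$. This reduces to $A$ being a MASA in $M=A\rtimes G$, which is equivalent to the action $G\actson\widehat{\cH}=\widehat H^\Gamma$ being essentially free. For every $g=(a,b)\in G-\{e\}$, the fixed-point set $\{k\in\Gamma:akb^{-1}=k\}$ is either empty or a coset of the centralizer of $b$ in $\Gamma$, and in both our cases $\Gamma$ is icc (so centralizers of nontrivial elements have infinite index), forcing the Bernoulli-type action on $\widehat H^\Gamma$ to be essentially free.

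Second, I would observe that $C\subset\cN_{(M\ovt M)^r}(\Delta(A_0))\dpr$, because every unitary $c\in\cU(C)$ centralizes $\Delta(A_0)$ and hence normalizes it. It therefore suffices to establish the following normalizer principle: if $P\subset pNp$ is a von Neumann subalgebra with $P\prec^f Q$ for a MASA $Q\subset N$, then $\cN_{pNp}(P)\dpr\prec^f Q$. Applied to $P=\Delta(A_0)$, $Q=A\ovt A$, $N=(M\ovt M)^r$, this gives $\cN_{(M\ovt M)^r}(\Delta(A_0))\dpr\prec^f A\ovt A$, and hence $C\prec^f A\ovt A$.

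To prove the normalizer principle concretely, I would argue by contradiction using Theorem \ref{thm.intertwining}. Suppose $Cp_0\not\prec A\ovt A$ for some nonzero projection $p_0\in C'\cap(M\ovt M)^r$. Since $\Delta(A_0)\subset C$, we have $p_0\in\Delta(A_0)'\cap(M\ovt M)^r$, so Lemma \ref{lem.embed-A} provides a partial isometry $v\in\M_{1,n}(\C)\ot p_0(M\ovt M)^r$ with $vv^*=p_0$, $v^*v\in\M_n(\C)\ot(A\ovt A)$, and a unital embedding $\theta:\Delta(A_0)p_0\recht v^*v(\M_n(\C)\ot(A\ovt A))v^*v$ with $\Delta(a)p_0\,v=v\,\theta(\Delta(a)p_0)$. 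For any $c\in Cp_0$, the element $v^*cv$ commutes with $\theta(\Delta(A_0)p_0)\subset\M_n(\C)\ot(A\ovt A)$. Taking the $w_n\in\cU(Cp_0)$ provided by Theorem \ref{thm.intertwining}, a careful tracking of the MASA property of $A\ovt A$ and the fact that the intertwining is via a surjective-covering $v$ (arising from $\prec^f$, hence filling out a MASA in the corner) forces $v^*w_nv\in\M_n(\C)\ot(A\ovt A)$, contradicting $\|E_{A\ovt A}(x^*w_ny)\|_2\recht0$.

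The main obstacle is executing the last step: arguing that the intertwiner $v$ produced from $\Delta(A_0)p_0\prec A\ovt A$ can be arranged so that $\theta(\Delta(A_0)p_0)$ is maximal abelian in its corner, so that its relative commutant lies in $\M_n(\C)\ot(A\ovt A)$. This is the delicate use of $\prec^f$ (as opposed to mere $\prec$), and in the deformation/rigidity literature this step is typically handled by a maximality argument: taking a maximal family of orthogonal partial isometries whose images generate MASA pieces, $\prec^f$ ensures the supports sum to $p_0$.
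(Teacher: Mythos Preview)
Your proposed ``normalizer principle'' --- that $P\prec^f Q$ with $Q$ a MASA implies $\cN_{pNp}(P)''\prec^f Q$ --- is false, and so is its relative-commutant weakening $P'\cap pNp\prec^f Q$. A clean counterexample: let $N=L^\infty(\T^2)\rtimes\Z^2$ with $(m,n)\cdot(x,y)=(x+m\alpha,y+n\beta)$ for independent irrationals $\alpha,\beta$. Then $Q=L^\infty(\T^2)$ is a Cartan MASA. Take $P=L^\infty(\T)\ot\C1\subset Q$, so that $P\prec^f Q$ trivially. A direct computation gives $P'\cap N=L^\infty(\T^2)\rtimes(\{0\}\times\Z)$, and the unitaries $u_{(0,n)}$ witness $P'\cap N\not\prec Q$. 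The same example shows that your proposed mechanism (arranging $\theta(\Delta(A_0)p_0)$ to be maximal abelian in its corner of $\M_n\ot(M\ovt M)$) cannot work: the image of $P$ under any intertwiner into $Q$ sits inside $L^\infty(\T)\ot\C1$, whose commutant in the ambient factor is strictly larger than $Q$. The point is that $\prec^f$ controls \emph{which corners} of $P$ admit intertwiners, not \emph{how large} the images are; there is no maximality argument that forces the image to fill out a MASA.

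In the present situation the obstruction is even more apparent: $\cN_{(M\ovt M)^r}(\Delta(A_0))''$ contains $\Delta(M_0)$, and $\Delta(M_0)\not\prec A\ovt A$ by Proposition~\ref{prop.comult}\ref{comult-two}. So the full normalizer certainly does not embed, and your argument gives no way to separate $C$ from the rest of the normalizer. The paper's proof is entirely different and genuinely uses the deformation/rigidity structure: one first shows (by contradiction, via Lemma~\ref{lem.tussenstap} and Theorem~\ref{thm.actual-conjugacy}) that $C$ has an amenable direct summand relative to $M^r\ot1$, then upgrades this to full relative amenability using that $\Delta(M_0)$ normalizes $C$ and has trivial relative commutant. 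Only then does one invoke Theorem~\ref{thm.norm-amen} (which needs the specific hypotheses on $\Gamma$) to pass from relative amenability to the intertwining $C\prec^f A\ovt A$. None of this can be shortcut by a soft MASA argument.
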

\begin{proof}
We start by proving the existence of a nonzero projection $p \in C' \cap (M \ovt M)^r$ such that $C p$ is amenable relative to $M^r \ot 1$. Assume the contrary. Since the normalizer of $C$ contains $\Delta(M_0)$ and since all unitaries in $\Delta(A_0)$ commute with $C$, it follows from Lemma \ref{lem.tussenstap} that
$$\sup_{a \in \cU(A_0)} \|(\id \ot \al_t)\Delta(a) - \Delta(a)\|_2 \recht 0 \quad\text{as $t \recht 0$.}$$
Lemma \ref{lem.conjugate-G} implies in particular that
$$\sup_{g \in G} \|(\id \ot \al_t)\Delta(u_g) - \Delta(u_g)\|_2 \recht 0 \quad\text{as $t \recht 0$.}$$
Note that $\cW := \{a u_g \mid a \in \cU(A_0), g \in G\}$ is a group of unitaries generating $M_0$. The two formulae above imply that
\begin{equation}\label{eq.everywhere}
\sup_{b \in \cW} \|(\id \ot \al_t)\Delta(b) - \Delta(b)\|_2 \recht 0 \quad\text{as $t \recht 0$.}
\end{equation}
We now apply Theorem \ref{thm.actual-conjugacy}. Denote as above $\delta : \Gamma \recht \Gamma \times \Gamma : \delta(h) = (h,h)$. The stabilizer of an element $g \in \Gamma$ under the left-right action $G \actson \Gamma$ can be conjugated into $\delta(\Gamma)$. From Proposition \ref{prop.comult}.\ref{comult-two}, we know that $\Delta(M_0) \not\prec M^r \ovt (A \rtimes \delta(\Gamma))$. So \eqref{eq.everywhere} and Theorem \ref{thm.actual-conjugacy} imply that $\Delta(M_0)$ can be unitarily conjugated into $M^r \ovt L G$. This is in contradiction with Proposition \ref{prop.comult}.\ref{comult-two}.

So we indeed find a nonzero projection $p \in C' \cap (M \ovt M)^r$ such that $C p$ is amenable relative to $M^r \ot 1$. The normalizer of $C$ contains $\Delta(M_0)$ and by Lemma \ref{lem.triv-rel-comm}, we know that $$\Delta(M_0)' \cap (M \ovt M)^r = \C 1 \; .$$ So by Lemma \ref{lem.rel-amen-max-proj}, we conclude that $C$ is amenable relative to $M^r \ot 1$. Applying twice Theorem \ref{thm.norm-amen}, which is possible thanks to Lemma \ref{lem.assum-ok}, it follows that $C \prec^f M^r \ovt (A \rtimes (\Gamma \times \{e\}))$ and that $C \prec^f M^r \ovt (A \rtimes (\{e\} \times \Gamma))$. It then follows from Lemma \ref{lem.full-embed}.\ref{full-two} that $C \prec^f M^r \ovt A$. By symmetry, we also have that $C \prec^f A \ovt M^r$. Again using Lemma \ref{lem.full-embed}.\ref{full-two}, we reach the desired conclusion that $C \prec^f A \ovt A$.
\end{proof}

\begin{lemma}\label{lem.weak-mixing}
If $\cH \subset \Delta(A_0)' \cap (M \ovt M)^r$ is a finite-dimensional, globally $(\Ad \Delta(u_g))_{g \in G}$-invariant subspace, then $\cH \subset \C 1$.
\end{lemma}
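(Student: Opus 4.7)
I reduce to showing that no nontrivial finite-dimensional irreducible $\Delta G$-subrepresentation embeds into $\Delta(A_0)' \cap (M \ovt M)^r$: any $\Delta G$-invariant finite-dimensional $\cH$ decomposes as an orthogonal direct sum of irreducibles, and the trivial summands consist of $\Delta G$-fixed vectors, which lie in $\Delta(M_0)' \cap (M \ovt M)^r = \C 1$ by Lemma \ref{lem.triv-rel-comm}. Using Lemma \ref{lem.conjugate-G}, after unitary conjugation inside $(M \ovt M)^r$ I may further assume $\Delta(u_g) \in (LG \ovt LG)^r$ for all $g \in G$; such conjugation preserves the statement.

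The core is a two-step argument. \emph{Orbit analysis.} Decompose $M \ovt M = (A \ovt A) \rtimes (G \times G)$ and correspondingly
\[
L^2(M \ovt M) \;=\; \bigoplus_{(g,h) \in G \times G} L^2(A \ovt A)\,(u_g \ot u_h) \; .
\]
Because the amplification projection lies in $\M_n(\C) \ot M_0$ and acts only on the first $M$-tensor-factor, and because $\Delta(u_g)$ now lies in the purely group part $(LG \ovt LG)^r$, this decomposition lifts $\Delta G$-equivariantly to $L^2((M \ovt M)^r)$, with $\Ad \Delta(u_s)$ permuting the blocks via the diagonal conjugation $(g,h) \mapsto (sgs^{-1}, shs^{-1})$ on $G \times G$. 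Since $\Gamma$ is icc, $C_G(g) \cap C_G(h)$ has infinite index in $G$ for every $(g,h) \neq (e,e)$, so the $\Delta G$-orbit of $(g,h)$ is infinite. Writing $\phi : V \to L^2((M \ovt M)^r)$ for the equivariant embedding of an irreducible summand, the Hilbert-Schmidt norms $\|P_{(g,h)} \circ \phi\|_{HS}$ are constant on orbits; their total sum equals $\|\phi\|_{HS}^2 < \infty$, so only finite orbits can support $\phi$. The only finite orbit being $\{(e,e)\}$, this forces $\cH$ into the $(e,e)$-block, i.e., into the amplified $L^2(A \ovt A)$.

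\emph{Weak-mixing finish.} The $\Ad \Delta G$-action on $A \ovt A \cong L\bigl(H^{(\Gamma \sqcup \Gamma)}\bigr)$ is the generalized Bernoulli action of $G$ on two disjoint copies of $\Gamma$, with $G = \Gamma \times \Gamma$ acting by left-right multiplication on each copy. Each copy is a transitive, hence infinite, $G$-orbit, so by Lemma \ref{lem.prel-weak-mixing} the Bernoulli action is weakly mixing and $L^2(A \ovt A) \ominus \C 1$ has no nonzero finite-dimensional $G$-invariant subspaces. The assumption $\cH \subset \Delta(A_0)'$ is invoked here to rule out spurious finite-dimensional $\Delta G$-invariant subspaces coming from the matrix amplification, and I conclude $\cH \subset \C 1$. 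The main technical obstacle is the amplification bookkeeping in the orbit-analysis step, which is precisely what Lemma \ref{lem.conjugate-G} is used to manage, by placing $\Delta(LG)$ into the group part $(LG \ovt LG)^r$ so that the amplification projection interacts compatibly with the $(g,h)$-Fourier structure.
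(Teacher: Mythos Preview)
Your orbit-analysis step contains a genuine gap. After applying Lemma~\ref{lem.conjugate-G} you only know that $\Om\,\Delta(u_s)\,\Om^* \in (LG \ovt LG)^r$; you do \emph{not} know that it equals (a scalar multiple of) $u_s \ot u_s$, nor even that it is a group-like element $u_{\gamma_1(s)} \ot u_{\gamma_2(s)}$. A general unitary $w \in LG \ovt LG$ neither normalizes $A \ovt A$ nor permutes the Fourier blocks $L^2(A \ovt A)(u_g \ot u_h)$: writing $w = \sum_{k,l} c_{k,l}\, u_k \ot u_l$, conjugation by $w$ spreads each block over many $(g',h')$-components. So the assertion that $\Ad \Delta(u_s)$ acts on the block index by $(g,h) \mapsto (sgs^{-1}, shs^{-1})$ is unjustified, and with it the Hilbert--Schmidt constancy argument collapses. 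The same problem recurs in your ``weak-mixing finish'': even if $\cH$ landed in the amplified $L^2(A \ovt A)$, the action of $\Ad \Delta(u_s)$ there is \emph{not} the Bernoulli $G$-action unless $\Delta(u_s)$ is already a group-like tensor.

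What you are implicitly assuming is exactly the conclusion of Lemma~\ref{lem.strong-intertwine}, namely that (after a further unitary conjugacy and a character twist) $\Delta(u_g) = u_{\gamma_1(g)} \ot u_{\gamma_2(g)}$. But Lemma~\ref{lem.strong-intertwine} is proved \emph{after} Lemma~\ref{lem.weak-mixing} and explicitly invokes it, so this route is circular. The paper avoids the issue entirely: rather than analyzing the $(G \times G)$-Fourier structure, it passes to the $\Delta(M_0)$-bimodule $\cK := \overline{\Delta(M_0)\,\cH'}$ (with $\cH' = \cH \ominus E_{\Delta(M_0)}(\cH)$), applies the bimodule classification of Lemma~\ref{lem.comult-rel-commutant-bimod} to realize $\cK$ via automorphisms $\beta_i \in \Aut(M_0)$, and then uses the hypothesis $\cH \subset \Delta(A_0)'$ together with Lemma~\ref{lem.an-aut-lem} to force each $\beta_i$ into a very specific form. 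The final weak-mixing step is then a one-variable argument inside $L^2(A)$, not inside $A \ovt A$. Your use of the hypothesis $\cH \subset \Delta(A_0)'$ (``to rule out spurious \dots\ subspaces from the amplification'') is too vague to substitute for this.
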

\begin{proof}
Put $\cH' := \{x-E_{\Delta(M_0)}(x) \mid x \in \cH\}$. The main part of the proof consists in showing that $\cH' = \{0\}$. Assume on the contrary that $\cH' \neq \{0\}$. Note that $\cH'$ is a finite-dimensional, globally $(\Ad \Delta(u_g))_{g \in G}$-invariant subspace of $\Delta(A_0)' \cap (M \ovt M)^r$ and that $\cH' \subset (M \ovt M)^r \ominus \Delta(M_0)$.

Denote by $\cK$ the closed linear span of $\Delta(M_0) \cH'$ inside $L^2((M \ovt M)^r)$. Observe that $\cK$ is a $\Delta(M_0)$-$\Delta(M_0)$-subbimodule of $L^2((M \ovt M)^r) \ominus L^2(\Delta(M_0))$ that has finite left dimension. By Lemma \ref{lem.discrete-subgroup-Out}, the assumption that $H/H_0$ is torsion-free, and Lemma \ref{lem.comult-rel-commutant-bimod}, there exist automorphisms $\beta_1,\ldots,\beta_k \in \Aut(M_0)$ and a unitary $\psi : \cK \recht L^2(M_0)^{\oplus k} : \xi \mapsto (\psi_1(\xi),\ldots,\psi_k(\xi))$ such that
$$\psi_i(\Delta(x) \, \xi \, \Delta(y)) =  x \, \psi_i(\xi) \, \beta_i(y) \quad\text{for all}\;\; x,y \in M_0, \xi \in \cK, i = 1,\ldots,k,$$
and such that every $\beta_i$ generates a discrete infinite subgroup of $\Out(M_0)$.

Fix an $i \in \{1,\ldots,k\}$ and note that $\psi_i(\cH') \neq \{0\}$. Take a nonzero vector $\xi_0 \in \psi_i(\cH')$. Since the elements of $\cH'$ commute with $\Delta(A_0)$, it follows that that $\xi_0 \beta_i(a) = a \xi_0$ for all $a \in A_0$. By Lemma \ref{lem.an-aut-lem}, we then find a unitary $v \in \cN_M(M_0)$ and a character $\om : G \recht \T$ such that $\beta_i = (\Ad v) \circ \al_\om$ and such that $E_{M_0}(v) = 0$. Recall that $\al_\om(a u_g) = \om(g) \, a u_g$ for all $a \in A_0$ and all $g \in G$.

Put $\cH'_i := \psi_i(\cH') v$ and note that $\cH'_i$ is a finite-dimensional subspace of $L^2(M)$ such that $\xi a = a \xi$ for all $\xi \in \cH'_i$, $a \in A_0$ and such that $\cH'_i$ is globally invariant under $\xi \mapsto \overline{\om(g)} \, u_g \xi u_g^*$ for all $g \in G$. By Proposition \ref{prop.left-right-wreath}.\ref{left-right-c}, we have $A_0' \cap M = A$. So $\cH'_i \subset L^2(A)$. It follows that $\cH'_i$ is a finite-dimensional subspace of $L^2(A)$ that is globally invariant under the generalized Bernoulli action $G \actson A$. By Lemma \ref{lem.prel-weak-mixing}, the latter is weakly mixing. It follows that $\cH'_i \subset \C 1$. So, $\psi_i(\cH') \subset \C v^*$. Since $\psi_i(\cH') \subset L^2(M_0)$, while $v^*$ is orthogonal to $L^2(M_0)$, we find that $\psi_i(\cH') = \{0\}$, which is absurd.

So we have proven that $\cH' = \{0\}$, meaning that $\cH \subset \Delta(M_0)$. So $\cH = \Delta(\cH_0)$ where $\cH_0 \subset A_0' \cap M_0$ is a finite-dimensional, globally $(\Ad u_g)_{g \in G}$-invariant subspace. Since $A_0' \cap M_0 = A_0$ and since the action $G \actson A_0$ is weakly mixing, it follows that $\cH_0 \subset \C 1$. Then also $\cH \subset \C 1$.
\end{proof}

\begin{lemma}\label{lem.strong-intertwine}
We have that $r = 1$ and that there exist a unitary $v \in M_0$, a character $\om : G \recht \T$ and an injective group homomorphism $\rho : G \recht \Lambda$ such that
$$\om(g) \, v u_g v^* = v_{\rho(g)} \;\;\text{for all}\;\; g \in G \quad\text{and}\quad \Delta(v A_0 v^*) \subset v A_0 v^* \ovt v A_0 v^* \; .$$
\end{lemma}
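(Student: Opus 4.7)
The plan is to combine all the intertwining data accumulated in Lemmas \ref{lem.embed-A}, \ref{lem.conjugate-G}, \ref{lem.embed-rel-comm}, \ref{lem.triv-rel-comm} and \ref{lem.weak-mixing} and then invoke the general rigidity theorem for amplified comultiplications proven in \cite[Section 7]{IPV10}, applied to the wreath product decomposition $M_0 = A_0 \rtimes G$.

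First, using Lemma \ref{lem.conjugate-G}, I replace $\Delta$ by $\Ad(\Om) \circ \Delta$ so that $\Delta(LG) \subset (LG \ovt LG)^r$. Since unitary conjugation preserves intertwining, embeddability and weak mixing properties, all conclusions of Lemmas \ref{lem.embed-A}, \ref{lem.embed-rel-comm}, \ref{lem.triv-rel-comm} and \ref{lem.weak-mixing} persist. Next, combining $\Delta(A_0) \prec^f A \ovt A$ with the fact that $\Delta(LG)$ normalizes $\Delta(A_0)$ and already lives in $(LG \ovt LG)^r$, I would apply Lemma \ref{lem.embed-with-normalizer} in the amplified setting to find a unitary $w \in (M \ovt M)^r$ that simultaneously conjugates $\Delta(A_0)$ into $(A \ovt A)^r$ and keeps $\Delta(LG)$ inside $(LG \ovt LG)^r$. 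The weak mixing of the $G$-action on $\Delta(A_0)$ (Lemma \ref{lem.weak-mixing}) and the triviality of the relative commutant (Lemma \ref{lem.triv-rel-comm}) ensure that the restriction of the conjugated comultiplication to $LG$ is compatible with the generalized Bernoulli action structure on $A \ovt A$.

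At this stage the problem reduces to the hypotheses of the main comultiplication rigidity theorem of \cite[Section 7]{IPV10} in the amplified form: a comultiplication $\Delta : M_0 \recht (M_0 \ovt M_0)^r$ with $M_0 = A_0 \rtimes G$, with $\Delta(LG)$ sitting inside $(LG \ovt LG)^r$, with $\Delta(A_0)$ conjugated into $(A \ovt A)^r$, with trivial relative commutant, and satisfying the weak mixing condition of Lemma \ref{lem.weak-mixing}. The conclusion of that theorem, translated to our notation, gives exactly that $r = 1$, and that there exist a unitary $v \in M_0$, a character $\om : G \recht \T$ and a group homomorphism $\rho : G \recht \Lambda$ with $\om(g)\, v u_g v^* = v_{\rho(g)}$ for all $g \in G$ and $\Delta(v A_0 v^*) \subset v A_0 v^* \ovt v A_0 v^*$. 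Injectivity of $\rho$ is automatic because $\Delta|_{LG}$ is injective and $\Lambda$ is icc, so distinct group elements produce distinct generating unitaries.

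The main obstacle I anticipate is the careful bookkeeping required to upgrade the partial intertwiner coming from $\Delta(A_0) \prec^f A \ovt A$ to a genuine unitary conjugation in the amplified framework that simultaneously intertwines $\Delta(LG)$ into $(LG \ovt LG)^r$; this requires coupling the intertwining-by-bimodules machinery (Theorem \ref{thm.intertwining}) with the normalizer control of Lemma \ref{lem.embed-with-normalizer}, and using the weak mixing of Lemma \ref{lem.weak-mixing} to rule out the possibility that the resulting conjugation is only defined on a proper corner. Once this unitary $w$ is in hand, the identification of the group data $(v,\om,\rho)$ and the amplification equality $r = 1$ are direct consequences of \cite[Theorem 7.1]{IPV10}.
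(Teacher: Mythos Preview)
Your outline has the right ingredients at the start but misidentifies the key black-box from \cite{IPV10} and, more importantly, skips the substantial work that comes \emph{after} that black-box is applied.

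First, the result you want is not in \cite[Section 7]{IPV10} but is \cite[Theorem 6.1 and Corollary 6.2]{IPV10}. Its hypotheses are exactly what Lemmas \ref{lem.conjugate-G}, \ref{lem.embed-rel-comm} and \ref{lem.weak-mixing} verify (applied to $C := \Delta(A_0)' \cap (M \ovt M)^r$, not to $\Delta(A_0)$ itself), together with the non-intertwining conditions from Lemma \ref{lem.no-embed-Delta-LG}. Your plan to invoke Lemma \ref{lem.embed-with-normalizer} to get a simultaneous unitary conjugation is wrong: that lemma only produces a $\prec$-intertwining, not a unitary, and in any case it is not what is used here.

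Second, and this is the real gap: the output of \cite[Theorem 6.1 and Corollary 6.2]{IPV10} is only that $r=1$ and that there exist a unitary $\Om_1 \in M \ovt M$, a character $\om$ and homomorphisms $\gamma_1,\gamma_2 : G \recht G$ with $\Om_1 \Delta(A_0) \Om_1^* \subset A \ovt A$ and $\Om_1 \Delta(u_g) \Om_1^* = \om(g)\, u_{\gamma_1(g)} \ot u_{\gamma_2(g)}$. Note that $\Om_1$ lies in $M \ovt M$, not $M_0 \ovt M_0$; the target is $A \ovt A$, not $A_0 \ovt A_0$; and the homomorphisms land in $G$, not in $\Lambda$. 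Bridging each of these gaps is non-trivial: one uses the flip symmetry $\zeta \circ \Delta = \Delta$ together with Lemma \ref{lem.rel-icc-intertwine} to force $\gamma_1 = \gamma_2$ up to conjugacy; one then uses the action of $K_0 = \widehat{H/H_0}$ (which fixes $M_0$ pointwise) to show that a corrected unitary $\Om_2 := (V_x^* \ot V_x^*)\Om_1$ actually lies in $M_0 \ovt M_0$; finally one applies \cite[Lemma 3.4]{IPV10} to split $\Om_2 = (w^* \ot w^*)\Delta(v)$ and extract $\rho : G \recht \Lambda$ and $v \in M_0$, followed by another application of Lemma \ref{lem.rel-icc-intertwine} to reconcile the two descriptions. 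None of this is delivered by any single theorem in \cite{IPV10}, and your proposal does not address how $\Om_1$ is pulled down from $M \ovt M$ to $M_0 \ovt M_0$ --- which is precisely where the torsion-free hypothesis on $H/H_0$ and the structure of $M_0 \subset M$ enter.
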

\begin{proof}
We view $M \ovt M$ as the crossed product $M \ovt M = (A \ovt A) \rtimes (G \times G)$. By Proposition \ref{prop.left-right-wreath}.\ref{left-right-c}, we have that $(A \ovt A)' \cap (M \ovt M) = A \ovt A$, meaning that the generalized Bernoulli action $G \times G \actson A \ovt A$ is essentially free. By Lemma \ref{lem.conjugate-G} and after a unitary conjugacy of $\Delta$, we have $\Delta(L G) \subset (L G \ovt L G)^r$. Put $C := \Delta(A_0)' \cap (M \ovt M)^r$.

From Lemma \ref{lem.embed-rel-comm}, we know that $C \prec^f A \ovt A$. By construction, the unitaries $\Delta(u_g)$, $g \in G$, normalize $C$. By Lemma \ref{lem.weak-mixing}, the action $(\Ad \Delta(u_g))_{g \in G}$ on the center $\cZ(C)$ of $C$ is weakly mixing. Actually, Lemma \ref{lem.weak-mixing} says that even the action $(\Ad \Delta(u_g))_{g \in G}$ on $C$ has no nontrivial finite-dimensional invariant subspaces. This means that all the assumptions of \cite[Theorem 6.1]{IPV10} are satisfied. Denote by $N$ the von Neumann algebra generated by $C$ and the unitaries $(\Delta(u_g))_{g \in G}$. Then $\Delta(M_0) \subset N$ and it follows from Proposition \ref{prop.comult}.\ref{comult-two} that $N \not\prec M \ovt (A \rtimes G_1)$ and $N \not\prec (A \rtimes G_1) \ovt M$ whenever $G_1 < G$ has infinite index. So also all the assumptions of \cite[Corollary 6.2]{IPV10} are satisfied. From \cite[Theorem 6.1 and Corollary 6.2]{IPV10}, it then follows that $r = 1$ and that there exist a unitary $\Om_1 \in M \ovt M$, a character $\om : G \recht \T$ and group homomorphisms $\gamma_1,\gamma_2 : G \recht G$ such that
\begin{equation}\label{eq.big-step}
\Om_1 \Delta(A_0) \Om_1^* \subset A \ovt A \quad\text{and}\quad \Om_1 \Delta(u_g) \Om_1^* = \om(g) \, u_{\gamma_1(g)} \ot u_{\gamma_2(g)} \; .
\end{equation}
Since $r = 1$, we may from now on assume that $M_0 = L \Lambda$ and that $\Delta : M_0 \recht M_0 \ovt M_0$ is the original comultiplication given by $\Delta(v_s) = v_s \ot v_s$ for all $s \in \Lambda$.

By \eqref{eq.big-step} and Lemma \ref{lem.no-embed-Delta-LG}, the ranges of $\gamma_1$ and $\gamma_2$ are finite index subgroups of $G$.
Denote by $\zeta : M \ovt M \recht M \ovt M : \zeta(x \ot y) = y \ot x$ the flip automorphism. Since $\zeta \circ \Delta = \Delta$, it follows from \eqref{eq.big-step} that
$$(u_{\gamma_2(g)} \ot u_{\gamma_1(g)}) \; \zeta(\Om_1) \, \Om_1^* \; (u_{\gamma_1(g)}^* \ot u_{\gamma_2(g)}^*) = \zeta(\Om_1) \, \Om_1^*  \quad\text{for all}\;\; g \in G \; .$$
Because $G$ is icc and because the subgroups $\gamma_1(G) < G$ and $\gamma_2(G) < G$ have finite index, we get that $\{(\gamma_2(g) x \gamma_2(g)^{-1},\gamma_1(g) y \gamma_1(g)^{-1}) \mid g \in G\}$ is an infinite set for all $(x,y) \in (\cG \times \cG) - \{e\}$. By Lemma \ref{lem.rel-icc-intertwine}, we then find an $h \in G$ such that $\gamma_1(g) = h \gamma_2(g) h^{-1}$ for all $g \in G$. This means that after replacing $\Om_1$ by $(1 \ot u_h) \Om_1$, we may assume that $\gamma_1 = \gamma_2$. We denote this homomorphism as $\gamma$. It then also follows that $\zeta(\Om_1)$ is a multiple of $\Om_1$. Since $\Delta(u_g)$ and $u_{\gamma(g)} \ot u_{\gamma(g)}$ are unitarily conjugate, the homomorphism $\gamma$ is injective.

Define $K_0 := \widehat{H / H_0}$ and identify $K_0$ with the group of characters on $H$ that are equal to $1$ on $H_0$. Whenever $\eta \in K_0$, the formula
$$\etatil : x g \mapsto \eta\Bigl(\sum_{h \in \Gamma} x_h\Bigr) \quad\text{for all}\;\; x \in H^{(\Gamma)}, g \in G \; ,$$
defines a character on $\cG$ and hence an automorphism $\al_\eta \in \Aut(M)$ by the formula $\al_\eta(u_z) = \etatil(z) u_z$ for all $z \in \cG$. Since $\eta$ equals $1$ on $H_0$, we get that $\al_\eta(a) = a$ for all $a \in M_0$. More precisely, $(\al_\eta)_{\eta \in K_0}$ is a continuous action of $K_0$ on $M$ and the fixed point algebra of this action equals $M_0$.

Let $\eta,\eta' \in K_0$. Applying $\al_\eta \ot \al_{\eta'}$ to \eqref{eq.big-step}, it follows that $\Om_1^* (\al_\eta \ot \al_{\eta'})(\Om_1)$ commutes with $u_{\gamma(g)} \ot u_{\gamma(g)}$ for all $g \in G$. Since $G$ is icc and $\gamma(G) < G$ has finite index, we have that $\{(\gamma(g)x\gamma(g)^{-1},\gamma(g)y\gamma(g)^{-1}) \mid g \in G\}$ is an infinite set for all $(x,y) \in (\cG \times \cG) - \{e\}$. Using Lemma \ref{lem.prel-weak-mixing}, it follows that $\Om_1^* (\al_\eta \ot \al_{\eta'})(\Om_1)$ must be a multiple of $1$ and
we find $\Psi(\eta,\eta') \in \T$ such that
\begin{equation}\label{eq.invariance}
(\al_\eta \ot \al_{\eta'})(\Om_1) = \Psi(\eta,\eta') \, \Om_1 \quad\text{for all}\;\; (\eta,\eta') \in K_0 \times K_0 \; .
\end{equation}
It follows that $\Psi$ is a continuous character on $K_0 \times K_0$. Since $\zeta(\Om_1)$ is a multiple of $\Om_1$, we also get that $\Psi(\eta,\eta') = \Psi(\eta',\eta)$ for all $(\eta,\eta') \in K_0 \times K_0$. Since $\widehat{K_0} = H/H_0$, we find an $x \in H$ such that $\Psi(\eta,\eta') = \eta(x) \eta'(x)$ for all $(\eta,\eta') \in K_0 \times K_0$.

For every $g \in \Gamma$, denote by $\pi_g : L H \recht (L H)^\Gamma$ the embedding of $LH$ as the $g$-th tensor factor. Write $V_x := \pi_e(u_x)$ and put $\Om_2 := (V_x^* \ot V_x^*)\Om_1$. From \eqref{eq.invariance}, it follows that $\Om_2 \in M_0 \ovt M_0$. Denote by $x_e \in H^{(\Gamma)}$ the element $x \in H$ viewed in position $e$. Define the injective group homomorphism $\gamma' : G \recht \cG_0 : \gamma'(g) = x^{-1}_e \gamma(g) x_e$. It follows from \eqref{eq.big-step} that
\begin{equation}\label{eq.crucial-tussen}
\Om_2^* (u_{\gamma'(g)} \ot u_{\gamma'(g)}) \Om_2 = \Delta(\overline{\om(g)} u_g) \quad\text{for all}\;\; g \in G \; .
\end{equation}
Since $\gamma(G)$ has finite index in $G$ and since $G$ is icc, we have that $\{\gamma'(g) x \gamma'(g)^{-1} \mid g \in G \}$ is an infinite set for all $x \in \cG_0 - \{e\}$. By Lemma \ref{lem.prel-weak-mixing}, we get that the representation $(\Ad(u_{\gamma'(g)}))_{g \in G}$ on $L^2(M_0) \ominus \C 1$ is weakly mixing. It then follows from \eqref{eq.crucial-tussen} and \cite[Lemma 3.4]{IPV10} that there exist unitaries $w,v \in M_0$, a character $\om' : G \recht \T$ and an injective group homomorphism $\rho : G \recht \Lambda$ such that
$$w u_{\gamma'(g)} w^* = \om'(g) \, v_{\rho(g)} \;\;\text{for all}\;\; g \in G \quad\text{and}\quad \Om_2 = (w^* \ot w^*) \Delta(v) \; .$$
In combination with \eqref{eq.crucial-tussen}, we get that
$$\om'(g)^2 \, \Delta(v_{\rho(g)}) = \om'(g) \, v_{\rho(g)} \; \ot \; \om'(g) \, v_{\rho(g)} = w u_{\gamma'(g)} w^* \ot w u_{\gamma'(g)} w^*
= \Delta(\overline{\om(g)} \, v u_g v^*)$$
for all $g \in G$. So also $v u_g v^* = \om(g) \om'(g)^2 \, v_{\rho(g)}$. This implies that
$$u_{\gamma'(g)}^* \, w^* v \, u_g = \om(g) \om'(g) \, w^* v \quad\text{for all}\;\; g \in G \; .$$
Lemma \ref{lem.rel-icc-intertwine} then provides an element $k \in \cG_0$ such that $\gamma'(g) = k g k^{-1}$ for all $g \in G$. It follows that $u_k^* w^* v \in \C 1$ and that $\om' = \overline{\om}$. So, $w$ is a multiple of $v u_k^*$ and
$$\om(g) \, v u_g v^* = v_{\rho(g)} \quad\text{for all}\;\; g \in G \; .$$
From \eqref{eq.big-step}, we know that $\Om_2 \Delta(A_0) \Om_2^* \subset A_0 \ovt A_0$. Since $\Om_2 = (w^* \ot w^*) \Delta(v)$ and since $w$ is a multiple of $v u_k^*$, we conclude that $\Delta(v A_0 v^*) \subset v A_0 v^* \ovt v A_0 v^*$.
\end{proof}

\begin{proof}[{\bf Proof of Theorem \ref{thm.main}}]
The proof consists of three different parts.

\subsection*{\boldmath Writing $\Lambda$ as a semidirect product $\Sigma \rtimes G$}

We do not explicitly write the isomorphism $\pi : L \Lambda \recht (L \cG_0)^r$, but directly identify $L \Lambda = L (\cG_0)^r$. We denote by $\Delta : L \Lambda \recht L \Lambda \ovt L \Lambda$ the comultiplication given by $\Delta(v_s) = v_s \ot v_s$ for all $s \in \Lambda$. Recall from \cite[Lemma 7.1]{IPV10} that a von Neumann subalgebra $P \subset L \Lambda$ satisfies $\Delta(P) \subset P \ovt P$ if and only if $P = L S$ for a subgroup $S < \Lambda$.

As above, we denote $\cH := H^{(\Gamma)}$ and $\cH_0 := p_H^{-1}(H_0)$. We write $G := \Gamma \times \Gamma$ and $A := L \cH = (L H)^\Gamma$, with its subalgebra $A_0 := L \cH_0$. Finally, $M := A \rtimes G = L(\cH \rtimes G)$ and $M_0 := A_0 \rtimes G = L \cG_0$. For every character $\om : \cG_0 \recht \T$, we denote by $\al_\om$ the induced automorphism of $M_0$ given by $\al_\om(u_x) = \om(x) u_x$ for all $x \in \cG_0$.

By Lemma \ref{lem.strong-intertwine}, we get that $r = 1$ and that we can compose the identification $L \Lambda = L \cG_0$ with an inner automorphism of $L \cG_0$ and an automorphism of the form $\al_\om$ for a character $\om : G \recht \T$ such that after these compositions, we have
\begin{equation}\label{eq.important}
\Delta(A_0) \subset A_0 \ovt A_0 \quad\text{and}\quad u_g = v_{\rho(g)} \;\;\text{for all}\;\; g \in G \; ,
\end{equation}
where $\rho : G \recht \Lambda$ is an injective group homomorphism. It follows that $A_0 = L \Sigma$ for an abelian subgroup $\Sigma < \Lambda$ and that we have written $\Lambda$ as a semidirect product $\Lambda = \Sigma \rtimes G$, where $G$ acts on $\Sigma$ by group automorphisms. So from now on, we may assume that $\Lambda = \Sigma \rtimes G$ in such a way that $L \Sigma = A_0$ and $v_g = u_g$ for all $g \in G$ (denoting as above by $(v_s)_{s \in \Lambda}$ the canonical unitaries for $L \Lambda$, and by $(u_a)_{a \in \cG_0}$ the canonical unitaries for $L \cG_0$).

\subsection*{\boldmath Proving that $\Sigma$ is of the form $p_{H'}^{-1}(H'_0)$}

Whenever we view $\Gamma$ as the index set of the infinite tensor product $A = (L H)^\Gamma$, we denote the elements of $\Gamma$ by the letters $i,j$. We denote by $g \cdot i$ the left-right action of $g \in G$ on $i \in \Gamma$. We denote by $\pi_i : L H \recht (L H)^\Gamma$ the embedding of $L H$ into $(L H)^\Gamma$ as the $i$-th tensor factor. We denote by $(\si_g)_{g \in G}$ the generalized Bernoulli action given by $\si_g \circ \pi_i = \pi_{g \cdot i}$. We finally denote by $\delta : \Gamma \recht G : \delta(g) = (g,g)$ the diagonal embedding. Since $\Gamma$ is icc, we have that $\delta(\Gamma) \cdot i$ is infinite for all $i \in \Gamma - \{e\}$. By Lemma \ref{lem.prel-weak-mixing}, the action $(\si_{\delta(g)})_{g \in \Gamma}$ on $(L H)^{\Gamma - \{e\}}$ is weakly mixing and we have that
\begin{align}
& \pi_e(L H_0) = \{a \in A_0 \mid \si_{\delta(g)}(a) = a \;\;\text{for all}\;\; g \in \Gamma\} \; , \label{111}\\
& \pi_e(L H_0) \ovt \pi_e(L H_0) = \{a \in A_0 \ovt A_0 \mid (\si_{\delta(g)} \ot \si_{\delta(g)})(a) = a \;\;\text{for all}\;\; g \in \Gamma\} \; , \label{222}\\
& \pi_e(L H) \ovt \pi_e(L H) = \{a \in A \ovt A \mid (\si_{\delta(g)} \ot \si_{\delta(g)})(a) = a \;\;\text{for all}\;\; g \in \Gamma\} \; .\label{333}
\end{align}

For the rest of the proof, we only consider the comultiplication $\Delta$ restricted to $L \Sigma$. Since $v_g = u_g$ for all $g \in G$, we have that $\Delta \circ \si_g = (\si_g \ot \si_g) \circ \Delta$ for all $g \in G$. Using \eqref{111} and \eqref{222}, it then follows that $\Delta(\pi_e(L H_0)) \subset \pi_e(L H_0) \ovt \pi_e(L H_0)$. This means that we find an abelian group $H'_1$ with corresponding comultiplication $\Delta_1 : L H'_1 \recht L H'_1 \ovt L H'_1$, and an identification $L H'_1 = L H_0$ such that $\Delta \circ \pi_e = (\pi_e \ot \pi_e) \circ \Delta_1$.
Composing with $(\si_g \ot \si_g)_{g \in G}$, it follows that $\Delta \circ \pi_i = (\pi_i \ot \pi_i) \circ \Delta_1$ for all $i \in \Gamma$.
So we can view $\pi_i$ as well as an injective group homomorphism of $H'_1$ into $\Sigma$. Since the von Neumann algebras $\pi_i(L H_0)$, $i \in \Gamma$, are in tensor product position inside $L \Sigma$, it follows that the subgroups $\pi_i(H'_1) < \Sigma$, $i \in \Gamma$, are in direct sum position inside $\Sigma$.

Fix an element $x \in H$. The formula $\Om_x(g) := \pi_e(u_x) \pi_{g \cdot e}(u_x^*)$ defines a $1$-cocycle for the action $(\si_g)_{g \in G}$ on $A_0$. Hence $g \mapsto \Delta(\Om_x(g))$ is a $1$-cocycle for the generalized Bernoulli action $(\si_g \ot \si_g)_{g \in G}$ on $(L H)^\Gamma \ovt (L H)^\Gamma$. By Popa's cocycle superrigidity theorem \cite[Theorem 1.1]{Po06b}, we find a unitary $\cV_x \in (L H)^\Gamma \ovt (L H)^\Gamma$ such that
$$\Delta(\Om_x(g)) = \cV_x \, (\si_g \ot \si_g)(\cV_x^*) \quad\text{for all}\;\; g \in G \; .$$
By construction, $\Om_x(\delta(g)) = 1$ for all $g \in \Gamma$. From \eqref{333}, it then follows that $\cV_x = (\pi_e \ot \pi_e)(\cU_x)$ for a unitary $\cU_x \in L H \ovt L H$. So we get that
$$\Delta(\pi_e(u_x) \pi_{g \cdot e}(u_x^*)) = (\pi_e \ot \pi_e)(\cU_x) \, (\pi_{g \cdot e} \ot \pi_{g \cdot e})(\cU_x^*) \quad\text{for all}\;\; x \in H, g \in G \; .$$
Applying $\si_h \ot \si_h$ for an arbitrary $h \in G$, and combining with the earlier definition of $\Delta_1$, we find that
\begin{equation}\label{eq.formulae-comult-Delta}
\begin{split}
& \Delta((\pi_i \ot \pi_j)(u_x \ot u_x^*)) = (\pi_i \ot \pi_i)(\cU_x) \, (\pi_j \ot \pi_j)(\cU_x^*) \quad\text{for all}\;\; x \in H, i,j \in \Gamma \; ,\\
& \Delta \circ \pi_i = (\pi_i \ot \pi_i) \circ \Delta_1 \quad\text{for all}\;\; i \in \Gamma \; .
\end{split}
\end{equation}
Define $H_2 := \{(x,y) \in H \times H \mid x+y \in H_0\}$. Then $H_2$ is generated by the subgroups $H_0 \times H_0$ and $\{(x,-x) \mid x \in H\}$.
Since $L H'_1 = L H_0$, the von Neumann algebra generated by the elements $\{(\pi_i \ot \pi_j)(u_x \ot u_x^*) \mid i,j \in \Gamma, x \in H\}$, together with the algebras $\pi_i(L H'_1)$, $i \in \Gamma$, equals the von Neumann algebra generated by all the $(\pi_i \ot \pi_j)(L H_2)$, which is the whole of $A_0= L \Sigma$.

So the formulae in \eqref{eq.formulae-comult-Delta} entirely determine $\Delta$.
Also note that for a given $x \in H$, the unitary $\cU_x$ is uniquely determined up to multiplication by a scalar in $\T$. Finally observe that for $x \in H_0$, we have $\cU_x = \Delta_1(u_x)$, up to multiplication by a scalar in $\T$. In particular, $\cU_x \in L H_0 \ovt L H_0$ whenever $x \in H_0$.

For all distinct $i,j \in \Gamma$, denote by $\pi_{ij} : L H_2 \recht A_0$ the embedding into the $i$'th and $j$'th coordinate.
It follows from
\eqref{eq.formulae-comult-Delta} that we can identify $L H_2 = L H'_2$ for some abelian group $H'_2$ with the corresponding comultiplication $\Delta_2 : L H'_2 \recht L H'_2 \ovt L H'_2$ given by
the following formulae that use the tensor leg numbering notation.
\begin{equation}\label{eq.formulae-comult-Delta-two}
\begin{split}
& \Delta_2(u_x \ot u_x^*) = (\cU_x)_{13} \, (\cU_x^*)_{24} \quad\text{for all}\;\; x \in H \; ,\\
& \Delta_2(a \ot b) = (\Delta_1(a))_{13} \, (\Delta_1(b))_{24} \quad\text{for all}\;\; a,b \in LH'_1 \; .
\end{split}
\end{equation}
By construction, we have $\Delta \circ \pi_{ij} = (\pi_{ij} \ot \pi_{ij}) \circ \Delta_2$. So we can view $\pi_{ij}$ as an injective group homomorphism $\pi_{ij} : H'_2 \recht \Sigma$. Note that we can naturally view $H'_1 \times H'_1$ as a subgroup of $H'_2$ and that under this identification $\pi_{ij}(a,b) = \pi_i(a) + \pi_j(b)$ for all $(a,b) \in H'_1 \times H'_1$.

We denote by $K := \widehat{H}$ the group of characters on $H$ and by $K_0 < K$ the closed subgroup of characters that are identically $1$ on $H_0$. We identify $K_0 = \widehat{H/H_0}$. Whenever $\om \in K$, we denote by $\al_\om \in \Aut(L H)$ the induced automorphism given $\al_\om(u_x) = \om(x) u_x$ for all $x \in H$. Applying $\al_\om$ in the $i$-th coordinate yields the automorphism $\al^i_\om \in \Aut((LH)^\Gamma)$, while applying $\al_\om$ in all coordinates yields the automorphism $\al^\Gamma_\om \in \Aut((LH)^\Gamma)$. By construction, we have that $\al^\Gamma_\om \circ \pi_i = \pi_i \circ \al_\om$. A given $a \in A= (LH)^\Gamma$ belongs to $A_0$ if and only if $\al^\Gamma_\om(a) = a$ for all $\om \in K_0$.

Fix $x \in H$. Since $\Delta(A_0) \subset A_0 \ovt A_0$, the left hand side of the formulae in \eqref{eq.formulae-comult-Delta} is invariant under $\al^\Gamma_\om \ot \id$ for all $\om \in K_0$. Since $\cU_x$ is uniquely determined up to a scalar, it follows that $(\al_\om \ot \id)(\cU_x)$ is a multiple of $\cU_x$ for every $\om \in K_0$. So we find an element $\gamma(x) \in H/H_0$ such that
$$(\al_\om \ot \id)(\cU_x) = \om(\gamma(x)) \, \cU_x \quad\text{for all}\;\; \om \in K_0 \; .$$
When $x \in H_0$, we have that $\cU_x \in LH_0 \ovt LH_0$ and hence $\gamma(x) = 0$. It follows that $\gamma$ is a well-defined group homomorphism from $H/H_0$ to $H/H_0$.

The formulae in \eqref{eq.formulae-comult-Delta} entirely determine $\Delta$ so that it follows that $(\al^i_\om \ot \id) \circ \Delta = \Delta \circ \al^i_{\om \circ \gamma}$ for all $i \in \Gamma$ and all $\om \in K_0$. Using Lemma \ref{lem.descr-aut}, we conclude that $\gamma = \id$ and that every automorphism $\al^i_\om$ is induced by a character of $\Sigma$. It follows that there are group homomorphisms $\psi_i : \Sigma \recht H/H_0$ such that
$$\al^i_\om(v_s) = \om(\psi_i(s)) \, v_s \quad\text{for all}\;\; s \in \Sigma, i \in \Gamma, \om \in K_0 \; .$$
A similar reasoning, using \eqref{eq.formulae-comult-Delta-two} instead of \eqref{eq.formulae-comult-Delta}, provides a homomorphism $\psi : H'_2 \recht H/H_0$ such that $(\al_\om \ot \id)(v_s) = \om(\psi(s)) \, v_s$ for all $s \in H'_2$, $\om \in K_0$.

Since $\al^i_\om \circ \pi_{ij} = \pi_{ij} \circ (\al_\om \ot \id)$, we have that $\psi_i \circ \pi_{ij} = \psi$. Since $(\al_{\overline{\om}} \ot \id)(x) = (\id \ot \al_\om)(x)$ for all $x \in L H'_2$, we have $\al^j_\om \circ \pi_{ij} = \pi_{ij} \circ (\al_{\overline{\om}} \ot \id)$. Hence $\psi_j \circ \pi_{ij} = -\psi$.
We further have that $\psi_k \circ \pi_{ij} = 0$ if $k \not\in \{i,j\}$.

We already observed above that the subgroups $\pi_i(H'_1) < \Sigma$, $i \in \Gamma$, are in a direct sum position. Denote by $\Sigma_1 < \Sigma$ the subgroup generated by the $\pi_i(H'_1)$, $i \in \Gamma$. Since $L H'_1 = L H_0$, we have that $L \Sigma_1 = (L H_0)^\Gamma$. It follows that
$$L \Sigma_1 = \{x \in A_0 \mid \al^i_\om(x) = x \;\;\text{for all}\;\; i \in \Gamma, \om \in K_0 \} \;\;\text{and hence}\;\; \Sigma_1 = \bigcap_{i \in I} \Ker \psi_i \; .$$

Every permutation $\beta \in \Perm \Gamma$ defines an automorphism $\gamma_\beta$ of $(LH)^\Gamma$ by permuting the tensor factors. It follows from \eqref{eq.formulae-comult-Delta} that $(\gamma_\beta \ot \gamma_\beta) \circ \Delta = \Delta \circ \gamma_\beta$, so that $\gamma_\beta$ induces a group automorphism of $\Sigma$. By construction, we have $\gamma_\beta \circ \pi_i = \pi_{\beta(i)}$ and $\gamma_\beta \circ \pi_{ij} = \pi_{\beta(i),\beta(j)}$.

It is now easy to check that all assumptions of Lemma \ref{lem.abstract-combinatorial} are satisfied. We conclude from Lemma \ref{lem.abstract-combinatorial} that there exists an abelian group $H'$ with subgroup $H'_0 < H'$ and a $G$-equivariant group isomorphism $p_{H'}^{-1}(H'_0) \recht \Sigma$.

\subsection*{\boldmath Proving that the isomorphism $\pi$ is of the required form}

We put $\cH' := {H'}^{(\Gamma)}$ and $\cH'_0 := p_{H'}^{-1}(H_0')$. Precomposing the original identification of $L \Sigma$ and $L \cH_0$, with the above identification of $L \Sigma$ and $L \cH'_0$, we have brought us to the point where $\Lambda = \cH'_0 \rtimes G$ and where the isomorphism
$$\pi : L(\cH'_0 \rtimes G) \recht L(\cH_0 \rtimes G)$$
satisfies $\pi(L \cH'_0) = L \cH_0$ and $\pi(u_g) = u_g$ for all $g \in G$.

Denote by $\vphi : L \cH'_0 \recht L \cH_0$ the restriction of $\pi$ to $L \cH'_0$. Note that $\vphi$ is a $G$-equivariant $*$-isomorphism. To conclude the proof of Theorem \ref{thm.main}, it remains to prove that $\vphi$ must be of the following special form: there exist a group isomorphism $\gamma : H'/H'_0 \recht H/H_0$, a $G$-invariant character $\mu : \cH_0 \recht \T$ and a trace preserving $*$-isomorphism $\vphi_0 : L H' \recht L H$ such that $\vphi_0 \circ \al_{\om \circ \gamma} = \al_\om \circ \vphi_0$ for all $\om \in \widehat{H/H_0}$ and such that $\vphi = \al_\mu \circ \vphi_0^\Gamma$. Here the $*$-isomorphism $\vphi_0^\Gamma : (L H')^\Gamma \recht (L H)^\Gamma$ is defined as the infinite tensor product of copies of $\vphi_0$.

Denote $K = \widehat{H}$, $K' = \widehat{H'}$, $K_0 = \widehat{H/H_0}$ and $K'_0 = \widehat{H'/H'_0}$. Consider the compact group $K^\Gamma$ and embed $K_0$ as a subgroup of $K^\Gamma$ diagonally. We similarly consider $K'_0 < {(K')}^\Gamma$. We identify
$$L \cH'_0 = L^\infty\Bigl(\frac{(K')^\Gamma}{K'_0}\Bigr) \quad\text{and}\quad L \cH_0 = L^\infty\Bigl(\frac{K^\Gamma}{K_0}\Bigr) \; .$$
We can then view $\vphi = \theta_*$ where $\theta$ is a probability measure preserving (pmp), $G$-equivariant isomorphism
$$\theta :  \frac{(K')^\Gamma}{K'_0} \recht \frac{K^\Gamma}{K_0} \; .$$
Consider the natural actions $G \times K'_0 \actson {(K')}^\Gamma$ and $G \times K_0 \actson K^\Gamma$. By Popa's cocycle superrigidity theorem \cite[Theorem 1.1]{Po06b} and \cite[Lemma 5.2]{PV06}, there exist a pmp isomorphism\linebreak $\thetatil : {(K')}^\Gamma \recht K^\Gamma$, a group homomorphism $\beta : G \recht K_0 : g \mapsto \beta_g$ and a continuous group isomorphism $\gammah : K'_0 \recht K'$ such that
\begin{equation}\label{eq.formula-thetatil}
\thetatil((g,k) \cdot \om) = (g,\beta_g \gammah(k)) \cdot \thetatil(\om) \quad\text{and}\quad \thetatil(\om) + K_0 = \theta(\om + K'_0) \; ,
\end{equation}
for all $(g,k) \in G \times K'_0$ and a.e.\ $\om \in {(K')}^\Gamma$.

Fix $x \in H$ and denote $F_x : K^\Gamma \recht \T : F_x(\om) = \om_e(x)$. As before, denote by $\delta : \Gamma \recht G : \delta(g) = (g,g)$ the diagonal embedding. One checks that
$$(F_x \circ \thetatil)(\delta(g) \cdot \om) = \beta_g(x) \, (F_x \circ \thetatil)(\om) \;\;\text{for all}\;\; g \in \Gamma \;\;\text{and a.e.}\;\; \om \in {(K')}^\Gamma \; .$$
Since $\Gamma$ is icc, it follows from Lemma \ref{lem.prel-weak-mixing} that the action of $\delta(\Gamma)$ on ${(K')}^{\Gamma-\{e\}}$ is weakly mixing, so that the function $\om \mapsto (F_x \circ \thetatil)(\om)$ only depends on the coordinate $\om_e$. Since this holds for all $x \in H$, we find a pmp isomorphism $\theta_0 : K' \recht K$ such that $(\thetatil(\om))_e = \theta_0(\om_e)$ for a.e.\ $\om$. By construction, we have $\theta_0(k + \om) = \gammah(k) + \theta_0(\om)$ for all $k \in K'_0$ and a.e.\ $\om \in K'$. Writing $\vphi_0 := (\theta_0)_*$, we obtain the trace preserving $*$-isomorphism $\vphi_0 : L H' \recht L H$ satisfying $\vphi_0 \circ \al_{\om \circ \gamma} = \al_\om \circ \vphi_0$ for all $\om \in \widehat{H/H_0}$.

Evaluating \eqref{eq.formula-thetatil} in the coordinate $e$, we find that $\beta_{\delta(g)} = 0$ for all $g \in \Gamma$, so that $\beta_{(g,h)} = \rho_g - \rho_h$ for a group homomorphism $\rho : \Gamma \recht K_0 : g \mapsto \rho_g$. We also find that $\thetatil(\om)_g = \theta_0(\om_g) + \rho_g$ for all $g \in \Gamma$ and a.e.\ $\om \in {(K')}^\Gamma$. Define $\mu \in K^\Gamma / K_0$ as $\mu := (\rho_g)_{g \in \Gamma} + K_0$. Then $\mu$ is a $G$-invariant element of $K^\Gamma / K_0$, i.e.\ a $G$-invariant character on $\cH_0$. By construction, we have that $\vphi = \al_\mu \circ \vphi_0^\Gamma$.
\end{proof}

\subsection*{A combinatorial lemma}

Whenever $I$ is a countable set and $H$ is a countable abelian group with subgroup $H_0 < H$, we consider the direct sum $H^{(I)}$, the group homomorphism
$$p_H : H^{(I)} \recht H : p_H(x) = \sum_{g \in I} x_g$$
and the subgroup $p_H^{-1}(H_0)$ of $H^{(I)}$. The group $\Perm I$ of all permutations of $I$ acts on $H^{(I)}$ by group automorphisms that leave the subgroup $p_H^{-1}(H_0)$ globally invariant.

For every $i \in I$, we have a natural embedding $\mu_i : H_0 \recht p_H^{-1}(H_0)$ of $H_0$ into the $i$-th coordinate. Writing $(H \times H)_{H_0} := \{(x,y) \in H \times H \mid x+y \in H_0\}$, we also have natural embeddings $\mu_{ij} : (H \times H)_{H_0} \recht p_H^{-1}(H_0)$ into the $i$-th and $j$-th coordinate, whenever $i$ and $j$ are distinct elements of $I$. The subgroups $\mu_{ij}((H \times H)_{H_0})$ generate $p_H^{-1}(H_0)$.

The following elementary lemma abstractly characterizes this whole setup. The lemma is actually much more awkward to state than to prove.

\begin{lemma}\label{lem.abstract-combinatorial}
Let $\Sigma$ be a countable abelian group and $I$ a countably infinite set. Assume that we are given the following data:
\begin{itemize}
\item countable abelian groups $H_1$ and $H_2$ such that $H_1 \times H_1 < H_2$,
\item for all $i \in I$, an injective homomorphism $\pi_i : H_1 \recht \Sigma$,
\item for all distinct $i,j \in I$, an injective homomorphism $\pi_{ij} : H_2 \recht \Sigma$,
\item an abelian group $L$ and, for all $i \in I$, a group homomorphism $\psi_i : \Sigma \recht L$,
\item a group homomorphism $\psi : H_2 \recht L$,
\item an action of the group of all permutations $\beta \in \Perm I$ by group automorphisms $\gamma_\beta$ of $\Sigma$,
\end{itemize}
such that the following conditions hold:
\begin{itemize}
\item the subgroups $\pi_{ij}(H_2)$ generate $\Sigma$,
\item the subgroups $\pi_i(H_1)$ are in a direct sum position inside $\Sigma$ and generate a subgroup of $\Sigma$ denoted by $\Sigma_1$,
\item we have $\pi_{ij}(a,b) = \pi_i(a) + \pi_j(b)$ for all $(a,b) \in H_1 \times H_1 \subset H_2$,
\item we have $\psi_i \circ \pi_{ij} = \psi = - \psi_j \circ \pi_{ij}$,
\item we have $\psi_k \circ \pi_{ij} = 0$ if $k \not\in \{i,j\}$,
\item we have $\Sigma_1 = \bigcap_{i \in I} \Ker \psi_i$,
\item for every $\beta \in \Perm I$, we have $\gamma_\beta \circ \pi_i = \pi_{\beta(i)}$ and $\gamma_\beta \circ \pi_{ij} = \pi_{\beta(i) , \beta(j)}$.
\end{itemize}
Then there exist a countable abelian group $H$ with subgroup $H_0 < H$ and group isomorphisms
$$\delta_1 : H_0 \recht H_1 \quad , \quad \delta_2 :  (H \times H)_{H_0} \recht H_2 \quad\text{and}\quad \delta : p_H^{-1}(H_0) \recht \Sigma$$
such that, using the notations $\mu_i$ and $\mu_{ij}$ introduced before the lemma, we have
\begin{itemize}
\item $\delta$ conjugates the actions of $\Perm I$,
\item $\delta \circ \mu_i = \pi_i \circ \delta_1$,
\item $\delta \circ \mu_{ij} = \pi_{ij} \circ \delta_2$.
\end{itemize}
\end{lemma}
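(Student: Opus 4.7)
The plan is to extract the abelian group $H$ with subgroup $H_0$ from the data and then assemble the required isomorphisms $\delta_1,\delta_2,\delta$. I would first establish two elementary consequences of the hypotheses. From $\psi_k\circ\pi_{ij}=0$ for $k\notin\{i,j\}$, restricting to $H_1\times 0\subset H_2$ and using that $I$ is infinite gives $\psi_k\circ\pi_i=0$ whenever $k\ne i$; plugging this into $\psi_i\circ\pi_{ij}=-\psi_j\circ\pi_{ij}$ forces $\psi_i\circ\pi_i=0$ as well. Setting $L_0:=\psi(H_2)\subset L$, this yields a well-defined group homomorphism $\Psi:\Sigma\to L_0^{(I)}$, $x\mapsto(\psi_i(x))_i$, whose kernel is $\Sigma_1$ and whose image equals $\{(l_i)\in L_0^{(I)}:\sum_i l_i=0\}$, since $\Psi(\pi_{ij}(h))=\psi(h)(e_i-e_j)$. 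A crucial intermediate step is to show $\ker\psi=H_1\times H_1$ in $H_2$: if $\psi(h)=0$ then $\pi_{ij}(h)\in\Sigma_1$, so writing $\pi_{ij}(h)=\sum_k\pi_k(a_k)$ and applying $\gamma_\beta$ for a permutation $\beta$ fixing $\{i,j\}$ pointwise but freely moving the rest of the finite support forces $a_k=0$ for $k\notin\{i,j\}$; injectivity of $\pi_{ij}$ then gives $h=(a_i,a_j)\in H_1\times H_1$.

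Next I would construct the canonical involution $\sigma:H_2\to H_2$ that encodes coordinate swap. A similar permutation argument first yields $\pi_{ij}(H_2)=\pi_{ji}(H_2)$, so $\sigma:=\pi_{ij}^{-1}\circ\pi_{ji}$ is a well-defined automorphism of $H_2$. Using the transposition equivariance $\pi_{ji}=\gamma_{(ij)}\circ\pi_{ij}$ one rewrites $\sigma=\pi_{ij}^{-1}\gamma_{(ij)}\pi_{ij}$, whence $\sigma^2=\id$; a further conjugation shows $\sigma$ is independent of $(i,j)$. On $H_1\times H_1$ it restricts to the swap $(a,b)\mapsto(b,a)$, while on the quotient $H_2/(H_1\times H_1)\cong L_0$ it induces $-\id$, as $\psi_i\circ\pi_{ji}=-\psi$.

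Now set $H:=H_2/(0\times H_1)$ with quotient map $p:H_2\to H$, and $H_0:=p(H_1\times 0)$. Since $(H_1\times 0)\cap(0\times H_1)=0$, the map $H_1\to H_0$, $a\mapsto p(a,0)$, is an isomorphism whose inverse is the desired $\delta_1$, and $H/H_0\cong H_2/(H_1\times H_1)\cong L_0$. The map $\Phi:H_2\to H\times H$, $\Phi(h):=(p(h),p(\sigma(h)))$, lands in $(H\times H)_{H_0}$ since $h+\sigma(h)\in\ker\psi=H_1\times H_1$ projects into $H_0$. Injectivity is immediate (if $h=(0,b)\in 0\times H_1$ then $\sigma(h)=(b,0)\in 0\times H_1$ only when $b=0$), and surjectivity follows by lifting $u\in H$ to some $h_0\in H_2$ and then adjusting $h_0$ by an element of $0\times H_1$ (whose image under $p\circ\sigma$ sweeps out $H_0$) to enforce $p(\sigma(h_0))=v$. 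Take $\delta_2:=\Phi^{-1}$.

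Finally, to construct $\delta:p_H^{-1}(H_0)\to\Sigma$, I would fix a basepoint $i_0\in I$. Reading off coordinates shows
\[
p_H^{-1}(H_0)=\mu_{i_0}(H_0)\oplus\bigoplus_{i\ne i_0}\mu_{i_0,i}\bigl(\{(-y,y):y\in H\}\bigr),
\]
and one defines $\delta$ by $\delta(\mu_{i_0}(h)):=\pi_{i_0}(\delta_1(h))$ and $\delta(\mu_{i_0,i}(-y,y)):=\pi_{i_0,i}(\delta_2(-y,y))$, extended additively. The parallel decomposition $\Sigma=\pi_{i_0}(H_1)\oplus\bigoplus_{i\ne i_0}\pi_{i_0,i}(\delta_2(\{(-y,y):y\in H\}))$, verified via $\Psi$ and the permutation trick of the first step, shows that $\delta$ is a group isomorphism. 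The compatibilities $\delta\circ\mu_i=\pi_i\circ\delta_1$ for all $i\in I$ and $\delta\circ\mu_{i_0,j}=\pi_{i_0,j}\circ\delta_2$ for all $j\ne i_0$ follow by direct computation in the decomposition. The main obstacle then is to deduce the remaining compatibilities $\delta\circ\mu_{ij}=\pi_{ij}\circ\delta_2$ for $i,j\ne i_0$ together with full $\Perm(I)$-equivariance. Splitting $(y_1,y_2)=(y_1+y_2,0)+(-y_2,y_2)$ in $(H\times H)_{H_0}$ and invoking the already-established cases reduces the verification to a single residual identity whose error lies in $\Sigma_1\cap(\pi_{i_0}(H_1)+\pi_i(H_1)+\pi_j(H_1))$; the $\Psi$-image of the error vanishes by the first step, it vanishes on $H_0$-valued inputs by direct calculation, and the remaining vanishing in general is obtained by combining the constraints coming from $\gamma_\beta$ applied with $\beta\in\{(i_0\ i),(i_0\ j),(i\ j)\}$, which jointly trivialize the residue.
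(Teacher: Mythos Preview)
Your approach is genuinely different from the paper's and mostly works: you define $H$ as the quotient $H_2/(0\times H_1)$, whereas the paper defines it as the \emph{subgroup} $\Ker\rho\subset H_2$, where $\rho:H_2\to H_1$ is extracted from the identity $\pi_{12}(x)+\pi_{23}(x)+\pi_{31}(x)=\pi_1(\rho(x))+\pi_2(\rho(x))+\pi_3(\rho(x))$. Your construction of the involution $\sigma$ and of $\delta_2=\Phi^{-1}$ is correct, as is your basepoint decomposition of both $p_H^{-1}(H_0)$ and $\Sigma$, so $\delta$ is a well-defined isomorphism and the compatibilities $\delta\circ\mu_i=\pi_i\circ\delta_1$ and $\delta\circ\mu_{i_0,j}=\pi_{i_0,j}\circ\delta_2$ do hold.

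The gap is in your final step. Writing $h=\delta_2(-y,y)$, one has $\sigma(h)=-h$ (this is exactly your ``$\sigma$ induces $-\id$ on the quotient'' together with $p(h)+p(\sigma(h))=0$), and the residue for $\delta\circ\mu_{ij}=\pi_{ij}\circ\delta_2$ with $i,j\neq i_0$ reduces to the cocycle defect
\[
\pi_{i,i_0}(h)+\pi_{i_0,j}(h)-\pi_{i,j}(h)\;=\;\pi_{i_0}(r)+\pi_i(r)+\pi_j(r),
\]
where $r=\rho(h)$ in the paper's notation. This element is \emph{symmetric} in the three indices $i_0,i,j$, so the transpositions $(i_0\,i),(i_0\,j),(i\,j)$ fix it and give no constraint whatsoever; your claim that they ``jointly trivialize the residue'' is false. (A direct check with the 3-cycle only yields $2r=0$.) What is actually needed is a \emph{four}-index argument: introducing a fourth index $k$ and comparing the two basepoint decompositions of $\mu_{kj}(-y,y)$, or equivalently computing $\pi_{12}(h)+\pi_{23}(h)+\pi_{34}(h)+\pi_{41}(h)$ in two ways, forces an identity of the form $\pi_{i_0}(r)+\pi_k(r)=\pi_i(r)+\pi_j(r)$ with $i_0,i,j,k$ distinct, whence $r=0$ by the direct-sum property. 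This is exactly the content of the paper's four-index computation (showing $\eta|_{\Ker\rho}=0$), which is the one genuinely nontrivial step in the lemma and which your proof is missing.
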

\begin{proof}
We may assume that $I = \N$. Since the subgroups $\pi_i(H_1) < \Sigma$ are in a direct sum position, we can assemble the $\pi_i$ into an isomorphism $\pi : H_1^{(\N)} \recht \Sigma_1$. Note that $\pi$ conjugates the natural actions of $\Perm \N$.

Fix $x \in H_2$. Observe that $y := \pi_{12}(x) + \pi_{23}(x) + \pi_{31}(x)$ belongs to the kernel of all $\psi_i$, $i \in \N$. Hence, $y = \pi(z)$ for some element $z \in H_1^{(\N)}$. It follows that $z$ is invariant under cyclic permutations of $(1,2,3)$. It also follows that $z$ is invariant under all permutations that fix $1$, $2$ and $3$. Since there are only finitely many $k \in \N$ with $z_k \neq 0$, we conclude that $y$ must be of the form $y = \pi_1(\rho(x)) + \pi_2(\rho(x)) + \pi_3(\rho(x))$, where $\rho : H_2 \recht H_1$ is a group homomorphism. Also note that $\rho(a,b) = a+b$ for all $(a,b) \in H_1 \times H_1 \subset H_2$.

We define $H := \Ker \rho$. We define the subgroup $H_0 < H$ given by $H_0 := \{(a,-a) \mid a \in H_1\}$. We denote $\delta_1 : H_0 \recht H_1 : \delta_1(a,-a) := a$.

By construction, we have that $\pi_{12}(x) + \pi_{23}(x) + \pi_{31}(x) = 0$ for all $x \in H$. Applying $\gamma_\beta$ for an arbitrary permutation $\beta$ of $\N$, it follows that
\begin{equation}\label{eq.starstar}
\pi_{ij}(x) + \pi_{jk}(x) + \pi_{ki}(x) = 0
\end{equation}
for all $x \in H$ and all distinct $i,j,k \in \N$.

Fix $x \in H_2$. Observe that $y := \pi_{12}(x) + \pi_{21}(x)$ belongs to the kernel of all $\psi_i$, $i \in \N$. We also have that $\gamma_\beta(y) = y$ when $\beta$ is the permutation of $\N$ that flips $1$ and $2$, as well as when $\beta$ is a permutation that fixes $1$ and $2$. Reasoning as above, it follows that $\pi_{12}(x) + \pi_{21}(x) = - \pi_1(\eta(x)) - \pi_2(\eta(x))$, where $\eta : H_2 \recht H_1$ is a group homomorphism. We only introduced the minus sign to make the following computation easier. Applying $\gamma_\beta$ for an arbitrary permutation $\beta$ of $\N$, we get that
$$\pi_{ji}(x) = -\pi_{ij}(x) + \pi_i(\eta(x)) + \pi_j(\eta(x))$$
for all $x \in H_2$ and all distinct $i,j \in \N$.

We prove that $\eta(x) = 0$ for all $x \in H$. Fix $x \in H$ and consider the element
$$y := \pi_{12}(x) + \pi_{23}(x) + \pi_{34}(x) + \pi_{41}(x) \; .$$
A first computation, using \eqref{eq.starstar}, yields
\begin{align*}
y &= -\pi_{31}(x) + \pi_{34}(x) + \pi_{41}(x) = \pi_1(\eta(x)) + \pi_3(\eta(x)) + \pi_{13}(x) + \pi_{34}(x) + \pi_{41}(x) \\
&= \pi_1(\eta(x)) + \pi_3(\eta(x)) \; .
\end{align*}
An analogous second computation gives
\begin{align*}
y &= \pi_{12}(x) - \pi_{42}(x) + \pi_{41}(x) = \pi_2(\eta(x)) + \pi_4(\eta(x)) + \pi_{12}(x) + \pi_{24}(x) + \pi_{41}(x) \\
&= \pi_2(\eta(x)) + \pi_4(\eta(x)) \; .
\end{align*}
Since the groups $\pi_i(H_1)$ are in a direct sum position inside $\Sigma$, both computations together imply that $\eta(x) = 0$ for all $x \in H$. It follows that
$\pi_{ij}(x) = - \pi_{ji}(x)$ for all $x \in H$ and all distinct $i,j \in \N$. In combination with \eqref{eq.starstar}, we get that
\begin{equation}\label{eq.crucial}
\pi_{ij}(x) + \pi_{jk}(x) = \pi_{ik}(x)
\end{equation}
for all $x \in H$ and all distinct $i,j,k \in \N$.

We claim that the homomorphism
$$\delta_2 : (H \times H)_{H_0} \recht H_2 : \delta_2(x,y) = x + (0,\delta_1(x+y))$$
is an isomorphism of groups satisfying $\delta_2(x,y) = (\delta_1(x),\delta_1(y))$ for all $(x,y) \in H_0 \times H_0$. This last formula is immediate. It already implies that the image of $\delta_2$ contains both $H$ and $H_1 \times H_1$. Since for every $x \in H_2$, we have that $x - (0,\rho(x)) \in H$, the surjectivity of $\delta_2$ follows. Since $\rho(\delta_2(x,y)) = \delta_1(x+y)$, the injectivity of $\delta_2$ follows as well.

Using \eqref{eq.crucial}, it follows that the formula
$$\delta : p_H^{-1}(H_0) \recht \Sigma : \delta(x) = \pi_{n+1}(\delta_1(p_H(x))) + \sum_{i=1}^n \pi_{i,n+1}(x_i) \quad\text{whenever}\;\; x_k = 0 \;\;\text{for all}\;\; k > n$$
is independent of the choice of $n$ and hence a well-defined homomorphism satisfying $\delta \circ \mu_{ij} = \pi_{ij} \circ \delta_2$ and $\delta \circ \mu_i = \pi_i \circ \delta_1$. It immediately follows that $\delta$ conjugates the respective actions of $\Perm \N$ and that $\delta$ is surjective.

To prove the injectivity of $\delta$, we first claim that $H_0 = H \cap \Ker \psi$. The inclusion $\subset$ is obvious. Conversely, assume that $y \in H$ and $\psi(y) = 0$. Put $z = \pi_{12}(y)$. We get that $z \in \Ker \psi_k$ for all $k \in \N$. So $z \in \Sigma_1$. Since $\gamma_\beta(z) = z$ for every permutation $\beta$ that fixes $1$ and $2$, we find that $y \in H_1 \times H_1$. Since $y \in H$, we obtain the claim that $y \in H_0$. If now $\delta(x) = 0$, we get that $\psi(x_i) = \psi_i(\delta(x)) = 0$ for all $i \in \N$. So $x$ belongs to $H_0^{(I)}$. Since $\delta \circ \mu_i = \pi_i \circ \delta_1$, the restriction of $\delta$ to $H_0^{(I)}$ is injective.
\end{proof}

\subsection*{Proofs of Theorem \ref{thm.main-intro} and Remark \ref{rem.funny-case}}

\begin{proof}[{\bf Proof of Theorem \ref{thm.main-intro}}]
A hyperbolic group $\Gamma$ has only finitely many conjugacy classes of finite subgroups (see e.g.\ \cite{Br99}). By Selberg's lemma \cite{Se60}, a finitely generated linear group $\Gamma$ (over a field of characteristic zero) has a finite index subgroup that is torsion-free. In both cases, $\Gamma$ admits a bound on the possible orders of its finite subgroups. By the work of \cite{CH88,Sk88,Oz03,Oz07} (see \cite[Lemma 2.4]{PV12} for a more detailed explanation), we also have in both cases that $\Gamma$ is weakly amenable and that $\Gamma$ belongs to class $\cS$.
So every group $\Gamma$ that appears in Theorem \ref{thm.main-intro} satisfies the conditions of Theorem \ref{thm.main}.

We will apply Theorem \ref{thm.main}. The conclusion of Theorem \ref{thm.main} describes the given $*$-isomorphism $\pi : L \Lambda \recht (L \cG_0)^r$ as a composition of an inner automorphism, ``group like'' isomorphisms implemented by group isomorphisms and characters, and the $*$-isomorphism $\pi_\theta$ that need not be group like in general. We now prove that in the situation of Theorem \ref{thm.main-intro}, also $\pi_\theta$ is group like.

1.\ Assume that $H = \Z/ n \Z$ with $n \in \{2,3\}$ and put $\cG = H^{(\Gamma)} \rtimes (\Gamma \times \Gamma)$. We apply Theorem \ref{thm.main} with $H_0 = H$. This provides an abelian group $H'$ with $|H'| = |H|$. So, $H' \cong H$ and we may assume that $H' = H$. It only remains to prove that the automorphism $\pi_\theta : L \cG \recht L \cG$ is group like. But since $L H$ has dimension $2$ or $3$, it is not hard to check that every automorphism $\theta : L H \recht L H$ is of the form $\theta = \al_\om \circ \pi_\delta$ for some character $\om \in \widehat{H}$ and group automorphism $\delta : H \recht H$. Then $\pi_\theta$ is group like as well.

2.\ We apply Theorem \ref{thm.main} with $H_0 = \{0\}$. Since $H' \cong H$, we may assume that $H' = H$. Then $\theta : \widehat{H} \recht \widehat{H}$ is a pmp isomorphism satisfying $\theta(k+\om) = k + \theta(\om)$ for a.e.\ $k,\om \in \widehat{H}$. So we find a fixed $\om_0 \in \widehat{H}$ such that $\theta(\om) = \om + \om_0$ for a.e.\ $\om \in \widehat{H}$. But then $\pi_\theta$ is the identity map.
\end{proof}

\begin{proof}[{\bf Proof of Remark \ref{rem.funny-case}}]
Assume that $\Gamma$ has no nontrivial characters. Put $G = \Gamma \times \Gamma$, $\cH_0 = p_H^{-1}(\{0\})$ and $\cG_0 = \cH_0 \rtimes G$. Put $K = \widehat{H}$. Since $G$ has no nontrivial characters, we only need to prove that $\cH_0$ has no nontrivial $G$-invariant characters. This means that we have to prove that the action of $G$ on the compact space $K^\Gamma / K$ only has $0$ as a fixed point. One checks that the $G$-fixed points in $K^\Gamma / K$ are precisely the points $(\al_g)_{g \in \Gamma} + K$ where $\al : \Gamma \recht K$ is a homomorphism. Since $\Gamma$ has no nontrivial characters and $K$ is abelian, such a homomorphism is constantly equal to $0$.
\end{proof}


\begin{thebibliography}{ABC90}\setlength{\itemsep}{-1mm} \setlength{\parsep}{0mm} \small

\bibitem[Br99]{Br99} {N. Brady}, Finite subgroups of hyperbolic groups. {\it Int. J. Algebra Comput.} {\bf 10} (2000), 399-406.

\bibitem[BO08]{BO08} {N.P. Brown and N. Ozawa}, C$^*$-algebras and finite-dimensional approximations. {\it Graduate Studies in Mathematics} {\bf 88}. American Mathematical Society, Providence, 2008.

\bibitem[Co76]{Co76} {A. Connes}, Classification of injective factors. {\it Ann. of Math. (2)} {\bf 104} (1976), 73-115.

\bibitem[Co80a]{Co80a} {A. Connes}, A factor of type II$_1$ with countable fundamental group. {\it J. Operator Theory} {\bf 4} (1980), 151-153.

\bibitem[Co80b]{Co80b} {A. Connes}, Classification des facteurs. In {\it Operator algebras and applications, Part 2 (Kingston, 1980)}, Proc. Sympos. Pure Math. {\bf 38}, Amer. Math. Soc., Providence, 1982, pp.\ 43-109.

\bibitem[CH88]{CH88} {M. Cowling and U. Haagerup}, Completely bounded multipliers of the Fourier algebra of a simple Lie group of real rank one. {\it Invent. Math.} {\bf 96} (1989), 507-549.

\bibitem[Ef73]{Ef73} {E.G. Effros}, Property $\Gamma$ and inner amenability. {\it Proc. Amer. Math. Soc.} {\bf 47} (1975), 483-486.

\bibitem[Io06]{Io06} {A. Ioana}, Rigidity results for wreath product II$_1$ factors. {\it J. Funct. Anal.} {\bf 252} (2007), 763-791.

\bibitem[Io10]{Io10} {A. Ioana}, W$^*$-superrigidity for Bernoulli actions of property (T) groups. {\it J. Amer. Math. Soc.} {\bf 24} (2011), 1175-1226.

\bibitem[Io12a]{Io12a} {A. Ioana}, Classiffication and rigidity for von Neumann algebras. In {\it Proceedings of the 6th European Congress of Mathematics (Krakow, 2012)}, European Mathematical Society Publishing House, to appear.

\bibitem[Io12b]{Io12b} {A. Ioana}, Cartan subalgebras of amalgamated free product II$_1$ factors. {\it Preprint.}\\ {\tt arXiv:1207.0054}

\bibitem[IPP05]{IPP05} {A. Ioana, J. Peterson and S. Popa}, Amalgamated free products of weakly rigid factors and calculation of their symmetry groups. {\it Acta Math.} {\bf 200} (2008), 85-153.

\bibitem[IPV10]{IPV10} {A. Ioana, S. Popa and S. Vaes}, A class of superrigid group von Neumann algebras. {\it Ann. Math.}, to appear. {\tt arXiv:1007.1412}

\bibitem[McD69]{McD69} {D. McDuff}, Central sequences and the hyperfinite factor. {\it Proc. London Math. Soc. (3)} {\bf 21} (1970), 443-461.

\bibitem[MP03]{MP03} {N. Monod and S. Popa}, On co-amenability for groups and von Neumann algebras. {\it C. R. Math. Acad. Sci. Soc. R. Can.} {\bf 25} (2003), 82-87.

\bibitem[MvN43]{MvN43} {F.J. Murray and J. von Neumann}, Rings of operators IV, {\it Ann. Math.} {\bf 44} (1943), 716-808.

\bibitem[Oz03]{Oz03} {N. Ozawa}, Solid von Neumann algebras. {\it Acta Math.} {\bf 192} (2004), 111-117.

\bibitem[Oz07]{Oz07} {N. Ozawa}, Weak amenability of hyperbolic groups. {\it Groups Geom. Dyn.} {\bf 2} (2008), 271-280.

\bibitem[OP07]{OP07} {N. Ozawa and S. Popa}, On a class of II$_1$ factors with at most one Cartan subalgebra. {\it Ann. Math.} {\bf 172} (2010), 713-749.

\bibitem[Po01]{Po01} {S. Popa}, On a class of type II$_1$ factors with Betti numbers invariants. \emph{Ann. of Math.} \textbf{163} (2006), 809-899.

\bibitem[Po03]{Po03} {S. Popa}, Strong rigidity of II$_1$ factors arising from malleable actions of $w$-rigid groups, I. \emph{Invent. Math.} \textbf{165} (2006), 369-408.

\bibitem[Po04]{Po04} {S. Popa}, Strong rigidity of II$_1$ factors arising from malleable actions of $w$-rigid groups, II. {\it Invent. Math.} {\bf 165} (2006), 409-452.

\bibitem[Po06a]{Po06a} {S. Popa}, Deformation and rigidity for group actions and von Neumann algebras. In {\it Proceedings of the International Congress of Mathematicians (Madrid, 2006)}, Vol.\ I, European Mathematical Society Publishing House, 2007, p.\ 445-477.

\bibitem[Po06b]{Po06b} {S. Popa}, On the superrigidity of malleable actions with spectral gap. {\it J. Amer. Math. Soc.} {\bf 21} (2008), 981-1000.

\bibitem[PV06]{PV06} {S. Popa and S. Vaes}, Strong rigidity of generalized Bernoulli actions and computations of their symmetry groups. {\it Adv. Math.} {\bf 217} (2008), 833-872.

\bibitem[PV11]{PV11} {S. Popa and S. Vaes}, Unique Cartan decomposition for II$_1$ factors arising from arbitrary actions of free groups. {\it Preprint.} {\tt arXiv:1111.6951}

\bibitem[PV12]{PV12} {S. Popa and S. Vaes}, Unique Cartan decomposition for II$_1$ factors arising from arbitrary actions of hyperbolic groups. {\it Preprint.} {\tt arXiv:1201.2824}

\bibitem[Sc63]{Sc63} {J. Schwartz}, Two finite, non-hyperfinite, non-isomorphic factors. {\it Comm. Pure Appl. Math.} {\bf 16} (1963), 19-26.

\bibitem[Se60]{Se60} {A. Selberg}, On discontinuous groups in higher-dimensional symmetric spaces. In {\it Contributions to function theory}, Tata Institute of Fundamental Research, Bombay, pp.\ 147-164.

\bibitem[Si10]{Si10} {T. Sinclair}, Strong solidity of group factors from lattices in $\SO(n,1)$ and $\SU(n,1)$. {\it J. Funct. Anal.} {\bf 260} (2011), 3209-3221.

\bibitem[Sk88]{Sk88} {G. Skandalis}, Une notion de nucl\'{e}arit\'{e} en K-th\'{e}orie (d'apr\`{e}s J. Cuntz). {\it K-Theory} {\bf 1} (1988), 549-573.

\bibitem[Va07]{Va07} {S. Vaes}, Explicit computations of all finite index bimodules for a family of II$_1$ factors. \emph{Ann. Sci. \'{E}cole Norm. Sup.} {\bf 41} (2008), 743-788.

\bibitem[Va09]{Va09} {S. Vaes}, An inner amenable group whose von Neumann algebra does not have property Gamma. {\it Acta Math.} {\bf 208} (2012), 389-394.

\bibitem[Va10a]{Va10a} {S. Vaes}, Rigidity for von Neumann algebras and their invariants. In {\it Proceedings of the International Congress of Mathematicians (Hyderabad, 2010)}, Vol.\ III, Hindustan Book Agency, 2010, p.\ 1624-1650.

\bibitem[Va10b]{Va10b} {S. Vaes}, One-cohomology and the uniqueness of the group measure space decomposition of a II$_1$ factor. {\it Math. Ann.}, to appear. {\tt arXiv:1012.5377}

\end{thebibliography}
\end{document}